\newtheorem{theorem}{Theorem}[section]
\newtheorem{lemma}[theorem]{Lemma}
\newtheorem{corollary}[theorem]{Corollary}
\newtheorem{proposition}[theorem]{Proposition}
\theoremstyle{definition}
\newtheorem{remark}[theorem]{Remark}
\newtheorem{definition}[theorem]{Definition}
\numberwithin{equation}{section}
\begin{document}

\title{\bf\Large New Characterizations and Properties of Matrix $A_\infty$
Weights\footnotetext{\hspace{-0.25cm}2020 {\it Mathematics Subject Classification}.
Primary 46E30; Secondary 47A56, 15A15, 42B35.\endgraf
{\it Key words and phrases:}
matrix weight,
Muckenhoupt condition,
reverse H\"older inequality,
self-improvement.\endgraf
This project is partially supported by the National Key Research
and Development Program of China (Grant No.\
2020YFA0712900), the National
Natural Science Foundation of China (Grant Nos. 12122102, 12371093,
and 12071197), the Fundamental Research Funds
for the Central Universities (Grant No.\ 2233300008),
and the Research Council of Finland (Grant No.\ 346314).}}
\date{}
\author{Fan Bu, Tuomas Hyt\"onen\footnote{Corresponding author,
E-mail: \texttt{tuomas.hytonen@helsinki.fi}/November 10,
2023/Final version.}, \
Dachun Yang and Wen Yuan}

\maketitle

\vspace{-0.8cm}

\begin{center}
\begin{minipage}{13cm}
{\small {\bf Abstract}\quad
We provide several new characterizations of $A_{p,\infty}$-matrix weights,
originally introduced by A. Volberg
as matrix-valued substitutes of the classical $A_\infty$ weights.
In analogy with the notion of $A_p$-dimension of matrix weights
introduced in our previous work, we introduce the concepts of
the lower and the upper dimensions of $A_{p,\infty}$-matrix weights,
which enable us to obtain sharp estimates related to their reducing operators.
In a follow-up work, these results will play a key role
in the study of function spaces with $A_{p,\infty}$-matrix weights,
which extends earlier results in the more restricted class of $A_p$-matrix weights.
}
\end{minipage}
\end{center}

\vspace{0.2cm}


\vspace{0.2cm}

\section{Introduction}

For a scalar-valued weight $w$,
it is well known that its membership in the Muckenhoupt class $A_p$
is the characterising condition for several inequalities
involving the boundedness of some operators (like the maximal operator \cite{Mu72},
the Hilbert transform \cite{HMW}, or more general singular integrals \cite{CF74})
in the weighted space $L^p(w)$.
On the other hand, the weaker condition $w\in A_\infty$
is often enough for various other estimates in these spaces
(like the comparison of singular integrals with the maximal operators \cite{CF74}).

The study of the space $L^2(W)$ with a matrix weight $W$ goes back to
Wiener and Masani \cite[\S 4]{wm58}.
Their work focused on the prediction theory for multivariate stochastic processes.
In order to address the problems related to
multivariate random stationary process and Toeplitz operators,
Treil and Volberg \cite{tv97} introduced $A_2$-matrix weights
and proved that the Hilbert transform is bounded on $L^2(W)$ if and only if $W\in A_2$.
Extensions to $L^p(W)$ with $W\in A_p$ for general $p\in(1,\infty)$
were later found by Nazarov and Treil \cite{nt96} and Volberg \cite{v97}.
For many other results on $A_p$-matrix weights and
$L^p(W)$, we refer the reader to \cite{b01,bc22,cg01,cim18,cimpr,cmr16,DKPS,dly21,g03,HPV,im19,ipr21,MRR22,nptv}.

In the same work already mentioned, Volberg \cite{v97} also introduced
an analogue of the $A_\infty$ condition for matrix weights.
To be precise, for matrix-valued weights, this condition splits into
a family of conditions $A_{p,\infty}$, $p\in(0,\infty)$, but they all reduce to
the classical $A_\infty$, when the weight is scalar-valued.
In analogy with results in the scalar case, it was observed in \cite{v97} that the equivalence of certain matrix-weighted Triebel--Lizorkin norms remains valid for this larger class of weights.

Our interest in $A_{p,\infty}$ weights is motivated by a more general class of similar applications in the theory of function spaces with matrix weights.
Following the development of matrix-weighted Besov spaces $\dot B^s_{p,q}(W)$ by
Frazier and Roudenko \cite{fr04,fr08,ro03,ro04} and Bu et al. \cite{byy},
and matrix-weighted Triebel--Lizorkin spaces $\dot F^s_{p,q}(W)$
by Frazier and Roudenko \cite{fr21} and Wang et al. \cite{wyz}
(after some results in the special case $\dot F^{0}_{p,2}(W)$
already in \cite{nt96,v97} and more recently in \cite{isr21}),
the present authors \cite{bhyy} recently introduced and investigated
a rather general class of Besov-type and Triebel--Lizorkin-type spaces with matrix weights.
However, the theory of \cite{bhyy} was developed for $A_p$-matrix weights only.
Extending these results to the broader generality of $A_{p,\infty}$-matrix weights
motivated our systematic study of this interesting class of weights in this article.
Applications to Besov-type and Triebel--Lizorkin-type spaces
with $A_{p,\infty}$-matrix weights will be considered in a follow-up work,
but the present results should also have an independent interest
and many other applications involving matrix-valued extensions of
classical results dealing with $A_\infty$ weights.

The organization of the remainder of this article is as follows.
In Section \ref{sec:background},
we recall some basic concepts and results from the theory of matrix weights.
In Section \ref{sec:matrixAinfty},
we introduce a new definition of the matrix $A_{p,\infty}$ class
and prove its equivalence with Volberg's definition,
together with several further characterisations.
In Section \ref{sec:self-improvement},
we explore a certain self-improvement property of the original definition
and obtain a matrix-valued substitute of
the classical identity $A_\infty=\bigcup_{q\in[1,\infty)}A_q$.
Section \ref{sec:RHI} is concerned with reverse H\"older inequalities,
which are another manifestation of self-improvement.
In Section \ref{sec:dimApInfty},
we introduce and study the concepts of the upper
and the lower dimensions of $A_{p,\infty}$ weights
and use them to estimate norms of the form $\|A_Q A_R^{-1}\|$,
where $A_Q$ and $A_R$ are the \emph{reducing operators}
related to $W\in A_{p,\infty}$ on different cubes $Q,R\subset\mathbb R^n$.
Such estimates for $A_p$-matrix weights played a key role
in the development of the theory of matrix-weighted Triebel--Lizorkin spaces
in \cite{bhyy,fr21}, and will be extensively used in our follow-up work,
where we extend this theory to the larger class of $A_{p,\infty}$-matrix weights.
In the final Section \ref{sec:examples},
we illustrate the new concepts by several examples,
which also show the sharpness of the results obtained in Section \ref{sec:dimApInfty}.

At the end of this section, we make some conventions on notation.
Through the whole article, we work on $\mathbb R^n$.
The \emph{ball} $B$ of $\mathbb{R}^n$,
centered at $x\in\mathbb{R}^n$ with radius $r\in(0,\infty)$,
is defined by setting
$$
B:=\{y\in\mathbb{R}^n:\ |x-y|<r\}=:B(x,r);
$$
moreover, for any $\lambda\in(0,\infty)$, $\lambda B:=B(x,\lambda r)$.
A \emph{cube} $Q$ of $\mathbb{R}^n$ always has finite edge length
and all edges of cubes are always assumed to be parallel to coordinate axes,
but $Q$ is not required to be open or closed.
For any cube $Q$ of $\mathbb{R}^n$,
let $c_Q$ be its center and $\ell(Q)$ its edge length.
For any $\lambda\in(0,\infty)$ and any cube $Q$ of $\mathbb{R}^n$,
let $\lambda Q$ be the cube with the same center of $Q$ and the edge length $\lambda\ell(Q)$.
For any $r\in\mathbb{R}$, $r_+$ is defined as $r_+:=\max\{0,r\}$
and $r_-$ is defined as $r_-:=\max\{0,-r\}$.
For any $t\in(0,\infty)$, $\log_+t:=\max\{0,\log t\}$.
For any $a,b\in\mathbb{R}$, $a\wedge b:=\min\{a,b\}$ and $a\vee b:=\max\{a,b\}$.
The symbol $C$ denotes a positive constant which is independent
of the main parameters involved, but may vary from line to line.
The symbol $A\lesssim B$ means that $A\leq CB$ for some positive constant $C$,
while $A\sim B$ means $A\lesssim B\lesssim A$.
Let $\mathbb N:=\{1,2,\ldots\}$, $\mathbb Z_+:=\mathbb N\cup\{0\}$, and $\mathbb Z_+^n:=(\mathbb Z_+)^n$.
We use $\mathbf{0}$ to denote the \emph{origin} of $\mathbb{R}^n$.
For any set $E\subset\mathbb{R}^n$,
we use $\mathbf 1_E$ to denote its \emph{characteristic function}.
For any $p\in(0,\infty]$, the \emph{Lebesgue space} $L^p(\mathbb{R}^n)$
has the usual meaning, and the \emph{local Lebesgue space}
$L^p_{\mathrm{loc}}(\mathbb{R}^n)$ is defined to be the set of
all measurable functions $f$ on $\mathbb{R}^n$ such that,
for any bounded measurable set $E$,
$$
\|f\|_{L^p(E)}:=\|f\mathbf{1}_E\|_{L^p(\mathbb{R}^n)}<\infty.
$$
In what follows, we denote $L^p(\mathbb{R}^n)$ and $L^p_{\mathrm{loc}}(\mathbb{R}^n)$
simply, respectively, by $L^p$ and $L^p_{\mathrm{loc}}$.
For any measurable function $w$ on $\mathbb{R}^n$
and any measurable set $E\subset\mathbb{R}^n$, let
$$
w(E):=\int_Ew(x)\,dx
$$
and, for any measurable set $E\subset\mathbb{R}^n$ with $|E|\in(0,\infty)$, let
$$
\fint_E w(x)\,dx:=\frac{1}{|E|}\int_E w(x)\,dx.
$$
The \emph{Hardy--Littlewood maximal operator} $\mathcal{M}$ is defined by setting,
for any $f\in L^1_{\mathrm{loc}}(\mathbb{R}^n)$ and $x\in\mathbb{R}^n$,
\begin{equation}\label{maximal}
\mathcal{M}(f)(x):=\sup_{\mathrm{ball}\,B\ni x}\fint_B|f(y)|\,dy.
\end{equation}
Also, when we prove a proposition (or the like),
in its proof we always use the same symbols as in
the statement itself of that proposition (or the like).

\section{Background on matrix weights}\label{sec:background}

In this section, we recall some basic concepts and results
from the theory of matrix weights.
Let us begin with some basic concepts on matrices.

For any $m,n\in\mathbb{N}$,
the set of all $m\times n$ complex-valued matrices is denoted by $M_{m,n}(\mathbb{C})$
and $M_{m,m}(\mathbb{C})$ is simply denoted by $M_{m}(\mathbb{C})$.
The zero matrix in $M_{m,n}(\mathbb{C})$ is denoted by $O_{m,n}$
and $O_{m,m}$ is simply denoted by $O_m$.
For any $A:=[a_{ij}]\in M_{m,n}(\mathbb{C})$,
the conjugate transpose of $A$ is denoted by $A^*$.
For any $A\in M_m(\mathbb{C})$, let
\begin{equation}\label{n1}
\|A\|:=\sup_{\vec z\in\mathbb{C}^m,\,|\vec z|=1}|A\vec z|.
\end{equation}

In what follows, we regard $\mathbb{C}^m$ as $M_{m,1}(\mathbb{C})$
and let $\vec{\mathbf{0}}:=(0,\ldots,0)^\mathrm{T}\in\mathbb{C}^m$.
The matrix $A\in M_m(\mathbb C)$ is called a \emph{Hermitian matrix} if $A^*=A$ and
is called a \emph{unitary matrix} if $A^*A=I_m$,
where $I_m$ is the identity matrix.
For any $A:=[a_{ij}]\in M_{m,n}(\mathbb{C})$
and $\vec z:=(z_1,\ldots,z_m)^T\in\mathbb{C}^m$,
\begin{align*}
[A\ \vec z]:=\begin{bmatrix}
a_{11}&\cdots&a_{1,n}&z_1\\
\vdots&\ddots&\vdots&\vdots\\
a_{m1}&\cdots&a_{m,n}&z_m
\end{bmatrix}.
\end{align*}

Now, we recall the concepts of positive definite matrices
and nonnegative definite matrices (see, for instance, \cite[(7.1.1a) and (7.1.1b)]{hj13}).

\begin{definition}
A matrix $A\in M_m(\mathbb{C})$ is said to be \emph{positive definite}
if, for any $\vec z\in\mathbb{C}^m\setminus\{\vec{\mathbf{0}}\}$, $\vec z^*A\vec z>0$,
and $A$ is said to be \emph{nonnegative definite} if,
for any $\vec z\in\mathbb{C}^m$, $\vec z^*A\vec z\geq0$.
\end{definition}

\begin{remark}\label{1001}
It is well known that any nonnegative definite matrix is always Hermitian
(see, for instance, \cite[Theorem 4.1.4]{hj13}).
\end{remark}

From Remark \ref{1001} and \cite[Theorem 5.6.2(d)]{hj13},
we immediately deduce the following conclusion; we omit the details.

\begin{lemma}\label{exchange}
Let $A,B\in M_m(\mathbb{C})$ be two nonnegative definite matrices.
Then $\|AB\|=\|BA\|$ with the same norm $\|\cdot\|$ as in \eqref{n1}.
\end{lemma}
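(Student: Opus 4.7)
The plan is to reduce the equality $\|AB\|=\|BA\|$ to the elementary fact that the operator norm of a matrix coincides with the operator norm of its adjoint. First I would apply Remark~\ref{1001} to conclude that $A$ and $B$, being nonnegative definite, are Hermitian; hence $A^*=A$ and $B^*=B$, and consequently $(AB)^*=B^*A^*=BA$. In other words, $AB$ and $BA$ are adjoints of one another.

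The remaining task is to verify that $\|M\|=\|M^*\|$ for every $M\in M_m(\mathbb{C})$, with $\|\cdot\|$ as in \eqref{n1}. This is precisely what the cited \cite[Theorem 5.6.2(d)]{hj13} provides. If I wanted a self-contained argument instead, I would combine the $C^*$-identity $\|M\|^2=\|M^*M\|$, which follows directly from \eqref{n1} by writing $|M\vec z|^2=\vec z^*M^*M\vec z$ and diagonalising the Hermitian matrix $M^*M$, with the standard observation that $M^*M$ and $MM^*$ share the same nonzero eigenvalues; since both are nonnegative definite, their operator norms coincide with their largest eigenvalues, and hence $\|M^*M\|=\|MM^*\|=\|M^*\|^2$.

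Combining these two steps yields $\|AB\|=\|(AB)^*\|=\|BA\|$. There is essentially no obstacle here: the nonnegative-definite hypothesis is used only through Remark~\ref{1001} to ensure Hermitian symmetry, and everything else is a routine application of standard spectral-theoretic facts about the operator norm, which is presumably why the authors choose to omit the details.
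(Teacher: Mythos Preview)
Your proposal is correct and follows exactly the approach indicated by the paper: invoke Remark~\ref{1001} to get $A^*=A$ and $B^*=B$, so that $(AB)^*=BA$, and then apply \cite[Theorem 5.6.2(d)]{hj13}, which is precisely the statement $\|M\|=\|M^*\|$. The paper omits the details, and you have supplied them faithfully.
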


Let $A\in M_m(\mathbb{C})$ be a positive definite matrix
and have eigenvalues $\{\lambda_i\}_{i=1}^m$.
Due to \cite[Theorem 2.5.6(c)]{hj13},
we find that there exists a unitary matrix $U\in M_m(\mathbb{C})$ such that
\begin{equation}\label{500}
A=U\operatorname{diag}\,(\lambda_1,\ldots,\lambda_m)U^*.
\end{equation}
Moreover, by \cite[Theorem 4.1.8]{hj13},
we find $\{\lambda_i\}_{i=1}^m\subset(0,\infty)$.
The following definition is based on these conclusions
and can be found in \cite[(6.2.1)]{hj94} (see also \cite[Definition 1.2]{h08}).

\begin{definition}
Let $A\in M_m(\mathbb{C})$ be a positive definite matrix
and have positive eigenvalues $\{\lambda_i\}_{i=1}^m$.
For any $\alpha\in\mathbb{R}$, define
$$
A^\alpha:=U\operatorname{diag}\left(\lambda_1^\alpha,\ldots,\lambda_m^\alpha\right)U^*,
$$
where $U$ is the same as in \eqref{500}.
\end{definition}

\begin{remark}
From \cite[p.\,408]{hj94}, we deduce that $A^\alpha$
is independent of the choices of the order of $\{\lambda_i\}_{i=1}^m$ and $U$,
and hence $A^\alpha$ is well defined.
\end{remark}

Now, we recall the concept of matrix weights (see, for instance, \cite{nt96,tv97,v97}).

\begin{definition}\label{MatrixWeight}
A matrix-valued function $W:\ \mathbb{R}^n\to M_m(\mathbb{C})$ is called
a \emph{matrix weight} if $W$ satisfies that
\begin{enumerate}[\rm(i)]
\item for any $x\in\mathbb{R}^n$, $W(x)$ is nonnegative definite;
\item for almost every $x\in\mathbb{R}^n$, $W(x)$ is invertible;
\item the entries of $W$ are all locally integrable.
\end{enumerate}
A \emph{scalar weight} is a matrix weight with $m=1$.
\end{definition}

Next, we recall the concept of reducing operators (see, for instance, \cite[(3.1)]{v97}).

\begin{definition}\label{reduce}
Let $p\in(0,\infty)$, $W$ be a matrix weight,
and $E\subset\mathbb{R}^n$ a bounded measurable set satisfying $|E|\in(0,\infty)$.
The matrix $A_E\in M_m(\mathbb{C})$ is called a \emph{reducing operator} of order $p$ for $W$
if $A_E$ is positive definite and,
for any $\vec z\in\mathbb{C}^m$,
\begin{equation}\label{equ_reduce}
\left|A_E\vec z\right|
\sim\left[\fint_E\left|W^{\frac{1}{p}}(x)\vec z\right|^p\,dx\right]^{\frac{1}{p}},
\end{equation}
where the positive equivalence constants depend only on $m$ and $p$.
\end{definition}

\begin{remark}
In Definition \ref{reduce}, the existence of $A_E$ is guaranteed by
\cite[Proposition 1.2]{g03} and \cite[p.\,1237]{fr04}; we omit the details.
\end{remark}

The following lemma is just \cite[Lemma 2.11]{bhyy}.

\begin{lemma}\label{reduceM}
Let $p\in(0,\infty)$, $W$ be a matrix weight,
and $E\subset\mathbb{R}^n$ a bounded measurable set satisfying $|E|\in(0,\infty)$.
If $A_E$ is a reducing operator of order $p$ for $W$,
then, for any matrix $M\in M_m(\mathbb{C})$,
\begin{align*}
\|A_EM\|\sim\left[\fint_E\left\|W^{\frac{1}{p}}(x)M\right\|^p\,dx\right]^{\frac{1}{p}},
\end{align*}
where the positive equivalence constants depend only on $m$ and $p$.
\end{lemma}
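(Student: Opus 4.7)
The plan is to reduce the matrix equivalence to the vector equivalence provided by Definition~\ref{reduce}, proving the two inequalities separately.

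For the direction $\|A_E M\|\lesssim[\fint_E\|W^{1/p}(x) M\|^p\,dx]^{1/p}$, I would write $\|A_E M\|=\sup_{|\vec z|=1}|A_E M\vec z|$, apply \eqref{equ_reduce} with $M\vec z$ in place of $\vec z$ for each unit vector $\vec z$, and then use the trivial pointwise bound $|W^{1/p}(x) M\vec z|\leq\|W^{1/p}(x) M\|$ under the integral. Taking the supremum over $\vec z$ on the left yields the desired inequality. This direction is routine.

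The reverse bound $[\fint_E\|W^{1/p}(x) M\|^p\,dx]^{1/p}\lesssim\|A_E M\|$ is the main step, and the hard part will be handling the fact that $\|W^{1/p}(x) M\|$ is itself a supremum over the unit sphere in $\mathbb C^m$ whose maximiser depends on $x$, so \eqref{equ_reduce} cannot be invoked for a single fixed $\vec z$. I would overcome this by dominating the operator norm by a finite sum of quantities evaluated at a \emph{fixed} basis, via the inequality $\|N\|^p\leq m^{p/2}\sum_{i=1}^m|Ne_i|^p$, valid for every $p\in(0,\infty)$ and following from the Frobenius bound $\|N\|^2\leq\sum_i|Ne_i|^2\leq m\max_i|Ne_i|^2$. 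Applying this pointwise with $N=W^{1/p}(x) M$, integrating, and then invoking \eqref{equ_reduce} once for each of the fixed vectors $Me_i$ in place of $\vec z$, I would reduce the right-hand side to a finite sum of terms $|A_E Me_i|^p\leq\|A_E M\|^p$; taking $p$-th roots will then close the argument, with equivalence constants depending only on $m$ and $p$.

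The only substantive ingredient beyond Definition~\ref{reduce} is therefore the Frobenius-type bound that converts the $x$-dependent operator-norm supremum into a finite sum over a fixed basis; once this conversion is in place, the estimate follows by direct application of the reducing-operator property.
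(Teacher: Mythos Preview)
Your argument is correct. The paper does not give its own proof of this lemma but simply cites \cite[Lemma 2.11]{bhyy}; your proposal supplies the details in a standard way, reducing the matrix norm to vector norms via the Frobenius bound $\|N\|\leq\sqrt{m}\max_i|N\vec e_i|$ and then applying \eqref{equ_reduce} to each fixed column vector $M\vec e_i$. Both directions are handled correctly, with constants depending only on $m$ and $p$ as required.
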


Corresponding to the classical weight class $A_p(\mathbb R^n)$
(see, for instance, \cite[Definitions 7.1.1 and 7.1.3]{g14c}),
we have the following concept of $A_p$-matrix weights
(see, for instance, \cite[p.\,490]{fr21}).

\begin{definition}\label{def ap}
Let $p\in(0,\infty)$. A matrix weight $W$ on $\mathbb{R}^n$
is called an $A_p(\mathbb{R}^n,\mathbb{C}^m)$-\emph{matrix weight}
if $W$ satisfies that, when $p\in(0,1]$,
\begin{align*}
[W]_{A_p(\mathbb{R}^n,\mathbb{C}^m)}
:=\sup_{\mathrm{cube}\,Q}\mathop{\mathrm{\,ess\,sup\,}}_{y\in Q}
\fint_Q\left\|W^{\frac{1}{p}}(x)W^{-\frac{1}{p}}(y)\right\|^p\,dx
<\infty
\end{align*}
or that, when $p\in(1,\infty)$,
\begin{align*}
[W]_{A_p(\mathbb{R}^n,\mathbb{C}^m)}
:=\sup_{\mathrm{cube}\,Q}
\fint_Q\left[\fint_Q\left\|W^{\frac{1}{p}}(x)W^{-\frac{1}{p}}(y)\right\|^{p'}
\,dy\right]^{\frac{p}{p'}}\,dx
<\infty,
\end{align*}
where $\frac{1}{p}+\frac{1}{p'}=1$.
The $A_p(\mathbb{R}^n,\mathbb{C}^m)$-matrix weights
reduce to $A_p(\mathbb R^n)$-weights when $m=1$.
\end{definition}

If there is no confusion,
we will denote $A_p(\mathbb{R}^n,\mathbb{C}^m)$ simply by $A_p$.
The following lemma is contained in \cite[Lemma 2.15 and Corollary 2.16]{bhyy}.

\begin{lemma}\label{Ap dual}
Let $p\in(1,\infty)$, $\frac{1}{p}+\frac{1}{p'}=1$, and matrix weight $W\in A_p$.
Then $\widetilde W:=W^{-\frac{p'}{p}}\in A_{p'}$ and
\begin{align}\label{dual matrix weight}
[W]_{A_p}\sim\left[W^{-\frac{p'}{p}}\right]_{A_{p'}}^{\frac{p}{p'}},
\end{align}
where the positive equivalence constants depend only on $m$ and $p$.
Let $Q$ be a cube and $A_Q$ and $\widetilde A_Q$, respectively,
the reducing operators of order $p$ for $W$ and of order $p'$ for $\widetilde W$.
Then, for any $M\in M_m(\mathbb C)$,
\begin{equation}\label{AQ inv}
\left\|A_Q^{-1}M\right\|
\sim\left\|\widetilde A_QM\right\|
\sim\left[\fint_Q\left\|W^{-\frac{1}{p}}(x)M\right\|^{p'}\,dx\right]^{\frac{1}{p'}},
\end{equation}
where the positive equivalence constants
depend only on $m$, $p$, and $[W]_{A_p}$.
\end{lemma}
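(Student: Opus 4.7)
The plan is to reduce everything to the reducing--operator characterization
\[
[W]_{A_p}^{1/p}\sim\sup_Q\|A_Q\widetilde A_Q\|
\]
and its symmetric version for $\widetilde W$.

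To establish this, I would fix a cube $Q$ and unwind the $A_p$-definition inside out. First, Lemma \ref{exchange} applied to the positive definite matrices $W^{1/p}(x)$ and $W^{-1/p}(y)$ gives
\[
\|W^{\frac{1}{p}}(x)W^{-\frac{1}{p}}(y)\|=\|W^{-\frac{1}{p}}(y)W^{\frac{1}{p}}(x)\|.
\]
The inner average in $y$ then matches Lemma \ref{reduceM} applied to $\widetilde W$ at order $p'$ with $M=W^{1/p}(x)$ (using $\widetilde W^{1/p'}=W^{-1/p}$), producing an equivalence with $\|\widetilde A_Q W^{1/p}(x)\|$. Raising to the $p$-th power and invoking Lemma \ref{exchange} once more to swap $\widetilde A_Q$ and $W^{1/p}(x)$, the outer average in $x$ matches Lemma \ref{reduceM} applied to $W$ at order $p$ with $M=\widetilde A_Q$, producing $\|A_Q\widetilde A_Q\|^p$. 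Taking the supremum over $Q$ yields the characterization. Running the very same argument with $(W,p)$ replaced by $(\widetilde W,p')$---using $\widetilde{\widetilde W}=W$, $(p')'=p$, and $\|A_Q\widetilde A_Q\|=\|\widetilde A_Q A_Q\|$ by Lemma \ref{exchange}---gives $[\widetilde W]_{A_{p'}}\sim\sup_Q\|A_Q\widetilde A_Q\|^{p'}$, which simultaneously delivers $\widetilde W\in A_{p'}$ and \eqref{dual matrix weight}.

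Turning to \eqref{AQ inv}, the last equivalence is simply Lemma \ref{reduceM} applied to $\widetilde W$ at order $p'$. For the first equivalence, the bound $\|\widetilde A_Q M\|\lesssim\|A_Q^{-1}M\|$ is the easy direction: it follows from the factorization $\widetilde A_Q M=(\widetilde A_Q A_Q)(A_Q^{-1}M)$, submultiplicativity, Lemma \ref{exchange}, and the already-proved $\|A_Q\widetilde A_Q\|\lesssim[W]_{A_p}^{1/p}$.

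The reverse inequality $\|A_Q^{-1}M\|\lesssim\|\widetilde A_Q M\|$ is the main obstacle, since there is no parallel algebraic factorization exploiting the $A_p$ hypothesis. I would argue at the level of quadratic forms. First prove $|A_Q^{-1}\vec z|\lesssim|\widetilde A_Q\vec z|$ for every $\vec z\in\mathbb C^m$: using self-adjointness of $A_Q^{-1}$, write $|A_Q^{-1}\vec z|=\sup\{|\vec v\cdot\vec z|:|A_Q\vec v|=1\}$, and control the inner product by inserting $W^{1/p}(x)W^{-1/p}(x)=I$ and using self-adjointness of $W^{\pm 1/p}(x)$:
\[
|\vec v\cdot\vec z|=|(W^{-\frac{1}{p}}(x)\vec z)\cdot(W^{\frac{1}{p}}(x)\vec v)|\le|W^{-\frac{1}{p}}(x)\vec z|\,|W^{\frac{1}{p}}(x)\vec v|.
\]
Averaging over $Q$ and applying H\"older with exponents $(p',p)$, followed by \eqref{equ_reduce}, gives $|\vec v\cdot\vec z|\lesssim|\widetilde A_Q\vec z|\,|A_Q\vec v|\sim|\widetilde A_Q\vec z|$. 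The resulting vector bound is equivalent to the Loewner inequality $A_Q^{-2}\lesssim\widetilde A_Q^2$; sandwiching by $M^*$ and $M$ and passing to operator norms converts this into $\|A_Q^{-1}M\|^2=\|M^*A_Q^{-2}M\|\lesssim\|M^*\widetilde A_Q^2 M\|=\|\widetilde A_Q M\|^2$, as required.
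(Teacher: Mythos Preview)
The paper does not supply its own proof of this lemma; it is quoted verbatim from \cite[Lemma 2.15 and Corollary 2.16]{bhyy}. Your argument is correct and self-contained: the reduction to $[W]_{A_p}^{1/p}\sim\sup_Q\|A_Q\widetilde A_Q\|$ via two applications of Lemma~\ref{reduceM} interleaved with Lemma~\ref{exchange} is the standard route to \eqref{dual matrix weight}, and your duality/Loewner-ordering step handles the nontrivial direction $\|A_Q^{-1}M\|\lesssim\|\widetilde A_Q M\|$ of \eqref{AQ inv} cleanly. It is worth noting (and your proof makes this visible) that this last inequality holds with constants depending only on $m$ and $p$ and does not consume the $A_p$ hypothesis; only the easy direction $\|\widetilde A_Q M\|\lesssim\|A_Q^{-1}M\|$ does, via the bound $\|A_Q\widetilde A_Q\|\lesssim[W]_{A_p}^{1/p}$.
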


\section{Matrix $A_\infty$ weights}\label{sec:matrixAinfty}

Corresponding to $A_\infty(\mathbb R^n)$
(see, for instance, \cite[Definitions 7.3.1]{g14c}),
we introduce the concept of $A_{p,\infty}$-matrix weights as follows.
A priori, this is different from the definition given by Volberg \cite{v97},
but we will shortly prove that these definitions
(together with several further characterizations) are actually equivalent.

\begin{definition}\label{def ap,infty}
Let $p\in(0,\infty)$. A matrix weight $W$ on $\mathbb{R}^n$
is called an $A_{p,\infty}(\mathbb{R}^n,\mathbb{C}^m)$-\emph{matrix weight}
if $W$ satisfies that, for any cube $Q\subset\mathbb{R}^n$,
\begin{align*}
\log_+\left(\fint_Q\left\|W^{\frac{1}{p}}(x)W^{-\frac{1}{p}}(\cdot)\right\|^p\,dx\right)\in L^1(Q)
\end{align*}
and
\begin{align*}
[W]_{A_{p,\infty}}
&:=[W]_{A_{p,\infty}(\mathbb{R}^n,\mathbb{C}^m)}\\
&:=\sup_{\mathrm{cube}\,Q}\exp\left(\fint_Q\log\left(\fint_Q
\left\|W^{\frac{1}{p}}(x)W^{-\frac{1}{p}}(y)\right\|^p\,dx\right)\,dy\right)
<\infty.
\end{align*}
The $A_{p,\infty}(\mathbb{R}^n,\mathbb{C}^m)$-matrix weights
go back to $A_\infty(\mathbb R^n)$-weights when $m=1$.
\end{definition}

If there is no confusion,
we will denote $A_{p,\infty}(\mathbb{R}^n,\mathbb{C}^m)$ simply by $A_{p,\infty}$.
The following lemma gives the relations between scalar and matrix weights,
which follows immediately from their definitions; we omit the details.

\begin{lemma}\label{w vs W}
Let $p\in(0,\infty)$.
Let $w$ be a scalar weight and consider the matrix weight $W:=wI_m$,
where $I_m$ is the identity matrix. Then
\begin{enumerate}[\rm(i)]
\item $W\in A_p(\mathbb{R}^n,\mathbb{C}^m)$ if and only if $w\in A_{\max\{1,p\}}(\mathbb R^n)$;
\item $W\in A_{p,\infty}(\mathbb{R}^n,\mathbb{C}^m)$ if and only if $w\in A_{\infty}(\mathbb R^n)$.
\end{enumerate}
\end{lemma}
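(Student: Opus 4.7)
The plan is to reduce both equivalences to elementary scalar computations, exploiting that for $W := w I_m$ one has $W^{1/p} = w^{1/p} I_m$ and $W^{-1/p} = w^{-1/p} I_m$, so that
\[
\left\|W^{\frac{1}{p}}(x)W^{-\frac{1}{p}}(y)\right\|^p = \frac{w(x)}{w(y)}.
\]
With this identity, all matrix norms appearing in Definitions \ref{def ap} and \ref{def ap,infty} collapse to purely scalar quantities, and the asserted equivalences become direct comparisons of constants.

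For (i), I would substitute this identity into Definition \ref{def ap} and split by cases. When $p\in(0,1]$, the $A_p$-constant of $W$ becomes $\sup_Q \bigl(\mathop{\mathrm{ess\,sup}}_{y\in Q} w(y)^{-1}\bigr)\fint_Q w\,dx$, which is precisely the classical $A_1$-constant of $w$; since $A_{\max\{1,p\}} = A_1$ in this range, (i) follows. When $p\in(1,\infty)$, performing the inner $dy$-integral and then the outer $dx$-average yields $\sup_Q (\fint_Q w)(\fint_Q w^{-p'/p})^{p/p'}$, which is exactly the classical $A_p = A_{\max\{1,p\}}$ constant of $w$.

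For (ii), the same substitution gives $\fint_Q \|W^{1/p}(x)W^{-1/p}(y)\|^p\,dx = w(y)^{-1}\fint_Q w\,dx$, so
\[
[W]_{A_{p,\infty}} = \sup_{\mathrm{cube}\,Q} \left(\fint_Q w(x)\,dx\right)\exp\left(-\fint_Q \log w(y)\,dy\right),
\]
which is the exponential (Fujii--Wilson/Hru\v{s}\v{c}ev) form of the classical $A_\infty(\mathbb R^n)$-constant of $w$; its equivalence to the more traditional definition $A_\infty = \bigcup_{q\in[1,\infty)}A_q$ is well known. The accompanying local integrability hypothesis $\log_+\bigl(\fint_Q \|W^{1/p}(x)W^{-1/p}(\cdot)\|^p\,dx\bigr)\in L^1(Q)$ reduces, up to bounded additive terms, to $\log_+ w^{-1}\in L^1_{\mathrm{loc}}$, a standard companion of the $A_\infty$ condition.

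No genuine obstacle is expected: the argument is pure algebraic bookkeeping, powered by the triviality $\|cI_m\|=|c|$ for $c\in\mathbb{C}$. The only mild point of care is to match the two standard formulations of the $A_\infty$ condition in (ii), which is routine; this is presumably why the authors feel justified in omitting the details.
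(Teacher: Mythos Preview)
Your proposal is correct and matches what the paper has in mind: the authors simply write that the lemma ``follows immediately from their definitions; we omit the details,'' and your computation is exactly that omitted verification. One tiny terminological slip: the exponential form $\sup_Q(\fint_Q w)\exp(-\fint_Q\log w)$ is the Hru\v{s}\v{c}ev characterization of $A_\infty$, not the Fujii--Wilson one (which is the maximal-function quantity $[w]_{A_\infty}^*$ appearing in the paper's Lemma~\ref{2.19}); since the paper itself uses the exponential form as its working definition of $[w]_{A_\infty(\mathbb R^n)}$ (see the proof of Lemma~\ref{2.14}), no appeal to the equivalence of characterizations is even needed.
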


We recall a well-known example of scalar weights (see, for instance, \cite[Example 7.1.7]{g14c}).

\begin{lemma}\label{example Ap}
For any $a\in\mathbb{R}$ and $x\in\mathbb{R}^n$, define the scalar weight $w_a(x):=|x|^a$.
Then the following three statements hold.
\begin{enumerate}[{\rm(i)}]
\item $w_a\in A_\infty(\mathbb R^n)$ if and only if $a\in(-n,\infty)$;
\item $w_a\in A_1(\mathbb R^n)$ if and only if $a\in(-n,0]$;
\item For any $p\in(1,\infty)$,
$w_a\in A_p(\mathbb R^n)$ if and only if $a\in(-n,n(p-1))$.
\end{enumerate}
\end{lemma}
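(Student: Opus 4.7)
The plan is to reduce each of the three conditions to a single explicit computation of averages of $|x|^a$ on balls centered at the origin, which represent the worst case among all cubes. Since $w_a$ must be locally integrable just to qualify as a weight, the bound $a>-n$ is necessary throughout; the actual content of the lemma is therefore the upper bound on $a$ in (ii) and (iii), and the interpolation step for (i).

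First, by polar coordinates, for any $b>-n$ one has $\fint_{B(\mathbf{0},r)}|x|^b\,dx=\frac{n}{b+n}r^b$, while the integral diverges if $b\leq-n$. For an arbitrary cube $Q$ I split into two regimes. When $\operatorname{dist}(\mathbf{0},Q)\geq\ell(Q)$, the function $|x|^a$ varies on $Q$ by a bounded multiplicative factor, so both $\fint_Q w_a$ and $\fint_Q w_a^{-1/(p-1)}$ are comparable to $|c_Q|^a$ and $|c_Q|^{-a/(p-1)}$, respectively, and the $A_p$ product is uniformly bounded. When $\operatorname{dist}(\mathbf{0},Q)<\ell(Q)$, inclusions of the form $B(\mathbf{0},c\ell(Q))\subset NQ\subset B(\mathbf{0},C\ell(Q))$ for suitable constants $c,N,C$ reduce the $Q$-averages to concentric-ball averages up to harmless factors. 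With this reduction in place, (iii) follows from the identity $r^a\cdot (r^{-a/(p-1)})^{p-1}=1$ for sufficiency, while necessity of $a<n(p-1)$ follows from the divergence of $\fint_{B(\mathbf{0},r)}|x|^{-a/(p-1)}\,dx$ when $-a/(p-1)\leq-n$. Part (ii) is analogous, with the essential infimum of $|x|^a$ on $B(\mathbf{0},r)$, which equals $0$ if $a>0$ (forcing failure of $A_1$) and is comparable to $r^a$ if $a\leq 0$ (matching the average).

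For (i), I would invoke the classical scalar identity $A_\infty(\mathbb{R}^n)=\bigcup_{q\in[1,\infty)}A_q(\mathbb{R}^n)$: given $a>-n$, choose $q$ with $q-1>a/n$ and apply (ii) or (iii) to obtain $w_a\in A_q\subset A_\infty$; conversely, any $A_\infty$-weight is locally integrable, forcing $a>-n$. The only technical point worth flagging is the comparison between general cubes and concentric balls centered at the origin, but this is routine bookkeeping; the essential content is the one-variable radial computation, and no real analytic obstacle arises.
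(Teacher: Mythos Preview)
Your proposal is correct and follows the standard textbook argument. The paper does not actually give a proof of this lemma; it is stated as a well-known example with a reference to \cite[Example 7.1.7]{g14c}, and the approach you outline---reduction to concentric balls via the two regimes $\operatorname{dist}(\mathbf{0},Q)\gtrless\ell(Q)$, explicit radial computation, and the identity $A_\infty=\bigcup_{q\geq 1}A_q$ for part (i)---is precisely the argument one finds there.
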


\begin{remark}
Lemma \ref{Ap dual} does not hold for $A_{p,\infty}$.
Indeed, let $p\in(1,\infty)$ and $\frac{1}{p}+\frac{1}{p'}=1$.
By Lemma \ref{example Ap}(i), we conclude for scalar weights that
\begin{align*}
w(\cdot):=|\cdot|^{n\frac{p}{p'}}\in A_\infty(\mathbb R^n)
\text{ but } w^{-\frac{p'}{p}}(\cdot)=|\cdot|^{-n}\notin A_\infty(\mathbb R^n).
\end{align*}
Combined with Lemma \ref{w vs W}, this implies for matrix weights that
\begin{align*}
W:=wI_m\in A_{p,\infty}(\mathbb{R}^n,\mathbb{C}^m)
\text{ but } W^{-\frac{p'}{p}}=w^{-\frac{p'}{p}}I_m
\notin A_{p',\infty}(\mathbb{R}^n,\mathbb{C}^m).
\end{align*}
This defect leads to an essential difficulty that
$\|A_Q^{-1}M\|$ does not have a good enough equivalent form like \eqref{AQ inv}
and hence a series of related estimates no longer hold.
\end{remark}

\begin{lemma}\label{reduceM inv}
Let $p\in(0,\infty)$, $W\in A_{p,\infty}$,
and $M\in M_m(\mathbb C)$ be nonzero.
Then, for any cube $Q\subset\mathbb{R}^n$,
\begin{align}\label{reduceM inv equ1}
\log\left[\left\|W^{-\frac{1}{p}}(\cdot)M\right\|^p\right]\in L^1(Q)
\end{align}
and, for any cube $Q\subset\mathbb{R}^n$,
\begin{align}\label{reduceM inv equ2}
\left\|A_Q^{-1}M\right\|^p
\sim\exp\left\{\fint_Q\log\left[\left\|W^{-\frac{1}{p}}(x)M\right\|^p\right]\,dx\right\},
\end{align}
where the positive equivalence constants
depend only on $m$, $p$, and $[W]_{A_{p,\infty}}$.
\end{lemma}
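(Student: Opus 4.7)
The plan is to sandwich $\|W^{-1/p}(x)M\|$ between two quantities proportional to $\|A_Q^{-1}M\|$ and then average a logarithm over $Q$. Writing $M=W^{1/p}(x)W^{-1/p}(x)M$ (for a.e.\ $x\in Q$) and $M=A_QA_Q^{-1}M$, and commuting factors of positive definite matrices inside the operator norm via Lemma \ref{exchange}, I would derive the pointwise sandwich
\begin{equation*}
\left\|A_Q^{-1}W^{\frac{1}{p}}(x)\right\|^{-1}\left\|A_Q^{-1}M\right\|\le\left\|W^{-\frac{1}{p}}(x)M\right\|\le\left\|A_QW^{-\frac{1}{p}}(x)\right\|\left\|A_Q^{-1}M\right\|.
\end{equation*}

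The two cross-norms at the extremes are then controlled by separate mechanisms. Specializing Lemma \ref{reduceM} to $M=A_Q^{-1}$, together with Lemma \ref{exchange}, produces the universal estimate $\fint_Q\|A_Q^{-1}W^{1/p}(x)\|^p\,dx\sim\|A_QA_Q^{-1}\|^p=1$ with constants depending only on $m,p$; Jensen's inequality then gives $\fint_Q\log\|A_Q^{-1}W^{1/p}(x)\|^p\,dx\le C(m,p)$. Specializing the same lemma to $M=W^{-1/p}(y)$ yields $\|A_QW^{-1/p}(y)\|^p\sim\fint_Q\|W^{1/p}(x)W^{-1/p}(y)\|^p\,dx$; taking logarithms, averaging over $y\in Q$, and invoking Definition \ref{def ap,infty} directly gives $\fint_Q\log\|A_QW^{-1/p}(y)\|^p\,dy\le\log[W]_{A_{p,\infty}}+C(m,p)$. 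The identity $I_m=(A_Q^{-1}W^{1/p}(x))(W^{-1/p}(x)A_Q)$, combined with submultiplicativity and Lemma \ref{exchange}, also yields the pointwise comparison $\|A_Q^{-1}W^{1/p}(x)\|^{-1}\le\|A_QW^{-1/p}(x)\|$; together with the $\log_+$-integrability clause of Definition \ref{def ap,infty}, this ensures that both $\log\|A_QW^{-1/p}(\cdot)\|^p$ and $\log\|A_Q^{-1}W^{1/p}(\cdot)\|^p$ belong to $L^1(Q)$. Substituting into the sandwich (with $M\neq O_m$ guaranteeing strict positivity throughout) then delivers \eqref{reduceM inv equ1}.

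For \eqref{reduceM inv equ2}, I would take the logarithm of the $p$-th power of each half of the sandwich and average over $x\in Q$. The upper half produces
\begin{equation*}
\fint_Q\log\left\|W^{-\frac{1}{p}}(x)M\right\|^p\,dx\le\fint_Q\log\left\|A_QW^{-\frac{1}{p}}(x)\right\|^p\,dx+\log\left\|A_Q^{-1}M\right\|^p,
\end{equation*}
which combined with the $A_{p,\infty}$-dependent cross-norm estimate above yields $\|A_Q^{-1}M\|^p\gtrsim\exp\{\fint_Q\log\|W^{-1/p}(x)M\|^p\,dx\}$; the lower half, handled symmetrically using the universal estimate, furnishes the matching reverse bound with a constant depending only on $m,p$. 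The main obstacle is precisely the asymmetry between the two cross-norms: $\|A_Q^{-1}W^{1/p}(\cdot)\|$ enjoys a universal $L^p$-average (via Lemma \ref{reduceM} applied to the fixed matrix $A_Q^{-1}$), whereas $\|A_QW^{-1/p}(\cdot)\|$ can only be controlled after taking a logarithm and invoking the $A_{p,\infty}$-hypothesis, because, as the remark preceding this lemma shows, the putative dual weight $W^{-p'/p}$ need not belong to $A_{p',\infty}$, so no \eqref{AQ inv}-type identity is available for $\|A_Q^{-1}M\|$ itself. Keeping these two roles disentangled is what produces the correct $[W]_{A_{p,\infty}}$-dependence in the equivalence constants of \eqref{reduceM inv equ2}.
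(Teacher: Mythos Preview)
Your proposal is correct and follows essentially the same approach as the paper. Both arguments rest on the same pointwise inequalities $\|A_Q^{-1}M\|\le\|A_Q^{-1}W^{1/p}(x)\|\,\|W^{-1/p}(x)M\|$ and $\|W^{-1/p}(x)M\|\le\|A_QW^{-1/p}(x)\|\,\|A_Q^{-1}M\|$, control the first cross-norm via Lemma~\ref{reduceM} applied to $A_Q^{-1}$ together with Jensen, and the second via Lemma~\ref{reduceM} applied to $W^{-1/p}(y)$ together with Definition~\ref{def ap,infty}; the only difference is that you package the two inequalities as a sandwich at the outset, whereas the paper first establishes $\log_+$-integrability, then the lower bound (which yields full $\log$-integrability), and finally the upper bound.
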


\begin{proof}
We first show
\begin{align}\label{reduceM inv equ4}
\log_+\left[\left\|W^{-\frac{1}{p}}(\cdot)M\right\|^p\right]\in L^1(Q).
\end{align}
By $W\in A_{p,\infty}$ and Lemma \ref{reduceM},
we find that, for any cube $Q\subset\mathbb{R}^n$,
\begin{align*}
\log_+\left[\left\|A_QW^{-\frac{1}{p}}(\cdot)\right\|^p\right]\in L^1(Q).
\end{align*}
From this and Lemma \ref{exchange}, we infer that,
for any cube $Q\subset\mathbb{R}^n$,
\begin{align*}
\log_+\left[\left\|W^{-\frac{1}{p}}(\cdot)M\right\|^p\right]
&\leq\log_+\left[\left\|W^{-\frac{1}{p}}(\cdot)A_Q\right\|^p\right]
+\log_+\left[\left\|A_Q^{-1}M\right\|^p\right]\\
&=\log_+\left[\left\|A_QW^{-\frac{1}{p}}(\cdot)\right\|^p\right]
+\log_+\left[\left\|A_Q^{-1}M\right\|^p\right]
\in L^1(Q).
\end{align*}
This finishes the proof of \eqref{reduceM inv equ4}.

Next, we prove \eqref{reduceM inv equ1} and \eqref{reduceM inv equ2}.
For any cube $Q\subset\mathbb{R}^n$ and almost every $x\in\mathbb{R}^n$,
\begin{align*}
\left\|A_Q^{-1}M\right\|^p
\leq\left\|A_Q^{-1}W^{\frac{1}{p}}(x)\right\|^p\left\|W^{-\frac{1}{p}}(x)M\right\|^p,
\end{align*}
which, together with \eqref{reduceM inv equ4}, further implies that
\begin{align}\label{2.16x}
\left\|A_Q^{-1}M\right\|^p
&\leq\exp\left\{\fint_Q\log\left[\left\|A_Q^{-1}W^{\frac{1}{p}}(x)\right\|^p\right]\,dx\right\}\\
&\quad\times\exp\left\{\fint_Q\log\left[\left\|W^{-\frac{1}{p}}(x)M\right\|^p\right]\,dx\right\}.\notag
\end{align}
By Lemma \ref{exchange}, Jensen's inequality, and Lemma \ref{reduceM}, we have,
for any cube $Q\subset\mathbb{R}^n$,
\begin{align*}
&\exp\left\{\fint_Q\log\left[\left\|A_Q^{-1}W^{\frac{1}{p}}(x)\right\|^p\right]\,dx\right\}\\
&\quad=\exp\left\{\fint_Q\log
\left[\left\|W^{\frac{1}{p}}(x)A_Q^{-1}\right\|^p\right]\,dx\right\}\notag\\
&\quad\leq\fint_Q\left\|W^{\frac{1}{p}}(x)A_Q^{-1}\right\|^p\,dx
\sim\left\|A_QA_Q^{-1}\right\|^p=1.\notag
\end{align*}
This, combined with \eqref{2.16x}, further implies that,
for any cube $Q\subset\mathbb{R}^n$,
\begin{align}\label{1}
0<\left\|A_Q^{-1}M\right\|^p
\lesssim\exp\left\{\fint_Q\log\left[\left\|W^{-\frac{1}{p}}(x)M\right\|^p\right]\,dx\right\}.
\end{align}
Using this and \eqref{reduceM inv equ4}, we obtain \eqref{reduceM inv equ1}.

On the other hand, from Lemmas \ref{exchange} and \ref{reduceM}
and the definition of $[W]_{A_{p,\infty}}$, we deduce that,
for any cube $Q\subset\mathbb{R}^n$,
\begin{align*}
&\exp\left\{\fint_Q\log\left[\left\|W^{-\frac{1}{p}}(x)M\right\|^p\right]\,dx\right\}\\
&\quad\leq\exp\left\{\fint_Q\log\left[\left\|W^{-\frac{1}{p}}(x)A_Q\right\|^p
\left\|A_Q^{-1}M\right\|^p\right]\,dx\right\}\\
&\quad=\exp\left\{\fint_Q\log\left[\left\|A_QW^{-\frac{1}{p}}(x)
\right\|^p\right]\,dx\right\}\left\|A_Q^{-1}M\right\|^p
\lesssim[W]_{A_{p,\infty}}
\left\|A_Q^{-1}M\right\|^p,
\end{align*}
which, together with \eqref{1}, further implies that \eqref{reduceM inv equ2}.
This finishes the proof of Lemma \ref{reduceM inv}.
\end{proof}

Now, we recall $\widetilde A_{p,\infty}$-matrix weights
introduced by Volberg (see \cite[(2.2)]{v97}).

\begin{definition}\label{def Volberg}
Let $p\in(0,\infty)$. A matrix weight $W$
is called an $\widetilde A_{p,\infty}(\mathbb{R}^n,\mathbb{C}^m)$-\emph{matrix weight}
if there exists a positive constant $C$ such that,
for any cube $Q\subset\mathbb{R}^n$ and any $\vec z\in\mathbb{C}^m$,
\begin{align*}
\exp\left(\fint_Q\log\left|W^{-\frac{1}{p}}(x)\vec z\right|\,dx\right)
\leq C\sup_{\vec u\in\mathbb{C}^m\setminus\{\vec{\mathbf{0}}\}}|(\vec z,\vec u)|
\left[\fint_Q\left|W^{\frac{1}{p}}(x)\vec u\right|^p\,dx\right]^{-\frac{1}{p}}.
\end{align*}
\end{definition}

In what follows, we denote $\widetilde A_{p,\infty}(\mathbb{R}^n,\mathbb{C}^m)$
simply by $\widetilde A_{p,\infty}$.
The following proposition shows that Volberg's condition is equivalent to ours;
it also gives several other equivalent variants of both conditions.

\begin{proposition}\label{compare}
Let $p\in(0,\infty)$ and $W$ be a matrix weight.
For any cube $Q\subset\mathbb R^n$, let $A_Q$ be the reducing operator of order $p$ for $W$.
Then there exists a positive constant $C$, independent of cube $Q$,
such that the following conditions are all equivalent:
\begin{enumerate}[\rm(i)]
\item\label{W1} $W\in\widetilde A_{p,\infty}$ in the sense of Definition \ref{def Volberg};
\item\label{W2}
$\displaystyle
\exp\left(\fint_Q\log\left|W^{-\frac{1}{p}}(x)\vec z\right|\,dx\right)
\leq C\left|A_Q^{-1}\vec z\right|
$
for any $\vec z\in\mathbb C^m$;
\item\label{W3}
$\displaystyle
\exp\left(\fint_Q\log_+\left|W^{-\frac{1}{p}}(x)A_Q\vec v\right|\,dx\right)\leq C
$
for any $\vec v\in\mathbb C^m$ with $|\vec v|=1$;
\item\label{W4}
$\displaystyle
\exp\left(\fint_Q\log_+\left\|W^{-\frac{1}{p}}(x)A_Q U\right\|\,dx\right)\leq C
$
for any $U\in M_m(\mathbb C)$ with $\|U\|=1$;
\item\label{W4b}
$\displaystyle
\exp\left(\fint_Q\log_+\left\|W^{-\frac{1}{p}}(x)A_Q\right\|\,dx\right)\leq C
$;
\item\label{W4c} $\displaystyle \exp\left(\fint_Q\log_+\left(\fint_Q\left\|W^{\frac1p}(x)W^{-\frac{1}{p}}(y)\right\|\,dx\right)\,dy\right)\leq C$;
\item\label{W6} $W\in A_{p,\infty}$ in the sense of Definition \ref{def ap,infty};
\item\label{W5}
$\displaystyle
\exp\left(\fint_Q\log\left\|W^{-\frac{1}{p}}(x)M\right\|\,dx\right)
\leq C\left\|A_Q^{-1}M\right\|
$
for any $M\in M_m(\mathbb C)$.
\end{enumerate}
\end{proposition}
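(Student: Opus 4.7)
The plan is to build a cycle of implications through the eight conditions, exploiting the fact that (i), (ii), (vii), (viii) are phrased in terms of $\log$ while (iii)--(vi) use $\log_+$. The main difficulty is bridging these two groups: a bound on $\fint_Q\log g$ does not automatically give a bound on $\fint_Q\log_+g$, since $g$ may be very small on a large subset of $Q$.

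I first dispose of the routine equivalences. For (i)$\Leftrightarrow$(ii): by Definition \ref{reduce}, $[\fint_Q|W^{1/p}(x)\vec u|^p\,dx]^{1/p}\sim|A_Q\vec u|$, and since $A_Q$ is Hermitian, the supremum in (i) equals $\sup_{\vec w\neq\vec{\mathbf 0}}|(A_Q^{-1}\vec z,\vec w)|/|\vec w|=|A_Q^{-1}\vec z|$. For (vii)$\Leftrightarrow$(viii): the direction (vii)$\Rightarrow$(viii) is Lemma \ref{reduceM inv}, and the converse follows by setting $M=A_Q$ in (viii) and invoking the identification $\fint_Q\|W^{1/p}(x)W^{-1/p}(y)\|^p\,dx\sim\|W^{-1/p}(y)A_Q\|^p$ from Lemmas \ref{exchange} and \ref{reduceM}. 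Taking $M=\vec z$ in (viii) gives (ii). The equivalences (iii)$\Leftrightarrow$(iv)$\Leftrightarrow$(v) are standard matrix-norm manipulations: (iv)$\Rightarrow$(iii) via the rank-one choice $U=\vec v\vec v^*$; (v)$\Rightarrow$(iv) by submultiplicativity; and (iii)$\Rightarrow$(v) via $\|M\|\leq\sqrt m\max_j|M\vec e_j|$ combined with $\log_+\max_j a_j\leq\sum_j\log_+ a_j$. The direction (v)$\Rightarrow$(vii) then follows from $\log\leq\log_+$ and the identification above.

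The crux that closes the cycle is the upgrade (ii)$\Rightarrow$(iii). Plugging $\vec z=A_Q\vec v$ with $|\vec v|=1$ into (ii) gives only the $\log$ bound $\fint_Q\log g(x)\,dx\leq\log C$ for $g(x):=|W^{-1/p}(x)A_Q\vec v|$. To pass to $\log_+$, I decompose $\log_+g=\log g+(\log g)_-$ and control the negative part by
\[
(\log g(x))_-=\log_+\frac{1}{g(x)}\leq\log_+\|A_Q^{-1}W^{1/p}(x)\|,
\]
using that $1/|M\vec v|\leq\|M^{-1}\|$ for unit $\vec v$. The elementary inequality $\log_+t\leq t^p/p$ together with Lemmas \ref{exchange} and \ref{reduceM} then gives
\[
\fint_Q(\log g(x))_-\,dx\leq\frac{1}{p}\fint_Q\|W^{1/p}(x)A_Q^{-1}\|^p\,dx\sim\frac{1}{p}\|A_QA_Q^{-1}\|^p=\frac{1}{p},
\]
closing the gap between (ii) and (iii).

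For the remaining condition (vi), I use the submultiplicative bound $\|W^{1/p}(x)W^{-1/p}(y)\|\leq\|W^{1/p}(x)A_Q^{-1}\|\|W^{-1/p}(y)A_Q\|$ to reduce (v)$\Rightarrow$(vi) to controlling the scalar average $\fint_Q\|W^{1/p}(x)A_Q^{-1}\|\,dx$, which is bounded by Jensen and Lemma \ref{reduceM} when $p\geq1$; for $p\in(0,1)$ the Jensen inequality reverses and one must instead invoke $\log_+t\leq t^p/p$ directly inside the outer logarithm. The converse (vi)$\Rightarrow$(v) is handled by re-expressing $\|W^{-1/p}(y)A_Q\|^p$ via Lemma \ref{reduceM} and comparing with $H(y):=\fint_Q\|W^{1/p}(x)W^{-1/p}(y)\|\,dx$ through Jensen (again with a case split on $p$). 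The main obstacle throughout, both here and in (ii)$\Rightarrow$(iii) above, is the exchange between $\log$ and $\log_+$, which is never automatic and forces a careful bookkeeping of the cases $p\geq1$ versus $p<1$.
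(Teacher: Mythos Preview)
Your overall architecture and the crucial bridge (ii)$\Rightarrow$(iii) match the paper almost exactly; your bound $(\log g)_-\leq\log_+\|A_Q^{-1}W^{1/p}(x)\|$ via $1/|M\vec v|\leq\|M^{-1}\|$ is a harmless variant of the paper's Cauchy--Schwarz identity $1=(W^{-1/p}(x)A_Q\vec v,\,W^{1/p}(x)A_Q^{-1}\vec v)$, and both finish with $\log_+t\lesssim t^p$ and Lemma~\ref{reduceM}. The routine equivalences (i)$\Leftrightarrow$(ii), (iii)$\Leftrightarrow$(iv)$\Leftrightarrow$(v), (vii)$\Rightarrow$(viii)$\Rightarrow$(ii), and your (v)$\Rightarrow$(vii) via $\log\leq\log_+$ are all handled as in the paper.

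The one place you diverge is (v)$\Leftrightarrow$(vi), and there your approach has a real gap. The paper does \emph{not} go through submultiplicativity and a case split on $p$: it simply raises (v) to the $p$-th power (using $p\log_+t=\log_+t^p$) and invokes Lemma~\ref{reduceM} to get
\[
\left\|W^{-\frac1p}(y)A_Q\right\|^p=\left\|A_QW^{-\frac1p}(y)\right\|^p\sim\fint_Q\left\|W^{\frac1p}(x)W^{-\frac1p}(y)\right\|^p\,dx,
\]
which identifies (v) directly with (vi) and works uniformly in $p\in(0,\infty)$; the step (vi)$\Rightarrow$(vii) is then literally $\log\leq\log_+$. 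Your scheme instead tries to compare $\fint_Q\|W^{1/p}(x)W^{-1/p}(y)\|\,dx$ (first power) with $\|W^{-1/p}(y)A_Q\|$, and the Jensen inequality you need points the wrong way in one of the two ranges of $p$ for each direction. Worse, the $p<1$ branch you leave vague (``invoke $\log_+t\leq t^p/p$ directly inside the outer logarithm'') cannot be repaired if one insists on the first power: for $n=m=1$, $p=\tfrac12$, $W(x)=|x|^{-1/2}$ one has $W\in A_{p,\infty}$ but $\fint_Q\|W^{1/p}(x)W^{-1/p}(y)\|\,dx=|y|\fint_Q|x|^{-1}\,dx=\infty$ whenever $0\in Q$. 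The moral is that condition (vi) should be read with the inner norm to the $p$-th power (as the paper's own proof and the passage to (vii) require), and then Lemma~\ref{reduceM} gives the equivalence in one line with no case analysis.
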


\begin{proof}
\eqref{W1}$\Leftrightarrow$\eqref{W2}: Both conditions \eqref{W1} and \eqref{W2} involve an estimate of the same left-hand side. Concerning the right-hand sides of these estimates,
by \eqref{equ_reduce}, we conclude that,
for any cube $Q\subset\mathbb{R}^n$ and any $\vec z\in\mathbb{C}^m$,
\begin{align*}
&\sup_{\vec u\in\mathbb{C}^m\setminus\{\vec{\mathbf{0}}\}}|(\vec z,\vec u)|
\left[\fint_Q\left|W^{\frac{1}{p}}(x)\vec u\right|^p\,dx\right]^{-\frac{1}{p}}\\
&\quad\sim\sup_{\vec u\in\mathbb{C}^m\setminus\{\vec{\mathbf{0}}\}}
\frac{|(\vec z,\vec u)|}{|A_Q\vec u|}
=\sup_{\vec u\in\mathbb{C}^m\setminus\{\vec{\mathbf{0}}\}}
\frac{|(\vec z,A_Q^{-1}\vec u)|}{|\vec u|}
=\sup_{\vec u\in\mathbb{C}^m\setminus\{\vec{\mathbf{0}}\}}
\frac{|(A_Q^{-1}\vec z,\vec u)|}{|\vec u|}
=\left|A_Q^{-1}\vec z\right|.
\end{align*}
Thus, the right-hand sides of \eqref{W1} and \eqref{W2}
are also comparable, and hence the two conditions are equivalent.

\eqref{W2}$\Rightarrow$\eqref{W3}: We follow an argument contained in the proof of \cite[Lemma 3.1]{v97}.
For any cube $Q\subset\mathbb{R}^n$ and any $\vec v\in\mathbb{C}^m$ with $|\vec v|=1$,
\begin{align*}
&\fint_Q\log_+\left|W^{-\frac{1}{p}}(x)A_Q\vec v\right|\,dx\\
&\quad=\fint_Q\log\left|W^{-\frac{1}{p}}(x)A_Q\vec v\right|\,dx
+\fint_Q\log_+\left[\left|W^{-\frac{1}{p}}(x)A_Q\vec v\right|^{-1}\right]\,dx\\
&\quad=:I(\vec v)+II(\vec v).
\end{align*}
By the assumption \eqref{W2}, we obtain,
for any $\vec v\in\mathbb{C}^m$ with $|\vec v|=1$,
\begin{equation*}
I(\vec v)
\leq\log\left(C\left|A_Q^{-1}A_Q\vec v\right|\right)
=\log\left(C\left|\vec v\right|\right)
=\log C,
\end{equation*}
where $C$ is the same as in \eqref{W2}. On the other hand,
since $W^{\frac{1}{p}}(x)$ and $A_Q$ are Hermitian matrices
and by the Cauchy--Schwarz inequality, we find that,
for almost every $x\in\mathbb R^n$ and for any $\vec v\in\mathbb{C}^m$ with $|\vec v|=1$,
\begin{equation*}
1
=\left|\vec v\right|^2
=\left(W^{-\frac{1}{p}}(x)A_Q\vec v,W^{\frac{1}{p}}(x)A_Q^{-1}\vec v\right)
\leq\left|W^{-\frac{1}{p}}(x)A_Q\vec v\right|\left|W^{\frac{1}{p}}(x)A_Q^{-1}\vec v\right|.
\end{equation*}
This, combined with the elementary inequality $\log_+ t<t$ for $t\in(0,\infty)$
and \eqref{equ_reduce}, further implies that,
for any $\vec v\in\mathbb{C}^m$ with $|\vec v|=1$,
\begin{align*}
II(\vec v)
&\leq\fint_Q\log_+\left|W^{\frac{1}{p}}(x)A_Q^{-1}\vec v\right|\,dx
=\frac{1}{p}\fint_Q\log_+\left[\left|W^{\frac{1}{p}}(x)A_Q^{-1}\vec v\right|^p\right]\,dx \\
&\leq\frac{1}{p}\fint_Q\left|W^{\frac{1}{p}}(x)A_Q^{-1}\vec v\right|^p\,dx
\sim\frac{1}{p}\left|A_Q A_Q^{-1}\vec v\right|^p=\frac{1}{p}.
\end{align*}
Combining the bounds for $I(\vec v)$ and $II(\vec v)$,
we find that \eqref{W3} holds and $C_{\eqref{W3}}\lesssim C_{\eqref{W2}}$,
where the implicit positive constant depends only on $m$ and $p$.

\eqref{W3}$\Rightarrow$\eqref{W4}: For any $i\in\{1,\ldots,m\}$, let
$\vec e_i:=(0,\ldots,0,1,0,\ldots,0)^T\in\mathbb C^m$.
Then, from the Cauchy--Schwarz inequality, we infer that,
for any $M\in M_m(\mathbb C)$ and $\vec z:=(z_1,\ldots,z_m)\in\mathbb C^m$,
\begin{equation*}
\left|M\vec z\right|
\leq\sum_{i=1}^m\left|z_i\right| \left| M\vec e_i\right|
\leq\sqrt{m}\left(\sum_{i=1}^m\left|z_i\right|^2\right)^{1/2}
\max_{i\in\{1,\ldots,m\}}\left|M\vec e_i\right|
\end{equation*}
and hence
\begin{align*}
\|M\|\leq\sqrt{m}\max_{i\in\{1,\ldots,m\}}\left| M\vec e_i\right|.
\end{align*}
By this, we conclude that, for any cube $Q\subset\mathbb R^n$
and any $U\in M_m(\mathbb C)$ with $\|U\|=1$,
\begin{align}\label{estimate}
\log_+\left\|W^{-\frac{1}{p}}(x)A_Q^{-1}U\right\|
&\leq\log_+\left[\sqrt{m}\max_{i\in\{1,\ldots,m\}}\left|W^{-\frac{1}{p}}(x)A_QU\vec e_i\right|\right]\\
&\leq\log\sqrt{m}+\sum_{i=1}^m\log_+\left|W^{-\frac{1}{p}}(x)A_QU\vec e_i\right|\notag\\
&\leq\log\sqrt{m}+\sum_{i=1}^m\log_+\left|W^{-\frac{1}{p}}(x)A_Q\frac{U\vec e_i}{|U\vec e_i|}\right|,\notag
\end{align}
where the last step follows from the fact that
$|U\vec e_i|\leq\|U\|\, |\vec e_i|=1$.
(If $U\vec e_i=0$, then the $i$-th term in the sum vanishes and can simply be ignored.)
Integrating both sides of \eqref{estimate} and using the assumption \eqref{W3}
with $U\vec e_i/|U\vec e_i|$ in place of $\vec v_i$
for each $i\in\{1,\ldots,m\}$, we obtain
\begin{equation*}
\fint_Q \log_+\left\|W^{-\frac{1}{p}}(x)A_Q^{-1}U\right\|\,dx
\leq\log\sqrt{m}+\sum_{i=1}^m C=\log\sqrt{m}+Cm,
\end{equation*}
where $C$ is the same as in \eqref{W3}.
Thus, \eqref{W4} holds and $C_{\eqref{W4}}\leq\log\sqrt{m}+C_{\eqref{W3}}m$.

\eqref{W4}$\Rightarrow$\eqref{W4b}: This is obvious.

\eqref{W4b}$\Leftrightarrow$\eqref{W4c}:
Let us first change the integration variable in \eqref{W4b} to $y$.
Taking both sides of \eqref{W4b} to power $p$ and using the usual rules,
we can further replace $\|W^{-\frac1p}(y)A_Q\|$ by $\|W^{-\frac1p}(y)A_Q\|^p$.
On the other hand, we have
\begin{equation*}
\left\|W^{-\frac1p}(y)A_Q\right\|^p
=\left\|A_QW^{-\frac1p}(y)\right\|^p\sim\fint_Q \left\|W^{\frac1p}(x)W^{-\frac1p}(y)\right\|^p\,dx.
\end{equation*}
Observe that, if $A\leq cB$, where $c$ is a positive constant,
then $\log_+A\leq\log_+ c+\log_+ B$. From these three observations,
the equivalence \eqref{W4b}$\Leftrightarrow$\eqref{W4c} easily follows.

\eqref{W4c}$\Rightarrow$\eqref{W6}: This is immediate from the definition and the estimate $\log\leq\log_+$.

\eqref{W6}$\Rightarrow$\eqref{W5}:  This was proved in Lemma \ref{reduceM inv}.

\eqref{W5}$\Rightarrow$\eqref{W2}:
This follows, with $C_{\eqref{W2}}\leq C_{\eqref{W5}}$,
from applying \eqref{W5} to $M= [O_{m,m-1}\ \vec z]$.

We have thus proved that
\eqref{W1}$\Leftrightarrow$\eqref{W2}$\Rightarrow$\eqref{W3}$\Rightarrow$\eqref{W4}$\Rightarrow$\eqref{W4b}$\Leftrightarrow$\eqref{W4c}$\Rightarrow$\eqref{W6}$\Rightarrow$\eqref{W5}$\Rightarrow$\eqref{W2},
and this shows that all these conditions are equivalent,
which completes the proof of Proposition \ref{compare}.
\end{proof}

The following proposition gives an alternative way of computing the $A_{p,\infty}$ characteristic, which is sometimes useful.

\begin{proposition}\label{2.13}
Let $p\in(0,\infty)$ and $W$ be a matrix weight.
Assume, for any cube $Q\subset\mathbb R^n$,
\begin{align*}
\log_+\left(\int_Q\left\|W^{\frac{1}{p}}(x)W^{-\frac{1}{p}}(\cdot)\right\|^p\,dx\right)\in L^1(Q).
\end{align*}
Then
\begin{align}\label{the =}
[W]_{A_{p,\infty}}
&=\sup_{\mathrm{cube}\,Q}\sup_{H\in\mathcal F_Q}
\left[\fint_Q\left\|W^{\frac{1}{p}}(y)H(y)\right\|^p\,dy\right]^{-1}\\
&\quad\times\exp\left(\fint_Q\log\left(\fint_Q
\left\|W^{\frac{1}{p}}(x)H(y)\right\|^p\,dx\right)\,dy\right),\notag
\end{align}
where
\begin{align*}
\mathcal F_Q:=&\Bigg\{H:\ \mathbb{R}^n\to M_m(\mathbb{C})\text{ measurable}
:\ \fint_Q\left\|W^{\frac{1}{p}}(y)H(y)\right\|^p\,dy\neq0,\\
&\qquad\log_+\left(\fint_Q\left\|W^{\frac{1}{p}}(x)H(\cdot)\right\|^p\,dx\right)\in L^1(Q)\Bigg\}.
\end{align*}
\end{proposition}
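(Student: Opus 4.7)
The plan is to establish the asserted equality by proving the two inequalities $[W]_{A_{p,\infty}}\leq\text{RHS}$ and $\text{RHS}\leq[W]_{A_{p,\infty}}$ separately. The approach is direct: one inequality comes from a specific choice of the test function $H$, and the other from submultiplicativity of the operator norm combined with Jensen's inequality for $\log$.

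First I would handle the direction $[W]_{A_{p,\infty}}\leq\text{RHS}$. Here I choose the canonical test function $H(y):=W^{-\frac{1}{p}}(y)$. Since then $W^{\frac{1}{p}}(y)H(y)=I_m$ pointwise, one has $\fint_Q\|W^{\frac{1}{p}}(y)H(y)\|^p\,dy=1$ (so the prefactor in the supremum is $1$), and the inner exponential reduces exactly to the defining quantity of $[W]_{A_{p,\infty}}$ for the cube $Q$. The admissibility of this $H$ in $\mathcal F_Q$ follows from the standing hypothesis of the proposition, after absorbing the harmless $\log|Q|$ coming from the difference between $\fint_Q$ and $\int_Q$. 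Taking the supremum over $Q$ then yields the desired inequality.

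For the harder direction $\text{RHS}\leq[W]_{A_{p,\infty}}$, fix a cube $Q$ and $H\in\mathcal F_Q$. Starting from the matrix factorisation $W^{\frac{1}{p}}(x)H(y)=[W^{\frac{1}{p}}(x)W^{-\frac{1}{p}}(y)][W^{\frac{1}{p}}(y)H(y)]$, submultiplicativity of $\|\cdot\|$ gives
\[
\left\|W^{\frac{1}{p}}(x)H(y)\right\|^p\leq\left\|W^{\frac{1}{p}}(x)W^{-\frac{1}{p}}(y)\right\|^p\,\left\|W^{\frac{1}{p}}(y)H(y)\right\|^p.
\]
I would then successively average over $x\in Q$, take $\log$, average over $y\in Q$, and exponentiate. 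The result factorises into two exponentials: the first, $\exp(\fint_Q\log\fint_Q\|W^{\frac{1}{p}}(x)W^{-\frac{1}{p}}(y)\|^p\,dx\,dy)$, is bounded by $[W]_{A_{p,\infty}}$ by definition, and the second, $\exp(\fint_Q\log\|W^{\frac{1}{p}}(y)H(y)\|^p\,dy)$, is bounded by $\fint_Q\|W^{\frac{1}{p}}(y)H(y)\|^p\,dy$ via Jensen's inequality applied to the concave function $\log$. Dividing through by this latter factor yields the required bound, and taking the supremum over $Q$ and $H\in\mathcal F_Q$ completes this direction.

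The main obstacle is not the algebra but the measure-theoretic bookkeeping: one must check that all logarithms and averages appearing in the manipulation are well defined, and this is precisely what the $\log_+$-integrability conditions built into both the proposition's hypothesis and the set $\mathcal F_Q$ are designed to guarantee. The only potentially delicate case is when $\fint_Q\|W^{\frac{1}{p}}(x)H(y)\|^p\,dx$ vanishes on a set of positive measure in $y$, but then the outer exponential collapses to $0$ and the inequality becomes trivial, so no additional argument is needed.
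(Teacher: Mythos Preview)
Your proposal is correct and follows essentially the same approach as the paper: both directions are handled identically, with $H=W^{-\frac1p}$ for the inequality $[W]_{A_{p,\infty}}\leq\text{RHS}$ and submultiplicativity combined with Jensen's inequality for the reverse. Your treatment is in fact slightly more careful than the paper's in spelling out the admissibility of $H=W^{-\frac1p}$ and the measure-theoretic edge cases.
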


\begin{proof}
Taking $H=W^{-\frac1p}$, we find that the right-hand side of \eqref{the =}
is at least as big as $[W]_{A_{p,\infty}}$.
Now, we show that they are equal.
By the definition of $[W]_{A_{p,\infty}}$
and Jensen's inequality, we have,
for any cube $Q\subset\mathbb{R}^n$ and any $H\in \mathcal F_Q$,
\begin{align*}
&\exp\left(\fint_Q\log\left(\fint_Q
\left\|W^{\frac{1}{p}}(x)H(y)\right\|^p\,dx\right)\,dy\right)\\
&\quad\leq\exp\left(\fint_Q\log\left(\fint_Q
\left\|W^{\frac{1}{p}}(x)W^{-\frac{1}{p}}(y)\right\|^p\,dx\right)\,dy\right)\\
&\qquad\times\exp\left\{\fint_Q\log\left[\left\|W^{\frac{1}{p}}(y)H(y)\right\|^p\right]\,dy\right\}\\
&\quad\leq[W]_{A_{p,\infty}}
\fint_Q\left\|W^{\frac{1}{p}}(y)H(y)\right\|^p\,dy.
\end{align*}
The proof of Proposition \ref{the =} is finished
after dividing both sides by the last factor
and taking supremum over all functions $H\in\mathcal F_Q$ and all cubes $Q$.
\end{proof}

As a consequence of the previous result, we obtain the following distributional estimate.

\begin{corollary}\label{8 var}
Let $p\in(0,\infty)$, $W\in A_{p,\infty}$,
and $\{A_Q\}_{\mathrm{cube}\,Q}$ be a family of
reducing operators of order $p$ for $W$.
Then there exists a positive constant $C$,
depending only on $m$ and $p$, such that,
for any cube $Q\subset\mathbb R^n$ and any $M\in(0,\infty)$,
\begin{align*}
\left|\left\{y\in Q:\ \left\|A_Q W^{-\frac1p}(y)\right\|^p\geq e^M\right\}\right|
\leq \frac{\log(C[W]_{A_{p,\infty}})}{M} |Q|.
\end{align*}
\end{corollary}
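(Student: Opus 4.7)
The plan is to reduce the claim to a standard Chebyshev-type argument, using the fact that
\[
\fint_Q \log_+ \left\|A_Q W^{-\frac1p}(y)\right\|^p \, dy \leq \log\bigl(C[W]_{A_{p,\infty}}\bigr)
\]
for some constant $C=C(m,p)$. Once this is in hand, the corollary follows immediately: setting $f(y):=\|A_Q W^{-\frac1p}(y)\|^p$ and observing that $\log_+ f(y)\geq M\mathbf 1_{\{f\geq e^M\}}(y)$, we get
\[
M \left|\bigl\{y\in Q:\ f(y)\geq e^M\bigr\}\right|
\leq \int_Q \log_+ f(y)\,dy
\leq |Q|\log\bigl(C[W]_{A_{p,\infty}}\bigr),
\]
which rearranges to the asserted estimate.

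So the main task is the logarithmic bound. I would write $\log_+ = \log + \log_-$ and bound each part separately. For the $\log$ part, I would invoke Lemma \ref{reduceM} to replace the interior average $\fint_Q\|W^{\frac1p}(x)W^{-\frac1p}(y)\|^p\,dx$ by $\|A_Q W^{-\frac1p}(y)\|^p$ (up to constants depending only on $m,p$), and then the defining expression of $[W]_{A_{p,\infty}}$ in Definition \ref{def ap,infty} gives directly
\[
\fint_Q \log\left\|A_Q W^{-\frac1p}(y)\right\|^p \, dy \leq \log\bigl(C_1[W]_{A_{p,\infty}}\bigr).
\]

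For the $\log_-$ part, the key observation is that the matrix identity $(A_Q W^{-\frac1p}(y))\cdot(W^{\frac1p}(y) A_Q^{-1}) = I_m$ forces $\|A_Q W^{-\frac1p}(y)\|^{-p}\leq \|W^{\frac1p}(y) A_Q^{-1}\|^p$, so using the elementary inequality $\log_+ t\leq t$ and Lemma \ref{reduceM} together with Lemma \ref{exchange},
\[
\fint_Q \log_-\left\|A_Q W^{-\frac1p}(y)\right\|^p \, dy
\leq \fint_Q \left\|W^{\frac1p}(y) A_Q^{-1}\right\|^p \, dy
\sim \left\|A_Q A_Q^{-1}\right\|^p = 1.
\]
Adding the two bounds yields the desired logarithmic estimate with a constant of the form $\log(C[W]_{A_{p,\infty}})$, and then Chebyshev finishes the proof.

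The only mildly delicate step is controlling $\log_-$: one has to remember that the $A_{p,\infty}$ definition only bounds the signed logarithm, not its positive part, so without the inverse-matrix bookkeeping the argument would be circular. Everything else is routine once that observation is made.
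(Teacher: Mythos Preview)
Your proof is correct, and the route you take is genuinely different from the paper's.

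The paper proves the corollary by invoking Proposition~\ref{2.13}: it chooses the test function
\[
H:=W^{-\frac1p}\mathbf{1}_{Q\cap E_Q}+A_Q^{-1}\mathbf{1}_{Q\setminus E_Q},
\qquad E_Q:=\bigl\{y\in Q:\ \|A_QW^{-\frac1p}(y)\|^p\geq e^M\bigr\},
\]
verifies $H\in\mathcal F_Q$, and then reads off the estimate from the variational formula \eqref{the =}. Your argument instead establishes the quantitative $\log_+$ bound
\[
\fint_Q\log_+\left\|A_QW^{-\frac1p}(y)\right\|^p\,dy\leq\log\bigl(C[W]_{A_{p,\infty}}\bigr)
\]
directly---by splitting $\log_+=\log+\log_-$, controlling the $\log$ piece from Definition~\ref{def ap,infty} and Lemma~\ref{reduceM}, and the $\log_-$ piece via the inverse-matrix trick and Lemma~\ref{reduceM}---and then finishes with Chebyshev. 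This is essentially a matrix-level, constant-tracking version of the argument the paper uses in the step \eqref{W2}$\Rightarrow$\eqref{W3} of Proposition~\ref{compare}, redeployed here rather than the separate machinery of Proposition~\ref{2.13}. Your approach is shorter and more self-contained; the paper's approach has the virtue of illustrating what Proposition~\ref{2.13} is good for.
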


\begin{proof}
Let cube $Q\subset\mathbb R^n$ and $M\in(0,\infty)$ be fixed.
We make use of Proposition \ref{2.13} with
\begin{align*}
H:=W^{-\frac1p}\mathbf{1}_{Q\cap E_Q}
+A_Q^{-1}\mathbf{1}_{Q\setminus E_Q},
\end{align*}
where
\begin{align*}
E_Q:=\left\{y\in Q:\ \left\|A_Q W^{-\frac1p}(y)\right\|^p\geq e^M\right\}.
\end{align*}
We first prove that $H$ belongs to $\mathcal F_Q$ defined in Proposition \ref{2.13}.
By the definition of $H$, we obtain
\begin{align*}
\fint_Q\left\|W^{\frac{1}{p}}(y)H(y)\right\|^p\,dy
=\frac{1}{|Q|}\left[|E_Q|
+\int_{Q\setminus E_Q}\left\|W^{\frac1p}(y)A_Q^{-1}\right\|^p\,dy\right],
\end{align*}
which, together with Definition \ref{MatrixWeight}(ii) and Lemma \ref{reduceM}, further implies that
\begin{align}\label{8 var equ}
0<\fint_Q\left\|W^{\frac{1}{p}}(y)H(y)\right\|^p\,dy
\leq 1+\fint_Q\left\|W^{\frac{1}{p}}(y)A_Q^{-1}\right\|^p\,dy
\sim 1+\left\|A_QA_Q^{-1}\right\|^p
\sim 1.
\end{align}
Moreover, for any $y\in Q$,
\begin{align*}
\log_+\left(\fint_Q\left\|W^{\frac1p}(x)H(y)\right\|^p\,dx\right)
&=\mathbf{1}_{E_Q}(y)
\log_+\left(\fint_Q\left\|W^{\frac1p}(x)W^{-\frac1p}(y)\right\|^p\,dx\right)\\
&\quad+\mathbf{1}_{Q\setminus E_Q}(y)
\log_+\left(\fint_Q\left\|W^{\frac1p}(x)A_Q^{-1}\right\|^p\,dx\right),
\end{align*}
where the first term is in $L^1(Q)$ by the definition of $W\in A_{p,\infty}$
and the second term is clearly in the same space as an indicator function times a constant.
These, combined with \eqref{8 var equ}, further imply that $H\in\mathcal F_Q$.

From Lemma \ref{reduceM} and the definitions of $H$ and $E_Q$, we infer that
\begin{align*}
&\exp\left(\fint_Q\log\left(\fint_Q\left\|W^{\frac1p}(x)H(y)\right\|^p dx\right)\,dy\right)\\
&\quad\sim\exp\left\{\fint_Q\log\left[\|A_QH(y)\|^p\right]\,dy\right\}\\
&\quad=\exp\left(\frac{1}{|Q|}\left\{
\int_{E_Q}\log\left[\left\|A_QW^{-\frac1p}(y)\right\|^p\right]\,dy
+\int_{Q\setminus E_Q}\log\left(\left\|A_QA_Q^{-1}\right\|^p\right)\,dy\right\}\right)\\
&\quad\geq\exp\left(\frac{1}{|Q|}\int_{E_Q}M\,dy\right)
=\exp\left(\frac{|E_Q|M}{|Q|}\right),
\end{align*}
which, together with \eqref{8 var equ} and Proposition \ref{2.13}, further implies that
\begin{align*}
[W]_{A_{p,\infty}}
\gtrsim\exp\left(\frac{|E_Q|M}{|Q|}\right),
\end{align*}
from which the claimed bound follows by taking logarithms of both sides.
This finishes the proof of Corollary \ref{8 var}.
\end{proof}

The following variant is a restatement of \cite[Lemma 3.1]{v97},
there stated for matrix weights satisfying
the $\widetilde A_{p,\infty}$ condition of Definition \ref{def Volberg},
which is equivalent to $A_{p,\infty}$ by Proposition \ref{compare}.
Applying an argument similar to that used in the proof of Corollary \ref{8 var},
we could give another proof of Lemma \ref{8 prepare}, but we omit the details.

\begin{lemma}\label{8 prepare}
Let $p\in(0,\infty)$, $W\in A_{p,\infty}$,
and $\{A_Q\}_{\mathrm{cube}\,Q}$ be a family of
reducing operators of order $p$ for $W$.
Let $Q$ be a cube of $\mathbb{R}^n$ and
$\{Q_j\}_{j\in J}\subset Q$ a pairwise disjoint cube sequence.
Let $M\in(0,\infty)$.
If, for any $j\in J$,
$\|A_QA_{Q_j}^{-1}\|^p\geq e^M$,
then there exists a positive constant $C$,
depending only on $m$ and $p$, such that
\begin{align*}
\sum_{j\in J}|Q_j|\leq\frac{\log(C[W]_{A_{p,\infty}})}{M}|Q|.
\end{align*}
\end{lemma}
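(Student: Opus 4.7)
The plan is to mimic the proof of Corollary \ref{8 var}, but with a test function $H$ tailored to the subcubes $\{Q_j\}_{j\in J}$ rather than to a level set. Specifically, letting $E:=\bigcup_{j\in J}Q_j\subset Q$, I would apply Proposition \ref{2.13} with
\begin{align*}
H:=\sum_{j\in J}A_{Q_j}^{-1}\mathbf 1_{Q_j}+A_Q^{-1}\mathbf 1_{Q\setminus E}.
\end{align*}
The motivation is that, on each $Q_j$, the quantity $\fint_Q\|W^{1/p}(x)H(y)\|^p\,dx$ becomes essentially $\|A_QA_{Q_j}^{-1}\|^p$, which is precisely the quantity controlled from below by $e^M$ in the hypothesis.

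First I would verify that $H\in\mathcal F_Q$. The normalising denominator is controlled via Lemma \ref{reduceM}: on each $Q_j$ we have $\fint_{Q_j}\|W^{1/p}(y)A_{Q_j}^{-1}\|^p\,dy\sim\|A_{Q_j}A_{Q_j}^{-1}\|^p=1$, and on $Q\setminus E$ the corresponding average over $Q$ is $\sim 1$; summing yields
\begin{align*}
0<\fint_Q\left\|W^{\frac1p}(y)H(y)\right\|^p\,dy\sim 1.
\end{align*}
The log-integrability requirement is straightforward, since $y\mapsto\fint_Q\|W^{1/p}(x)H(y)\|^p\,dx$ is, on $Q_j$, the constant $\fint_Q\|W^{1/p}(x)A_{Q_j}^{-1}\|^p\,dx$ and, on $Q\setminus E$, a constant comparable to $1$.

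Next I would compute the numerator in \eqref{the =}. Using Lemma \ref{reduceM} again,
\begin{align*}
&\exp\left(\fint_Q\log\left(\fint_Q\left\|W^{\frac1p}(x)H(y)\right\|^p\,dx\right)\,dy\right)\\
&\quad\sim\exp\left\{\fint_Q\log\left[\|A_QH(y)\|^p\right]\,dy\right\}\\
&\quad=\exp\left(\frac{1}{|Q|}\sum_{j\in J}|Q_j|\log\left\|A_QA_{Q_j}^{-1}\right\|^p\right)
\geq\exp\left(\frac{M}{|Q|}\sum_{j\in J}|Q_j|\right),
\end{align*}
where the final inequality uses the hypothesis $\|A_QA_{Q_j}^{-1}\|^p\geq e^M$. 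Combining this with the denominator bound via Proposition \ref{2.13} gives $[W]_{A_{p,\infty}}\gtrsim\exp(M\sum_j|Q_j|/|Q|)$, and taking logarithms yields the claim.

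I expect the only mildly delicate point to be checking cleanly that $H\in\mathcal F_Q$ (in particular, that the denominator is bounded both above and below by positive constants independent of $J$), since the numerator calculation and the final logarithmic step are essentially forced once the test function is chosen. The argument is otherwise a direct transcription of the Corollary \ref{8 var} scheme, replacing the single level set $E_Q$ with the disjoint family $\{Q_j\}_{j\in J}$ and replacing $W^{-1/p}$ by the piecewise-constant reducing operators $A_{Q_j}^{-1}$.
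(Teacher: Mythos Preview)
Your proposal is correct and follows precisely the approach the paper itself suggests: the paper omits the proof but remarks that ``applying an argument similar to that used in the proof of Corollary \ref{8 var}, we could give another proof of Lemma \ref{8 prepare}''. One small note: the uniform lower bound on the denominator that you flag as ``mildly delicate'' is in fact neither needed nor obvious when $|E|\ll|Q|$; only the upper bound $\fint_Q\|W^{1/p}(y)H(y)\|^p\,dy\lesssim 1$ and strict positivity (for membership in $\mathcal F_Q$) are required, and both are immediate.
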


\section{Self-improvement properties}\label{sec:self-improvement}

The following proposition is another equivalent characterization of
$A_{p,\infty}$-matrix weights. It can be seen as a self-improvement property in the sense that the integrability of a logarithm, as in the equivalent conditions of Proposition \ref{compare}, already implies the integrability of a positive power of the same function.

\begin{proposition}\label{WAsL}
Let $p\in(0,\infty)$ and $W$ be a matrix weight.
Then $W\in A_{p,\infty}$ if and only if
there exists $u\in(0,\infty)$ such that
\begin{align}\label{WAs}
\sup_{\mathrm{cube}\,Q\subset\mathbb{R}^n}
\fint_Q\left\|W^{-\frac{1}{p}}(x)A_Q\right\|^u\,dx<\infty,
\end{align}
where $A_Q$ is the reducing operator of order $p$ for $W$.
\end{proposition}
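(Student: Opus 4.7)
The two implications are treated separately.

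\emph{``If'' direction.} Assume \eqref{WAs} holds with supremum $C$. By Jensen's inequality (applied to the concave function $\log$), $\fint_Q \log\|W^{-\frac1p}(x)A_Q\|\,dx \leq \frac{\log C}{u}$ for every cube $Q$. Using the identity $\log_+ t = \log t + \log_+(t^{-1})$ together with the Cauchy--Schwarz bound $\|W^{-\frac1p}(x)A_Q\|^{-1} \leq \|W^{\frac1p}(x)A_Q^{-1}\|$ (derivable from $1 = |\vec v|^2 = (W^{-\frac1p}(x)A_Q\vec v,\,W^{\frac1p}(x)A_Q^{-1}\vec v)$ exactly as in the proof of \eqref{W2}$\Rightarrow$\eqref{W3} in Proposition \ref{compare}, combined with Lemma \ref{exchange}), the elementary inequality $\log_+ t \leq t^p/p$, and Lemma \ref{reduceM}, I would obtain $\fint_Q \log_+\|W^{-\frac1p}(x)A_Q\|\,dx \leq \frac{\log C}{u} + \frac{1}{p}$. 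This is precisely condition \eqref{W4b} of Proposition \ref{compare}, hence $W \in A_{p,\infty}$.

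\emph{``Only if'' direction, outline.} Assume $W \in A_{p,\infty}$. The plan is a Calder\'on--Zygmund stopping-time iteration driven by Corollary \ref{8 var} (equivalently Lemma \ref{8 prepare}). Fix a cube $Q_0 \subset \mathbb R^n$ and choose $M, C_1 \in (0,\infty)$ with $\epsilon := \log(C[W]_{A_{p,\infty}})/M < 1$, say $\epsilon = 1/4$ by taking $M = 4\log(C[W]_{A_{p,\infty}})$. I would build a tree $\{\mathcal G^{(\ell)}\}_{\ell \geq 0}$ of subcubes of $Q_0$ with $\mathcal G^{(0)} = \{Q_0\}$; for each $Q \in \mathcal G^{(\ell)}$, the children $\mathcal G^{(\ell+1)}(Q)$ form a pairwise disjoint family of subcubes $Q' \subset Q$ with $\|A_Q A_{Q'}^{-1}\|^p \in [e^M, C_1 e^M]$ whose union covers the ``bad set'' $\{x \in Q : \|W^{-\frac1p}(x)A_Q\|^p > C_1 e^M\}$ up to measure zero. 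Such cubes exist because, for each bad $x$, the map $Q' \mapsto \|A_Q A_{Q'}^{-1}\|^p$ (along cubes $Q' \ni x$ shrinking to $x$) is continuous and interpolates between $1$ at $Q'=Q$ and $\|W^{-\frac1p}(x)A_Q\|^p > C_1 e^M$ in the limit, by Lemma \ref{reduceM inv} and Lebesgue differentiation; a Vitali-type argument then extracts a disjoint subfamily.

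\emph{Key estimates and conclusion.} By Lemma \ref{8 prepare}, $\sum_{Q' \in \mathcal G^{(\ell+1)}(Q)} |Q'| \leq \epsilon|Q|$; iterating, $|G^{(\ell)}| \leq \epsilon^\ell|Q_0|$, where $G^{(\ell)}$ is the union of generation-$\ell$ cubes. On the ``good part'' $Q^{(\ell)} \setminus G^{(\ell+1)}$ of each $Q^{(\ell)} \in \mathcal G^{(\ell)}$, the complementary-covering property gives $\|W^{-\frac1p}(x)A_{Q^{(\ell)}}\|^p \leq C_1 e^M$ a.e. Telescoping along the chain $Q_0 = Q^{(0)}\supset\cdots\supset Q^{(\ell)}\ni x$, combined with Lemma \ref{exchange}, yields
\begin{align*}
\|W^{-\frac1p}(x)A_{Q_0}\|^p
\leq \|W^{-\frac1p}(x)A_{Q^{(\ell)}}\|^p \prod_{i=0}^{\ell-1}\|A_{Q^{(i)}}A_{Q^{(i+1)}}^{-1}\|^p
\leq (C_1 e^M)^{\ell+1}.
\end{align*}
Summing over $\ell$,
\begin{align*}
\fint_{Q_0}\|W^{-\frac1p}(x)A_{Q_0}\|^u\,dx \leq \sum_{\ell\geq 0} \epsilon^\ell (C_1 e^M)^{u(\ell+1)/p},
\end{align*}
which converges (uniformly in $Q_0$) as soon as $u/p < \log(1/\epsilon)/\log(C_1 e^M)$. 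The main obstacle is arranging the \emph{upper} bound $\|A_Q A_{Q'}^{-1}\|^p \leq C_1 e^M$ in the stopping criterion: for $A_{p,\infty}$-matrix weights the parent-child doubling of reducing operators can fail, so dyadic maximality alone would provide only the lower bound $\geq e^M$. Using continuously-adjusted, non-dyadic stopping cubes -- still compatible with Lemma \ref{8 prepare}, which imposes no dyadic requirement -- overcomes this.
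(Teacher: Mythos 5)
Your ``if'' direction is correct, though longer than needed: the paper simply applies the pointwise inequality $\log_+ t \leq (eu)^{-1}t^u$ and uses \eqref{WAs} directly to get condition \eqref{W4b} of Proposition \ref{compare}, whereas you detour through Jensen, the splitting $\log_+t=\log t+\log_+(t^{-1})$, and a Cauchy--Schwarz lower bound. Your argument is valid, just more elaborate.

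The ``only if'' direction has a genuine gap, and you put your finger near it yourself: stopping directly on $\|A_Q A_{Q'}^{-1}\|^p$ forces you to control this quantity from both above and below, which dyadic maximality does not deliver for $A_{p,\infty}$ weights. The proposed repair via ``continuously-adjusted non-dyadic stopping cubes'' plus Vitali does not close the hole. First, $Q'\mapsto\|A_Q A_{Q'}^{-1}\|^p$ is not a well-defined continuous function of $Q'$: reducing operators are only determined up to the two-sided equivalence in \eqref{equ_reduce}, with constants depending on $m$ and $p$, so the claimed exact interpolation into a window $[e^M, C_1e^M]$ is not available without a canonical choice and a nontrivial continuity lemma, neither of which the paper supplies. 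Second, and more seriously, a Vitali selection yields a disjoint subfamily whose \emph{dilates} cover the bad set, not the cubes themselves; but your telescoping and recursion need the disjoint stopping cubes to cover the bad set exactly, so that the complement of $\bigcup_j Q'_{x_j}$ in $Q$ is genuinely good. Points of the bad set in $\bigl(\bigcup_j 5Q'_{x_j}\bigr)\setminus\bigcup_j Q'_{x_j}$ are never recursed into, and the generation-by-generation iteration does not close.

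The paper avoids both problems with a \emph{scalar} dyadic stopping rule: take the maximal dyadic subcubes $Q_i\subset Q$ with $\fint_{Q_i}\log_+\|W^{-1/p}(x)A_Q\|\,dx>2M$. These are automatically pairwise disjoint and, by Lebesgue differentiation, cover $\{x\in Q:\log_+\|W^{-1/p}(x)A_Q\|>2M\}$ up to null sets, while maximality gives the upper bound $\fint_{Q_i}\log_+\|W^{-1/p}(x)A_Q\|\,dx\leq 2^{n+1}M$ for free via the dyadic parent. The step missing from your outline is that the bound $\log_+\|A_{Q_i}^{-1}A_Q\|\leq\widetilde M$ is then \emph{derived} a posteriori from this averaged stopping bound: one averages the pointwise estimate $\log_+\|A_{Q_i}^{-1}A_Q\|\leq\log_+\|A_{Q_i}^{-1}W^{1/p}(y)\|+\log_+\|W^{-1/p}(y)A_Q\|$ over $y\in Q_i$, handling the first term by Lemma \ref{reduceM} and the second by the stopping upper bound. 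Trying to impose control on $\|A_Q A_{Q_i}^{-1}\|$ a priori, as part of the stopping criterion, is exactly what creates the covering obstruction.
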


\begin{proof}
We prove that \eqref{WAs} is equivalent to condition \eqref{W4b} of Proposition \ref{compare},
which is equivalent to $W\in A_{p,\infty}$ by the said proposition.
From the elementary inequality $\log_+ t\leq (es)^{-1}t^s$ for any $t,s\in(0,\infty)$,
it is immediate that \eqref{WAs} implies condition \eqref{W4b} of Proposition \ref{compare}.

The converse implication is more delicate
and involves an argument somewhat in the spirit of the John--Nirenberg inequality
for the exponential integrability of BMO functions.
Condition \eqref{W4b} of Proposition \ref{compare} is clearly equivalent to
\begin{align*}
M:=\sup_{\mathrm{cube}\,Q\subset\mathbb{R}^n}
\fint_Q\log_+\left\|W^{-\frac{1}{p}}(x)A_Q\right\|\,dx<\infty.
\end{align*}
Let cube $Q\subset\mathbb R^n$ be fixed.
Without loss of generality, we may assume $Q$ is a dyadic cube.
Let $\{Q_i\}_{i\in I}\subset Q$ be its maximal dyadic subcubes such that,
for any $i\in I$,
\begin{align*}
\fint_{Q_i}\log_+\left\|W^{-\frac{1}{p}}(x)A_Q\right\|\,dx
>2M.
\end{align*}
This, combined with the definition of $M$, further implies that
\begin{align}\label{WAsSum}
\sum_{i\in I}|Q_i|
&<\frac{1}{2M}\sum_{i\in I}\int_{Q_i}\log_+\left\|W^{-\frac{1}{p}}(x)A_Q\right\|\,dx\\
&\leq\frac{1}{2M}\int_{Q}\log_+\left\|W^{-\frac{1}{p}}(x)A_Q\right\|\,dx
\leq\frac{1}{2M}M|Q|=\frac{1}{2}|Q|.\notag
\end{align}
For any $i\in I$, let $\widehat Q_i:=\min\{R\in\mathscr Q:\ Q_i\subsetneqq R\}$.
By the maximality, we obtain, for any $i\in I$,
\begin{align}\label{WAsMax}
\fint_{Q_i}\log_+\left\|W^{-\frac{1}{p}}(x)A_Q\right\|\,dx
\leq 2^n\fint_{\widehat Q_i}\log_+\left\|W^{-\frac{1}{p}}(x)A_Q\right\|\,dx
\leq 2^{n+1}M.
\end{align}
We can then write
\begin{align}\label{chara1}
\mathbf{1}_Q\log_+\left\|W^{-\frac{1}{p}}A_Q\right\|
=\mathbf{1}_{Q\setminus\bigcup_i Q_i}\log_+\left\|W^{-\frac{1}{p}}A_Q\right\|
+\sum_{i\in I}\mathbf{1}_{Q_i}\log_+\left\|W^{-\frac{1}{p}}A_Q\right\|.
\end{align}
From the definition of $\{Q_i\}_{i\in I}$ and
Lebesgue's differentiation theorem
(see, for instance, \cite[Corollary 2.1.16]{g14c}), we deduce that,
for any $x\notin\bigcup_{i\in I}Q_i$,
\begin{align}\label{chara2}
\log_+\left\|W^{-\frac{1}{p}}(x)A_Q\right\|
\leq2M.
\end{align}
On the other hand, for any $i\in I$ and $x\in Q_i$, we observe that
\begin{align*}
\log_+\left\|W^{-\frac{1}{p}}(x)A_Q\right\|
\leq\log_+\left\|W^{-\frac{1}{p}}(x)A_{Q_i}\right\|
+\log_+\left\|A_{Q_i}^{-1}A_Q\right\|,
\end{align*}
where, for any $y\in Q_i$,
\begin{align*}
\log_+\left\|A_{Q_i}^{-1}A_Q\right\|
\leq \log_+\left\|A_{Q_i}^{-1}W^{\frac1p}(y)\right\|+\log_+\left\| W^{-\frac1p}(y)A_Q\right\|.
\end{align*}
Taking the average over $y\in Q_i$
and using \eqref{WAsMax} for the second term, we obtain
\begin{align*}
\log_+\left\|A_{Q_i}^{-1}A_Q\right\|
\leq\fint_{Q_i}\log_+\left\|A_{Q_i}^{-1}W^{\frac1p}(y)\right\|\,dy+2^{n+1}M.
\end{align*}
Finally, we use again the elementary inequality $\log_+ t\leq (ep)^{-1}t^p$
for any $t,p\in(0,\infty)$ and Lemmas \ref{exchange} and \ref{reduceM} to find that
\begin{align*}
\fint_{Q_i}\log_+\left\|A_{Q_i}^{-1}W^{\frac1p}(y)\right\|\,dy
\lesssim\fint_{Q_i}\left\|A_{Q_i}^{-1}W^{\frac1p}(y)\right\|^p\,dy
\sim \left\|A_{Q_i}^{-1}A_{Q_i}\right\|^p=1.
\end{align*}
Combining the previous few estimates, we have checked that
there exists a positive constant $\widetilde{M}$ such that,
for any $i\in I$ and $x\in Q_i$,
\begin{align*}
\log_+\left\|W^{-\frac{1}{p}}A_Q\right\|
\leq\log_+\left\|W^{-\frac{1}{p}}A_{Q_i}\right\|+\widetilde{M}.
\end{align*}
This, together with \eqref{chara1} and \eqref{chara2}, further implies that
\begin{align}\label{4.5x}
\mathbf{1}_{Q}\log_+\left\|W^{-\frac{1}{p}}A_Q\right\|
&\leq2M\mathbf{1}_{Q\setminus\bigcup_{i\in I}Q_i}
+\sum_{i\in I}\mathbf{1}_{Q_i}
\left(\log_+\left\|W^{-\frac{1}{p}}A_{Q_i}\right\|+\widetilde{M}\right)\\
&\leq C\mathbf{1}_{Q}
+\sum_{i\in I}\mathbf{1}_{Q_i}\log_+\left\|W^{-\frac{1}{p}}A_{Q_i}\right\|,\notag
\end{align}
where $C:=\max\{2M,\widetilde{M}\}$ and $\sum_{i\in I}|Q_i|\leq\frac12|Q|$ by \eqref{WAsSum}.

Each term on the right-hand side of \eqref{4.5x} has the same form as
the expression on the left-hand side,
and hence we can iterate this estimate. After infinitely many iterations, we arrive at
\begin{align*}
\mathbf{1}_{Q}\log_+\left\|W^{-\frac{1}{p}}A_Q\right\|
\leq C\sum_{k=0}^\infty\mathbf{1}_{\Omega_k}
=C\sum_{k=0}^\infty(k+1)\mathbf{1}_{\Omega_k\setminus\Omega_{k+1}},
\end{align*}
where $\Omega_0=Q$ and $\Omega_1=\bigcup_{i\in I}Q_i$
are the cubes constructed in the first step with $|\Omega_1|\leq\frac12|\Omega_0|$
and, in general, $\Omega_k\subset\Omega_{k-1}$ is a union of cubes
with $|\Omega_k|\leq\frac12|\Omega_{k-1}|\leq\cdots\leq 2^{-k}|Q|$.
From these, we infer that, if $u\in(0,\frac{\log 2}{C})$, then
\begin{align*}
\fint_Q\left\|W^{-\frac{1}{p}}(x)A_Q\right\|^u\,dx
&\leq\fint_Q\exp\left(u \log_+ \left\|W^{-\frac{1}{p}}(x)A_Q\right\|\right)\,dx \\
&\leq\fint_Q\sum_{k=0}^\infty e^{uC(k+1)}
\mathbf{1}_{\Omega_k\setminus\Omega_{k+1}}\,dx
\leq\sum_{k=0}^\infty e^{uC(k+1)}2^{-k}<\infty.
\end{align*}
This finishes the proof of Proposition \ref{WAsL}.
\end{proof}

Now, we can show two basic properties of
$A_p$-matrix weights and $A_{p,\infty}$-matrix weights.
Recall that, when $m=1$, we have $A_{p,\infty}(\mathbb R^n,\mathbb C)=A_\infty(\mathbb R^n)$
and $A_p(\mathbb R^n,\mathbb C)=A_{\max\{1,p\}}(\mathbb R^n)$;
hence part \eqref{Ap,infty subset union Aq} of the following proposition
recovers the classical identity
$A_\infty(\mathbb R^n)=\bigcup_{q\in[1,\infty)}A_q(\mathbb R^n)$ for scalar weights
and can be seen as a generalisation of this identity to the case of matrix weights.

\begin{proposition}\label{ap and ap,infty}
Let $p\in(0,\infty)$. Then the following two statements hold.
\begin{enumerate}[{\rm(i)}]
\item If $q\in(p,\infty)$, then $A_{p,\infty}(\mathbb R^n,\mathbb C^m)
\subset A_{q,\infty}(\mathbb R^n,\mathbb C^m)$.
Moreover, there exists a positive constant $C$,
depending only on $m$, $p$, and $q$, such that, for any matrix weight $W$,
\begin{align*}
[W]_{A_{q,\infty}(\mathbb R^n,\mathbb C^m)}
\leq C[W]_{A_{p,\infty}(\mathbb R^n,\mathbb C^m)}.
\end{align*}

\item\label{Ap,infty subset union Aq}
It holds that $A_p(\mathbb R^n,\mathbb C^m)
\subsetneqq A_{p,\infty}(\mathbb R^n,\mathbb C^m)
\subset\bigcup_{q\in(0,\infty)}A_q(\mathbb R^n,\mathbb C^m)$.
Moreover, there exists a positive constant $C$,
depending only on $m$ and $p$, such that, for any matrix weight $W$,
\begin{align*}
[W]_{A_{p,\infty}(\mathbb R^n,\mathbb C^m)}\leq C[W]_{A_p(\mathbb R^n,\mathbb C^m)},
\end{align*}
where $C=1$ when $p\in(0,1]$.
\end{enumerate}
\end{proposition}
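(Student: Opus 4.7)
The plan is to prove (i) via a pointwise matrix inequality comparing the integrands at different powers and to prove (ii) by combining the definition with the equivalent characterizations from Propositions~\ref{compare} and~\ref{WAsL}. For (i), the key tool is \emph{Cordes' inequality}: for positive semidefinite matrices $A$ and $B$ and any $r\in[0,1]$, $\|A^rB^r\|\leq\|AB\|^r$. Applying it pointwise with $A:=W^{1/p}(x)$, $B:=W^{-1/p}(y)$, and $r:=p/q\in(0,1]$ (valid since $p\leq q$) yields
\begin{equation*}
\|W^{1/q}(x)W^{-1/q}(y)\|^q\leq\|W^{1/p}(x)W^{-1/p}(y)\|^p.
\end{equation*}
Averaging in $x$ over $Q$, taking the logarithm, averaging in $y$ over $Q$, exponentiating, and passing to the supremum over cubes then directly gives $[W]_{A_{q,\infty}}\leq[W]_{A_{p,\infty}}$, so $C=1$ suffices.

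For (ii) in the case $p\in(0,1]$, the chain $\exp(\fint_Q\log f\,dy)\leq\fint_Q f\,dy\leq\operatorname{ess\,sup}_{y\in Q}f(y)$ applied to $f(y):=\fint_Q\|W^{1/p}(x)W^{-1/p}(y)\|^p\,dx$ gives $[W]_{A_{p,\infty}}\leq[W]_{A_p}$ directly with $C=1$. For $p\in(1,\infty)$, use condition~\ref{W4b} of Proposition~\ref{compare} together with Lemma~\ref{reduceM} to recast $[W]_{A_{p,\infty}}$ as a quantity comparable to $\sup_Q\exp(p\fint_Q\log_+\|W^{-1/p}(y)A_Q\|\,dy)$. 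The crucial quantitative input is the representation
\begin{equation*}
[W]_{A_p}\sim\sup_{\mathrm{cube}\,Q}\|A_Q\widetilde A_Q\|^p,
\end{equation*}
obtained by two successive applications of Lemma~\ref{reduceM} (first to $\widetilde W:=W^{-p'/p}$ of order $p'$ and then to $W$ of order $p$), together with Lemma~\ref{exchange}, starting from the definition of $[W]_{A_p}$; the constants here depend only on $m$ and $p$. This yields $\|\widetilde A_QA_Q\|\lesssim[W]_{A_p}^{1/p}$, and Lemma~\ref{reduceM} applied to $\widetilde W$ with $M:=A_Q$ then gives $(\fint_Q\|W^{-1/p}(y)A_Q\|^{p'}\,dy)^{1/p'}\lesssim[W]_{A_p}^{1/p}$. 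Combining H\"older's inequality, the estimate $\exp(\fint\log_+f)\leq 1+\fint f$ (from $\log_+t\leq\log(1+t)$ and Jensen), and the convexity bound $(1+t)^p\leq 2^{p-1}(1+t^p)$ then delivers the linear bound $[W]_{A_{p,\infty}}\leq C[W]_{A_p}$; here one needs $[W]_{A_p}\geq c(m,p)>0$ for any $W\in A_p$, which follows by symmetrizing the Cauchy--Schwarz bound $1\leq\|W^{1/p}(x)W^{-1/p}(y)\|\|W^{1/p}(y)W^{-1/p}(x)\|$.

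The strict inclusion $A_p\subsetneqq A_{p,\infty}$ is witnessed by $W:=|\cdot|^\alpha I_m$ with $\alpha$ chosen so that $|\cdot|^\alpha\in A_\infty\setminus A_{\max\{1,p\}}$ (possible by Lemma~\ref{example Ap}), combined with Lemma~\ref{w vs W}. For the containment $A_{p,\infty}\subset\bigcup_{q\in(0,\infty)}A_q$, Proposition~\ref{WAsL} supplies $u>0$ with $\sup_Q\fint_Q\|W^{-1/p}(y)A_Q\|^u\,dy<\infty$. Then, for any $q\geq 1+p/u$ (equivalently $pq'/q\leq u$), combining Cordes' inequality (which gives $\|W^{1/q}(x)W^{-1/q}(y)\|^{q'}\leq\|W^{1/p}(x)W^{-1/p}(y)\|^{pq'/q}$), the operator factorization $\|W^{1/p}(x)W^{-1/p}(y)\|\leq\|W^{1/p}(x)A_Q^{-1}\|\|A_QW^{-1/p}(y)\|$, and Lemma~\ref{reduceM} yields $[W]_{A_q}<\infty$. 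The main obstacle throughout is keeping the bound in the $p>1$ case of (ii) strictly \emph{linear} in $[W]_{A_p}$, which necessitates the representation $[W]_{A_p}\sim\sup_Q\|A_Q\widetilde A_Q\|^p$ with constants independent of $[W]_{A_p}$, rather than the weaker qualitative equivalence of Lemma~\ref{Ap dual}.
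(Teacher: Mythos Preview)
Your proof is correct. Parts (i), the strict inclusion, and the containment $A_{p,\infty}\subset\bigcup_q A_q$ follow essentially the same lines as the paper: the pointwise inequality $\|W^{1/q}(x)W^{-1/q}(y)\|^q\leq\|W^{1/p}(x)W^{-1/p}(y)\|^p$ is exactly what the paper cites from \cite{ipr21} as \eqref{2023.6.27} (and your explicit appeal to Cordes' inequality nicely identifies the constant as $1$), and both arguments for the union rely on Proposition~\ref{WAsL} combined with this inequality; you factor through $A_Q$ explicitly where the paper instead passes through the duality $[W]_{A_q}\sim[W^{-q'/q}]_{A_{q'}}^{q/q'}$, but these are equivalent reorganizations of the same computation.

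The genuine difference is in the $p>1$ case of $A_p\subset A_{p,\infty}$. The paper simply applies Jensen's inequality to the defining expression, arriving at $[W]_{A_{p,\infty}}\leq[W^{-p'/p}]_{A_{p'}}^{p/p'}$, and then invokes \eqref{dual matrix weight} of Lemma~\ref{Ap dual}. Your route through condition~\eqref{W4b} of Proposition~\ref{compare}, the representation $[W]_{A_p}\sim\sup_Q\|A_Q\widetilde A_Q\|^p$, Lemma~\ref{reduceM} for $\widetilde W$, and the auxiliary lower bound $[W]_{A_p}\geq c(m,p)$ is valid but considerably longer. Your stated concern---that Lemma~\ref{Ap dual} would only give a ``qualitative equivalence'' with constants depending on $[W]_{A_p}$---applies to \eqref{AQ inv}, not to \eqref{dual matrix weight}: the latter has equivalence constants depending only on $m$ and $p$, so the paper's short argument already yields the linear bound. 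What your approach buys is self-containment (it avoids the duality of $A_p$ altogether), at the cost of several extra steps.
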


\begin{proof}
We first prove (i).
Let us begin by recalling a useful estimate.
By the proof of \cite[Lemma 2]{ipr21}
(in which the two exponents, used in reverse roles in \cite[Lemma 2]{ipr21}
compared to the present situation, satisfy $1\leq p<q<\infty$,
but its proof still works for any $0<p<q<\infty$),
we obtain, for any $x\in\mathbb{R}^n$ and almost every $y\in\mathbb{R}^n$,
\begin{align}\label{2023.6.27}
\left\|W^{\frac{1}{q}}(x)W^{-\frac{1}{q}}(y)\right\|^q
\lesssim\left\|W^{\frac{1}{p}}(x)W^{-\frac{1}{p}}(y)\right\|^p.
\end{align}
This, combined with Definition \ref{def ap,infty}, further implies that
\begin{align*}
[W]_{A_{q,\infty}(\mathbb R^n,\mathbb C^m)}
\lesssim[W]_{A_{p,\infty}(\mathbb R^n,\mathbb C^m)}.
\end{align*}
This finishes the proof of (i).

Now, we show (ii). We first prove
$A_p(\mathbb R^n,\mathbb C^m)\subset A_{p,\infty}(\mathbb R^n,\mathbb C^m)$.
To this end, we consider the following two cases on $p$.

\emph{Case 1)} $p\in(0,1]$. In this case,
for any cube $Q\subset\mathbb{R}^n$ and almost every $y\in\mathbb{R}^n$, let
\begin{align*}
F(Q,y):=\fint_Q\left\|W^{\frac{1}{p}}(x)W^{-\frac{1}{p}}(y)\right\|^p\,dx.
\end{align*}
Then, for any cube $Q\subset\mathbb{R}^n$,
\begin{align*}
\exp\left(\fint_Q\log F(Q,y)\,dy\right)
\leq\mathop{\operatorname{ess\,sup}}_{y\in Q}F(Q,y)
\end{align*}
and hence $[W]_{A_{p,\infty}(\mathbb R^n,\mathbb C^m)}
\leq[W]_{A_p(\mathbb R^n,\mathbb C^m)}$.
This finishes the proof of $A_p(\mathbb R^n,\mathbb C^m)
\subset A_{p,\infty}(\mathbb R^n,\mathbb C^m)$ in this case.

\emph{Case 2)} $p\in(1,\infty)$. In this case, by Jensen's inequality, we conclude that
\begin{align*}
\exp\left(\fint_Q\log F(Q,y)\,dy\right)
=\left\{\exp\left(\fint_Q\log\left\{[F(Q,y)]^{\frac{p'}{p}}
\right\}\,dy\right)\right\}^{\frac{p}{p'}}
\leq\left\{\fint_Q[F(Q,y)]^{\frac{p'}{p}}\,dy\right\}^{\frac{p}{p'}},
\end{align*}
which, together with \eqref{dual matrix weight}, further implies that
\begin{align*}
[W]_{A_{p,\infty}(\mathbb R^n,\mathbb C^m)}
\leq\left[W^{-\frac{p'}{p}}\right]_{A_{p'}(\mathbb R^n,\mathbb C^m)}^{\frac{p}{p'}}
\sim[W]_{A_p(\mathbb R^n,\mathbb C^m)}.
\end{align*}
This finishes the proof of $A_p(\mathbb R^n,\mathbb C^m)
\subset A_{p,\infty}(\mathbb R^n,\mathbb C^m)$ in this case.

Next, we show $A_p(\mathbb R^n,\mathbb C^m)\neq A_{p,\infty}(\mathbb R^n,\mathbb C^m)$.
We choose $q\in(n(p-1)_+,\infty)$. From Lemma \ref{example Ap}, we deduce that
\begin{align*}
w(\cdot):=|\cdot|^q\in A_\infty(\mathbb R^n)\setminus A_{\max\{1,p\}}(\mathbb R^n).
\end{align*}
This, combined with Lemma \ref{w vs W}, further implies that
\begin{align*}
W:=wI_m\in A_{p,\infty}(\mathbb R^n,\mathbb C^m)\setminus A_p(\mathbb R^n,\mathbb C^m).
\end{align*}
This finishes the proof of $A_p(\mathbb R^n,\mathbb C^m)
\neq A_{p,\infty}(\mathbb R^n,\mathbb C^m)$.

Finally, we prove $A_{p,\infty}(\mathbb R^n,\mathbb C^m)
\subset\bigcup_{q\in(0,\infty)}A_q(\mathbb R^n,\mathbb C^m)$.
Let $W\in A_{p,\infty}(\mathbb R^n,\mathbb C^m)$.
By Proposition \ref{WAsL} and Lemmas \ref{exchange} and \ref{reduceM}, we find that
there exists $u\in(0,\infty)$ such that
\begin{align*}
\sup_{\mathrm{cube}\,Q\subset\mathbb{R}^n}
\fint_Q\left[\fint_Q\left\|W^{\frac{1}{p}}(x)W^{-\frac{1}{p}}(y)\right\|^p\,dx\right]^u\,dy<\infty.
\end{align*}
This, together with \eqref{dual matrix weight}, \eqref{2023.6.27},
and H\"older's inequality, further implies that,
for any $q\in(p,\infty)$ with $\frac{q'}q\leq u$,
\begin{align*}
[W]_{A_q(\mathbb R^n,\mathbb C^m)}
&\sim\left[W^{-\frac{q'}{q}}\right]_{A_{q'}(\mathbb R^n,\mathbb C^m)}^{\frac{q}{q'}}\\
&=\sup_{\mathrm{cube}\,Q\subset\mathbb{R}^n}
\left\{\fint_Q\left[\fint_Q\left\|W^{\frac{1}{q}}(x)W^{-\frac{1}{q}}(y)\right\|^q
\,dx\right]^{\frac{q'}q}\,dy\right\}^{\frac q{q'}}\\
&\lesssim\sup_{\mathrm{cube}\,Q\subset\mathbb{R}^n}
\left\{\fint_Q\left[\fint_Q\left\|W^{\frac{1}{p}}(x)W^{-\frac{1}{p}}(y)\right\|^p
\,dx\right]^{\frac{q'}q}\,dy\right\}^{\frac q{q'}}\\
&\leq\sup_{\mathrm{cube}\,Q\subset\mathbb{R}^n}
\left\{\fint_Q\left[\fint_Q\left\|W^{\frac{1}{p}}(x)W^{-\frac{1}{p}}(y)\right\|^p
\,dx\right]^u\,dy\right\}^{\frac1u}
<\infty
\end{align*}
and hence $W\in A_q(\mathbb R^n,\mathbb C^m)$.
This finishes the proof of $A_{p,\infty}(\mathbb R^n,\mathbb C^m)
\subset\bigcup_{q\in(0,\infty)}A_q(\mathbb R^n,\mathbb C^m)$ and hence (ii),
which completes the proof of Proposition \ref{ap and ap,infty}.
\end{proof}

\section{Reverse H\"older inequalities}\label{sec:RHI}

One of the most important properties of classical $A_\infty$ weights is the reverse H\"older inequality that goes back to \cite{CF74}.
To establish a reverse H\"older inequality for $A_{p,\infty}$-matrix weights,
we need several technical lemmas.
The following conclusion is the reverse H\"older inequality of scalar weights
(see, for instance, \cite[Theorem 2.3]{hpr12} or \cite[Theorem 2.3(a)]{hp13}).

\begin{lemma}\label{reverse Holder scalar}
Let $w$ be a scalar weight satisfying
\begin{align*}
[w]_{A_\infty(\mathbb R^n)}^*:=\sup_{\mathrm{cube}\,Q}
\frac{1}{w(Q)}\int_Q\mathcal{M}(w\mathbf{1}_Q)(x)\,dx<\infty,
\end{align*}
where $\mathcal{M}$ is the same as in \eqref{maximal}.
Then, for any
\begin{align*}
r\in\left[1,1+\frac{1}{2^{n+1}[w]_{A_\infty(\mathbb R^n)}^*-1}\right]
\end{align*}
and any cube $Q\subset\mathbb{R}^n$,
\begin{equation*}
\fint_Q[w(x)]^r\,dx
\leq 2\left[\fint_Qw(x)\,dx\right]^r.
\end{equation*}
\end{lemma}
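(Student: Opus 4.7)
The plan is to follow the by-now standard route of Hyt\"onen--P\'erez--Rela \cite{hpr12,hp13}, which sharpens the Coifman--Fefferman reverse H\"older inequality through the Fujii--Wilson form of the $A_\infty$ characteristic. The approach is to combine a Calder\'on--Zygmund stopping-time decomposition at geometric levels with a layer-cake expression of $\int_Q w^r\,dx$, and to derive a self-improvement inequality whose self-referential term has coefficient strictly less than $1$ precisely when $r-1\le 1/(2^{n+1}[w]_{A_\infty(\mathbb R^n)}^*-1)$.

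First I would normalize by positive scaling so that $\fint_Q w = 1$; since $[w]_{A_\infty(\mathbb R^n)}^*$ is invariant under positive rescaling of $w$, the claim reduces to $\int_Q w^r\,dx \le 2|Q|$. Next I would introduce the dyadic maximal function $\phi$ of $w\mathbf{1}_Q$ over the dyadic grid induced by $Q$ and, for each $\lambda > 1$, perform the Calder\'on--Zygmund decomposition to obtain the maximal dyadic subcubes $\{Q_j^\lambda\}_j$ of $Q$ with $\fint_{Q_j^\lambda} w > \lambda$. Setting $\Omega_\lambda:=\bigcup_j Q_j^\lambda$, the defining inequality and maximality yield the key CZ estimates $\lambda|\Omega_\lambda| < w(\Omega_\lambda) \le 2^n\lambda|\Omega_\lambda|$ together with the identification $\Omega_\lambda=\{x\in Q:\phi(x)>\lambda\}$ up to null sets; moreover, since every ancestor of $Q_j^\lambda$ inside $Q$ has $w$-average at most $\lambda$, one checks that $\phi$ restricted to $Q_j^\lambda$ agrees with the dyadic maximal function of $w\mathbf{1}_{Q_j^\lambda}$.

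The hypothesis $[w]_{A_\infty(\mathbb R^n)}^*<\infty$ would then enter through the Fujii--Wilson bound $\int_P\mathcal{M}(w\mathbf{1}_P)\,dx\le [w]_{A_\infty(\mathbb R^n)}^*w(P)$ on every cube $P$, applied in particular to the stopping cubes $P=Q_j^\lambda$, together with the standard comparison $\phi\le c_n\mathcal{M}(w\mathbf{1}_Q)$. Combining this with the CZ upper bound $w(\Omega_\lambda)\le 2^n\lambda|\Omega_\lambda|$ and the layer-cake identity $\int_Q\phi^r\,dx=r\int_0^\infty\lambda^{r-1}|\Omega_\lambda|\,d\lambda$, I would bound the tail $\int_1^\infty\lambda^{r-1}|\Omega_\lambda|\,d\lambda$ in terms of $(r-1)[w]_{A_\infty(\mathbb R^n)}^*\int_Q\phi^r\,dx$, producing a self-improvement of the form
\[
\int_Q\phi^r\,dx\le |Q|+\theta\bigl(r,[w]_{A_\infty(\mathbb R^n)}^*\bigr)\int_Q\phi^r\,dx
\]
with $\theta<1$ in the admissible range of $r$. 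Absorbing the self-referential term, and using $\int_Q w^r\,dx\le\int_Q\phi^r\,dx$ from $w\le\phi$ almost everywhere, would yield $\int_Q w^r\,dx\le 2|Q|$.

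The main obstacle is quantitative bookkeeping. First, a priori finiteness of $\int_Q\phi^r\,dx$ must be ensured to make the absorption legitimate, which is handled by a standard truncation argument replacing $w$ by $\min\{w,N\}$ and letting $N\to\infty$. Second, and more delicately, the constants $\theta$ and $2$ must match the sharp thresholds of the statement: the factor $2^{n+1}$ in the denominator of the admissible range of $r$ and the constant $2$ on the right-hand side have to come out of a tight optimization of the CZ level parameter together with an exact use of the Fujii--Wilson bound, rather than from any soft dimensional argument. This sharpness is the raison d'\^etre of the lemma and explains why the specific Fujii--Wilson form $[w]_{A_\infty(\mathbb R^n)}^*$ is used rather than other equivalent expressions for the $A_\infty$ characteristic.
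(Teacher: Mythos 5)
The paper itself does not prove this lemma; it is quoted directly from \cite[Theorem 2.3]{hpr12} and \cite[Theorem 2.3(a)]{hp13}, so there is no in-paper proof to compare against. Your outline---normalization, localized dyadic Calder\'on--Zygmund stopping cubes, the identification of $\phi$ on a stopping cube $Q_j^\lambda$ with the dyadic maximal function of $w\mathbf 1_{Q_j^\lambda}$, the Fujii--Wilson bound applied on each stopping cube, a layer-cake identity, and absorption---does reproduce the skeleton of the argument in those references.

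There is, however, a genuine gap in the quantity you propose to bound. You aim to establish $\int_Q\phi^r\,dx\le 2|Q|$ (after normalizing $\fint_Qw=1$) and then conclude via $w\le\phi$. But this intermediate inequality is false in general: already for $r=1$, the Fujii--Wilson characteristic gives only $\fint_Q\phi\,dx\le[w]^*_{A_\infty(\mathbb R^n)}$, and this is essentially attained. For instance, with $n=1$, $Q=[0,1]$, and $w$ close to $N\mathbf 1_{[0,1/N]}$, one has $\fint_Q w\sim 1$ but $\fint_Q\phi\,dx\sim\log N$, which can be made arbitrarily large. By Jensen, $\fint_Q\phi^r\,dx\ge\bigl(\fint_Q\phi\,dx\bigr)^r$, so $\int_Q\phi^r\,dx$ cannot be $\le 2|Q|$ regardless of how small $r-1>0$ is taken; correspondingly, the self-improvement $\int_Q\phi^r\,dx\le|Q|+\theta\int_Q\phi^r\,dx$ cannot hold with $\theta\le 1/2$ on the admissible range. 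The cited proofs instead estimate the \emph{intermediate} quantity $\int_Q w\,\phi^{\,r-1}\,dx$, which still dominates $\int_Q w^r\,dx$ by $w\le\phi$ a.e., but is strictly smaller than $\int_Q\phi^r\,dx$. Its Fubini representation $(r-1)\int_0^\infty\lambda^{r-2}\,w(\Omega_\lambda)\,d\lambda$ produces exactly $w(Q)=|Q|$ from the range $\lambda\in(0,1)$, and the tail is then closed with the Fujii--Wilson and Calder\'on--Zygmund estimates on the stopping cubes; this is where the sharp constants $2^{n+1}$ and $2$ emerge. You should redo the absorption with $\int_Q w\,\phi^{\,r-1}\,dx$ as the self-improving object; as written, your argument targets a quantity that does not satisfy the desired bound.
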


For any scalar weight $w$, from Lebesgue's differentiation theorem
(see, for instance, \cite[Corollary 2.1.16]{g14c}), we deduce that,
for any cube $Q\subset\mathbb{R}^n$,
\begin{align*}
\int_Q\mathcal{M}(w\mathbf{1}_Q)(x)\,dx
\geq\int_Qw(x)\,dx=w(Q)
\end{align*}
and hence $[w]_{A_\infty(\mathbb R^n)}^*\in[1,\infty]$.

The following lemma is just \cite[Proposition 2.2]{hp13}.

\begin{lemma}\label{2.19}
There exists a positive constant $\widetilde C$ such that, for any scalar weight $w$,
\begin{align*}
[w]_{A_\infty(\mathbb R^n)}^*\leq \widetilde C[w]_{A_\infty(\mathbb R^n)}.
\end{align*}
\end{lemma}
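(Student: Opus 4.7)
My plan is to reduce to a dyadic model, run a Calder\'on--Zygmund stopping-time on $w\mathbf{1}_Q$, and convert the geometric-mean form of $[w]_{A_\infty}$ into geometric decay of the level sets, in the spirit of the John--Nirenberg iteration that was already used in the proof of Proposition~\ref{WAsL}.

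First, I would replace the non-centered Hardy--Littlewood maximal function by a local dyadic maximal function. By a standard adjacent-dyadic-grid (three lattice) argument, there is a finite family of dyadic systems $\mathscr{D}_1,\ldots,\mathscr{D}_N$ such that, for any cube $Q$ and any $x\in Q$, $\mathcal{M}(w\mathbf{1}_Q)(x)\lesssim_n \max_j M^{d,j}_Q(w\mathbf{1}_Q)(x)$, where $M^{d,j}_Q$ is the dyadic maximal function restricted to cubes of $\mathscr{D}_j$ contained in the ancestor of $Q$ in $\mathscr{D}_j$. It is then enough to bound $\int_Q M^d_Q(w\mathbf{1}_Q)$ for one such grid, since the factor $N$ is dimensional.

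Next, I would run a principal-cubes stopping time. Starting from $\Omega_0:=\{Q\}$, I select at level $k$ the maximal dyadic subcubes $\{Q^k_i\}$ of the previous stopping cubes on which $\fint_{Q^k_i}w>2\fint_{\widehat{Q}^{\,k}_i}w$, and I set $\Omega_k:=\bigcup_i Q^k_i$. A layer-cake decomposition gives
\begin{equation*}
\int_Q M^d_Q(w\mathbf{1}_Q)\,dx
\lesssim \sum_{k\ge 0} 2^k\langle w\rangle_Q\,|\Omega_k|.
\end{equation*}
Provided I can show $|\Omega_k|\le C\eta^k|Q|$ with $\eta=\eta([w]_{A_\infty})<1/2$, summing the geometric series yields $\int_Q M(w\mathbf{1}_Q)\lesssim \langle w\rangle_Q|Q|=w(Q)$ times a factor controlled by $[w]_{A_\infty}$, which is exactly the claimed inequality.

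The geometric decay of $|\Omega_k|$ is the core step and the main obstacle. On each stopping cube $R$ of generation $k$ I need to bound the measure of the union of its children of generation $k+1$ by a fixed fraction of $|R|$. For this I would use the $A_\infty$ condition in its geometric-mean formulation, namely $\fint_R w\le[w]_{A_\infty}\exp(\fint_R\log w)$, together with Chebyshev, to show that the set $\{x\in R:w(x)>\Lambda\langle w\rangle_R\}$ has measure at most a fixed fraction of $|R|$ once $\Lambda$ is chosen suitably large depending on $[w]_{A_\infty}$: indeed, if this set occupied too large a portion of $R$, then $\fint_R\log w$ would be forced close to $\log\langle w\rangle_R$ on that portion but very negative elsewhere would be needed to reconcile the constraint, and a careful bookkeeping as in Proposition~\ref{WAsL} forbids this. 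Since the stopping children of $R$ live inside such a high-density set (their averages exceed $2\langle w\rangle_R$), this controls their total measure. The delicate point---and the only nontrivial work---is to carry out the bookkeeping so that the resulting constant $\widetilde C$ depends \emph{linearly} on $[w]_{A_\infty}$ rather than exponentially; this is what Hyt\"onen--P\'erez achieve in \cite[Proposition~2.2]{hp13}, and any softer version of the argument would still suffice to prove the qualitative inequality $[w]^*_{A_\infty}\le\widetilde C\,[w]_{A_\infty}$ needed here.
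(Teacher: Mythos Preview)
The paper does not prove this lemma; it merely records it as \cite[Proposition~2.2]{hp13}. So there is no in-paper argument to compare your sketch against---what you wrote is already more than the paper offers.

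That said, your sketch has a real gap at precisely the step you flag as the core. With stopping factor~$2$, the Calder\'on--Zygmund selection together with plain Chebyshev gives only $\sum_i|Q_i|<\tfrac12|R|$ for the children of each stopping cube~$R$ (from $2\langle w\rangle_R\sum_i|Q_i|<\sum_i w(Q_i)\le w(R)$), hence $|\Omega_k|\le 2^{-k}|Q|$, which is exactly the critical rate and leaves $\sum_k 2^k|\Omega_k|$ unbounded. Your heuristic for improving this does not work as stated: the Hru\v s\v cev inequality reads $\fint_R\log w\ge\log\langle w\rangle_R-\log[w]_{A_\infty}$, and a large high-density set $\{w>\Lambda\langle w\rangle_R\}$ only pushes $\fint_R\log w$ \emph{up}, so no contradiction arises. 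Note also that the lemma asserts a universal~$\widetilde C$, i.e.\ the \emph{linear} bound $[w]_{A_\infty}^*\lesssim[w]_{A_\infty}$; a ``softer qualitative version'' would only give $[w]_{A_\infty}<\infty\Rightarrow[w]_{A_\infty}^*<\infty$, which is strictly weaker than what is claimed.

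A route that does give the linear bound: from Hru\v s\v cev and $\log_+ t\le t$ deduce $\fint_Q|\log w-\log\langle w\rangle_Q|\lesssim 1+\log[w]_{A_\infty}$, so $\log w\in\mathrm{BMO}$ with norm $\lesssim 1+\log[w]_{A_\infty}$; John--Nirenberg then yields a reverse H\"older exponent $r=1+\varepsilon$ with $\varepsilon\sim(1+\log[w]_{A_\infty})^{-1}$, and combining H\"older, $\|\mathcal M\|_{L^r\to L^r}\lesssim(r-1)^{-1}$, and reverse H\"older gives $\int_Q\mathcal M(w\mathbf 1_Q)\lesssim(r-1)^{-1}w(Q)\lesssim(1+\log[w]_{A_\infty})\,w(Q)\lesssim[w]_{A_\infty}\,w(Q)$.
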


Next, we establish a useful relation
between scalar weights and matrix weights.

\begin{lemma}\label{2.14}
Let $p\in(0,\infty)$ and $W\in A_{p,\infty}(\mathbb R^n,\mathbb C^m)$.
Then, for any nonzero matrix $M\in M_m(\mathbb{C})$, the scalar function
\begin{align*}
w_M:=\left\|W^{\frac{1}{p}}M\right\|^p
\end{align*}
satisfies $w_M\in A_\infty(\mathbb R^n)$ with
$[w_M]_{A_\infty(\mathbb R^n)}\leq[W]_{A_{p,\infty}(\mathbb R^n,\mathbb C^m)}$,
and
\begin{align*}
[W]_{A_{p,\infty}(\mathbb R^n,\mathbb C^m)}^{\mathrm{sc}}
:=\sup_{M\in M_m(\mathbb C)\setminus\{O_m\}}[w_M]_{A_\infty(\mathbb R^n)}^*
\leq \widetilde C[W]_{A_{p,\infty}(\mathbb R^n,\mathbb C^m)},
\end{align*}
where $\widetilde C$ is the same as in Lemma \ref{2.19}.
\end{lemma}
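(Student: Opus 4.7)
The plan is to dispatch the two claims in sequence. The scalar $A_\infty$ characteristic used here is its logarithmic form $[w]_{A_\infty}=\sup_Q(\fint_Q w)\exp(-\fint_Q\log w)$, which is exactly the $m=1$ specialisation of Definition \ref{def ap,infty} (computed explicitly via $\log(w(x)/w(y))=\log w(x)-\log w(y)$). Thus the first claim amounts to comparing this quantity applied to $w_M$ with Definition \ref{def ap,infty} applied to $W$.

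The central ingredient is the pointwise submultiplicative estimate
\begin{equation*}
\left\|W^{\frac{1}{p}}(x)M\right\| \leq \left\|W^{\frac{1}{p}}(x)W^{-\frac{1}{p}}(y)\right\| \left\|W^{\frac{1}{p}}(y)M\right\|,
\end{equation*}
which holds for a.e.\ $y$ where $W(y)$ is invertible. Raising both sides to the $p$-th power, dividing by $\|W^{\frac{1}{p}}(y)M\|^p$ (positive a.e.\ since $M\neq O_m$ and $W(y)$ is invertible), averaging in $x$, taking logarithms, and then averaging in $y$ produces
\begin{equation*}
\log\left(\fint_Q w_M\right) - \fint_Q\log w_M(y)\,dy \leq \fint_Q\log\left(\fint_Q\left\|W^{\frac{1}{p}}(x)W^{-\frac{1}{p}}(y)\right\|^p\,dx\right)\,dy.
\end{equation*}
Exponentiating and taking the supremum over cubes $Q$ then yields $[w_M]_{A_\infty}\leq[W]_{A_{p,\infty}}$. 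Before this manipulation is legitimate I would verify $\log w_M\in L^1(Q)$: the upper bound on $\fint_Q\log w_M$ comes from Jensen's inequality together with the reducing-operator identity $\fint_Q\|W^{\frac{1}{p}}M\|^p\sim\|A_Q M\|^p$ of Lemma \ref{reduceM}, and the lower bound uses the submultiplicative estimate $\|A_Q M\|\leq\|A_Q W^{-\frac{1}{p}}(x)\|\|W^{\frac{1}{p}}(x)M\|$, Lemma \ref{exchange} to swap the factors in the first norm, and condition \eqref{W4b} of Proposition \ref{compare} to control $\fint_Q\log_+\|W^{-\frac{1}{p}}A_Q\|$.

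The second claim is then a short combination: Lemma \ref{2.19} supplies $[w_M]_{A_\infty}^*\leq\widetilde C[w_M]_{A_\infty}$, the first claim gives $[w_M]_{A_\infty}\leq[W]_{A_{p,\infty}}$, and taking the supremum over nonzero $M\in M_m(\mathbb C)$ delivers the asserted bound on $[W]_{A_{p,\infty}}^{\mathrm{sc}}$. The only real obstacle is bookkeeping: the core inequality is a one-line consequence of matrix submultiplicativity and Jensen's inequality, and the matrix-weighted $A_{p,\infty}$ hypothesis is genuinely used only through the integrability checks enabled by Proposition \ref{compare}.
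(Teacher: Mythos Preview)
Your proposal is correct and follows essentially the same approach as the paper: the core step is the submultiplicative bound $\|W^{1/p}(x)M\|^p\leq\|W^{1/p}(x)W^{-1/p}(y)\|^p\,w_M(y)$, averaged in $x$, logged, and averaged in $y$, followed by an application of Lemma~\ref{2.19}. The only minor difference is in the integrability check for $\log w_M$: the paper reads it off directly from the same submultiplicative inequality (rearranged to bound $\log_+(w_M^{-1})$ by the $L^1(Q)$ function appearing in Definition~\ref{def ap,infty}), whereas you route through the reducing operator and condition~\eqref{W4b} of Proposition~\ref{compare}; both are valid and amount to the same thing.
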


\begin{proof}
Let $M\in M_m(\mathbb{C})$ be nonzero.
By the definition of matrix weights, we find that $w_M$ is a scalar weight.
From Definition \ref{MatrixWeight}(ii), we infer that,
for any cube $Q\subset\mathbb{R}^n$, any $x\in Q$, and almost every $y\in Q$,
\begin{align*}
w_M(x)
\leq\left\|W^{\frac{1}{p}}(x)W^{-\frac{1}{p}}(y)\right\|^p
\left\|W^{\frac{1}{p}}(y)M\right\|^p
\end{align*}
and hence
\begin{align*}
\fint_Qw_M(x)\,dx
\leq\fint_Q\left\|W^{\frac{1}{p}}(x)W^{-\frac{1}{p}}(y)\right\|^p\,dx
\left\|W^{\frac{1}{p}}(y)M\right\|^p,
\end{align*}
which further implies that
\begin{align*}
\log\left(\fint_Qw_M(x)\,dx\right)+\log\left\{[w_M(y)]^{-1}\right\}
\leq\log\left(\fint_Q\left\|W^{\frac{1}{p}}(x)W^{-\frac{1}{p}}(y)\right\|^p\,dx\right).
\end{align*}
By this and $W\in A_{p,\infty}$,
we conclude that $\log_+(w_M^{-1})\in L^1_{\mathrm{loc}}$ and
\begin{align*}
[w_M]_{A_\infty(\mathbb R^n)}
&=\sup_{\mathrm{cube}\,Q}\fint_Qw_M(x)\,dx
\exp\left(\fint_Q\log\left\{[w_M(y)]^{-1}\right\}\,dy\right)\\
&\leq\sup_{\mathrm{cube}\,Q}\exp\left(\fint_Q\log\left(\fint_Q
\left\|W^{\frac{1}{p}}(x)W^{-\frac{1}{p}}(y)\right\|^p\,dx\right)\,dy\right)\\
&=[W]_{A_{p,\infty}(\mathbb R^n,\mathbb C^m)}.
\end{align*}
This, combined with Lemma \ref{2.19}, further implies that
$w_M\in A_\infty(\mathbb R^n)$ and
\begin{align*}
[W]_{A_{p,\infty}(\mathbb R^n,\mathbb C^m)}^{\mathrm{sc}}
\leq \widetilde C\sup_{M\in M_m(\mathbb C)\setminus\{O_m\}}[w_M]_{A_\infty(\mathbb R^n)}
\leq \widetilde C[W]_{A_{p,\infty}(\mathbb R^n,\mathbb C^m)},
\end{align*}
which completes the proof of Lemma \ref{2.14}.
\end{proof}

A definition similar in form to $[W]_{A_{p,\infty}(\mathbb R^n,\mathbb C^m)}^{\mathrm{sc}}$ can be found in
\cite[(1.5)]{nptv} or \cite[p.\,3087]{ipr21}:
For any $\vec z\in\mathbb{C}^m\setminus\{\vec{\mathbf{0}}\}$,
define the scalar weight $w_{\vec z}:=|W^{\frac{1}{p}}\vec z|^p$.
For any matrix weight $W$, define
\begin{align*}
\widetilde{[W]}_{A_{p,\infty}(\mathbb R^n,\mathbb C^m)}^{\mathrm{sc}}:=
\sup_{\vec z\in\mathbb{C}^m\setminus\{\vec{\mathbf{0}}\}}
\left[w_{\vec z}\right]_{A_\infty(\mathbb R^n)}^*.
\end{align*}
The following conclusion shows that $[W]_{A_{p,\infty}(\mathbb R^n,\mathbb C^m)}^{\mathrm{sc}}$
and $\widetilde{[W]}_{A_{p,\infty}(\mathbb R^n,\mathbb C^m)}^{\mathrm{sc}}$ are equivalent.

\begin{proposition}\label{2.18}
Let $p\in(0,\infty)$. Then there exists a positive constant $C$,
depending only on $m$ and $p$, such that, for any matrix weight $W$,
\begin{align}\label{2.18 equ}
\widetilde{[W]}_{A_{p,\infty}(\mathbb R^n,\mathbb C^m)}^{\mathrm{sc}}
\leq[W]_{A_{p,\infty}(\mathbb R^n,\mathbb C^m)}^{\mathrm{sc}}
\leq C\widetilde{[W]}_{A_{p,\infty}(\mathbb R^n,\mathbb C^m)}^{\mathrm{sc}}.
\end{align}
\end{proposition}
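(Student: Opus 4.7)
I will prove the two inequalities in \eqref{2.18 equ} separately. For the left-hand inequality $\widetilde{[W]}^{\mathrm{sc}}_{A_{p,\infty}} \leq [W]^{\mathrm{sc}}_{A_{p,\infty}}$, I plan to realize every vector-valued weight $w_{\vec z}$ as a matrix-valued weight $w_M$ for a suitable choice of $M$. Given a nonzero $\vec z \in \mathbb{C}^m$, take $M := [O_{m,m-1}\ \vec z] \in M_m(\mathbb C)$. Since the operator norm of a matrix with only one nonzero column equals the Euclidean length of that column, $\|W^{\frac{1}{p}}(x) M\| = |W^{\frac{1}{p}}(x) \vec z|$ for every $x$, and hence $w_M = w_{\vec z}$ as scalar weights. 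By the definition of $[W]^{\mathrm{sc}}_{A_{p,\infty}}$ as a supremum over nonzero $M$, this gives $[w_{\vec z}]^*_{A_\infty(\mathbb R^n)} \leq [W]^{\mathrm{sc}}_{A_{p,\infty}}$; taking the supremum over $\vec z$ yields the left-hand inequality with constant $1$.

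For the right-hand inequality, the strategy is to control $w_M$ pointwise by a sum of vector-valued weights indexed by the standard basis $\{\vec e_i\}_{i=1}^m$ of $\mathbb{C}^m$. The same Cauchy--Schwarz-type bound used in the proof of \eqref{W3}$\Rightarrow$\eqref{W4} of Proposition \ref{compare} gives, for any $A \in M_m(\mathbb C)$, the chain $\max_{1\leq i\leq m}|A\vec e_i|^p \leq \|A\|^p \leq m^{p/2}\sum_{i=1}^m|A\vec e_i|^p$. Applied pointwise with $A = W^{\frac{1}{p}}(x) M$, this yields $w_M \sim \sum_{i=1}^m w_{M\vec e_i}$ pointwise, with equivalence constants depending only on $m$ and $p$.

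Two elementary properties of the Fujii--Wilson constant $[\cdot]^*_{A_\infty(\mathbb R^n)}$ then finish the proof. First, pointwise equivalent weights $w \sim w'$ have comparable $[\cdot]^*_{A_\infty(\mathbb R^n)}$, which is immediate from the definition since both $w(Q)$ and $\mathcal{M}(w\mathbf{1}_Q)$ change only by constants. Second, by the sublinearity of $\mathcal M$ together with the elementary bound $\sum_i a_i b_i \leq (\max_i a_i)(\sum_i b_i)$ applied to $a_i = [w_i]^*_{A_\infty(\mathbb R^n)}$ and $b_i = w_i(Q)$, one obtains $[\sum_{i=1}^m w_i]^*_{A_\infty(\mathbb R^n)} \leq \max_{1\leq i\leq m}[w_i]^*_{A_\infty(\mathbb R^n)}$. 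Combining these with the pointwise comparison above yields $[w_M]^*_{A_\infty(\mathbb R^n)} \lesssim \max_i [w_{M\vec e_i}]^*_{A_\infty(\mathbb R^n)} \leq \widetilde{[W]}^{\mathrm{sc}}_{A_{p,\infty}}$, and the supremum over nonzero $M$ then gives the right-hand inequality.

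I do not anticipate any serious obstacle; the only minor technicality is the edge case of zero columns $M\vec e_i = \vec{\mathbf 0}$, which is harmless since such columns produce the zero weight and can simply be omitted from the sum and the maximum. The proof uses no ingredient beyond the definitions, basic properties of the operator norm, and the two elementary facts about $[\cdot]^*_{A_\infty(\mathbb R^n)}$ noted above.
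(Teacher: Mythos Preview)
Your proposal is correct and follows essentially the same approach as the paper: the first inequality via $M=[O_{m,m-1}\ \vec z]$ is identical, and for the second inequality both you and the paper use the pointwise equivalence $w_M\sim\sum_i w_{M\vec e_i}$, the sublinearity of $\mathcal M$, and the bound $\sum_i a_i b_i\leq(\max_i a_i)\sum_i b_i$ with $a_i=[w_{M\vec e_i}]^*_{A_\infty}$ and $b_i=w_{M\vec e_i}(Q)$. Your packaging into two general facts about $[\cdot]^*_{A_\infty}$ is a slight presentational difference, but the computation is the same.
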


\begin{proof}
We first show the first inequality of \eqref{2.18 equ}.
Let $\vec z\in\mathbb{C}^m\setminus\{\vec{\mathbf{0}}\}$ and
$M_{\vec z}:=[O_{m,m-1}\ \vec z]$.
Then $M_{\vec z}\in M_m(\mathbb{C})$ is nonzero and
from \eqref{n1}, we deduce that, for any $x\in\mathbb R^n$,
\begin{align*}
w_{M_{\vec z}}(x)
=\left\|\left[O_{m,m-1}\ W^{\frac{1}{p}}(x)\vec z\right]\right\|^p
=\left|W^{\frac{1}{p}}(x)\vec z\right|^p
=w_{\vec z}(x).
\end{align*}
This, together with the definition of
$[W]_{A_{p,\infty}(\mathbb R^n,\mathbb C^m)}^{\mathrm{sc}}$, further implies that
\begin{align*}
\left[w_{\vec z}\right]_{A_\infty(\mathbb R^n)}^*
=\left[w_{M_{\vec z}}\right]_{A_\infty(\mathbb R^n)}^*
\leq[W]_{A_{p,\infty}(\mathbb R^n,\mathbb C^m)}^{\mathrm{sc}},
\end{align*}
which completes the proof of the first inequality of \eqref{2.18 equ}.

Next, we prove the second inequality of \eqref{2.18 equ}.
Let $M\in M_m(\mathbb{C})$ be nonzero.
Let $\{\vec e_i\}_{i=1}^m$ be any orthonormal basis of $\mathbb C^m$.
From [86, Lemma 3.2], we infer that, for any matrix $A\in M_m(\mathbb{C})$,
\begin{align*}
\|A\|\sim\left(\sum_{i=1}^m\left|A\vec e_i\right|^p\right)^{\frac{1}{p}}.
\end{align*}
This, combined with the definitions of $[w]_{A_p(\mathbb R^n)}^*$
and $\widetilde{[W]}_{A_{p,\infty}(\mathbb R^n,\mathbb C^m)}^{\mathrm{sc}}$,
further implies that, for any cube $Q\subset\mathbb{R}^n$,
\begin{align*}
w_M(Q)
\sim\int_Q\sum_{i=1}^mw_{M\vec e_i}(x)\,dx
=\sum_{i=1}^mw_{M\vec e_i}(Q)
\end{align*}
and
\begin{align*}
\int_Q\mathcal{M}(w_M\mathbf{1}_Q)(x)\,dx
&\sim\int_Q\mathcal{M}\left(\sum_{i=1}^m w_{M\vec e_i}\mathbf{1}_Q\right)(x)\,dx\\
&\leq\int_Q\sum_{i=1}^m\mathcal{M}(w_{M\vec e_i}\mathbf{1}_Q)(x)\,dx\\
&\leq\sum_{i=1}^m\left[w_{M\vec e_i}\right]_{A_\infty(\mathbb R^n)}^*w_{M\vec e_i}(Q)\\
&\leq\widetilde{[W]}_{A_{p,\infty}(\mathbb R^n,\mathbb C^m)}^{\mathrm{sc}}
\sum_{i=1}^mw_{M\vec e_i}(Q),
\end{align*}
where $[w_{M\vec e_i}]_{A_\infty(\mathbb R^n)}^*:=0$ if $M\vec e_i=\vec{\mathbf{0}}$.
Therefore, $[w_M]_{A_\infty(\mathbb R^n)}^*
\lesssim\widetilde{[W]}_{A_{p,\infty}(\mathbb R^n,\mathbb C^m)}^{\mathrm{sc}}$.
This finishes the proof of Proposition \ref{2.18}.
\end{proof}

The following conclusion is a simple application of Lemma \ref{2.14}.

\begin{proposition}\label{constant}
Let $p\in(0,\infty)$. Then
\begin{enumerate}[\rm(i)]
\item for any $W\in A_{p,\infty}(\mathbb R^n,\mathbb C^m)$, one has
$[W]_{A_{p,\infty}(\mathbb R^n,\mathbb C^m)}\in[1,\infty)$;
\item for any $W\in A_p(\mathbb R^n,\mathbb C^m)$, one has
$[W]_{A_p(\mathbb R^n,\mathbb C^m)}\in[1,\infty)$.
\end{enumerate}
\end{proposition}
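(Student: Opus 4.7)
The finiteness parts $[W]_{A_{p,\infty}}<\infty$ and $[W]_{A_p}<\infty$ are immediate from the respective membership hypotheses, so the substance of the proposition lies in the lower bound $[W]_\cdot\geq 1$. The strategy is a reduction to the classical scalar lower bounds via the ``scalar shadow weight'' $w_M:=\|W^{\frac1p}M\|^p$, where $M\in M_m(\mathbb C)$ is any fixed nonzero matrix (say $M=I_m$). This $w_M$ is a scalar weight: positivity a.e.\ follows from the invertibility of $W$, and local integrability from $w_M\leq\|M\|^p\operatorname{tr}(W)$.

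For (i), Lemma \ref{2.14} directly produces $w_M\in A_\infty(\mathbb R^n)$ with $[w_M]_{A_\infty}\leq[W]_{A_{p,\infty}}$. Combining this with the classical scalar fact $[w]_{A_\infty(\mathbb R^n)}\geq 1$---itself an immediate consequence of Jensen's inequality $\exp(\fint_Q\log w)\leq\fint_Q w$---yields $[W]_{A_{p,\infty}}\geq 1$.

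For (ii), the plan is to establish the analogous scalar dominance $[w_M]_{A_{\max\{1,p\}}(\mathbb R^n)}\leq[W]_{A_p}$ at the $A_p$ level. This follows from the pointwise multiplicative comparison
\begin{align*}
\frac{w_M(x)}{w_M(y)}=\frac{\|W^{\frac1p}(x)M\|^p}{\|W^{\frac1p}(y)M\|^p}\leq\left\|W^{\frac1p}(x)W^{-\frac1p}(y)\right\|^p,
\end{align*}
itself a consequence of the factorization $W^{\frac1p}(x)M=W^{\frac1p}(x)W^{-\frac1p}(y)\cdot W^{\frac1p}(y)M$ and submultiplicativity of $\|\cdot\|$. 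Inserting this into the scalar $A_{\max\{1,p\}}$ characteristic of $w_M$---with $\operatorname{ess\,inf}_Q w_M$ in the denominator when $p\in(0,1]$, or the dual average $(\fint_Q w_M^{-p'/p}\,dy)^{p/p'}$ when $p\in(1,\infty)$---and comparing to the matrix $A_p$ characteristic of $W$ gives the desired dominance. The classical scalar bounds $[w]_{A_{\max\{1,p\}}(\mathbb R^n)}\geq 1$, immediate from Lebesgue differentiation when $p\in(0,1]$ and from H\"older's identity $1=\fint_Q w^{1/p}w^{-1/p}$ raised to the $p$-th power when $p\in(1,\infty)$, then deliver $[W]_{A_p}\geq 1$. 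I foresee no substantive obstacle: the only nontrivial step is the pointwise comparison, which is immediate from matrix multiplication, so everything reduces to invoking well-known scalar facts.
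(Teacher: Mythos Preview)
Your proof is correct. Part~(i) is identical to the paper's argument. For part~(ii), the paper also relies on the same pointwise comparison $\|W^{1/p}(x)M\|\leq\|W^{1/p}(x)W^{-1/p}(y)\|\,\|W^{1/p}(y)M\|$, but organizes it differently: when $p\in(1,\infty)$ it writes out your scalar-shadow computation with the specific choice $M=I_m$, while when $p\in(0,1]$ it takes a small detour through the already-proved part~(i) together with the inequality $[W]_{A_{p,\infty}}\leq[W]_{A_p}$ from Proposition~\ref{ap and ap,infty}(ii). Your unified treatment via $[w_M]_{A_{\max\{1,p\}}}\leq[W]_{A_p}$ for all $p$ is a bit cleaner in that it avoids this detour and makes the mechanism (reduction to classical scalar lower bounds) transparent in both regimes; the paper's route has the minor advantage of reusing results already in hand. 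Substantively the two arguments are the same.
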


\begin{proof}
Let $M\in M_m(\mathbb C)\setminus\{O_m\}$.
Then, by Lemma \ref{2.14} and \cite[Proposition 7.3.2(4)]{g14c}, we find that,
for any $W\in A_{p,\infty}(\mathbb R^n,\mathbb C^m)$,
\begin{align*}
[W]_{A_{p,\infty}(\mathbb R^n,\mathbb C^m)}
\geq[w_M]_{A_\infty(\mathbb R^n)}
\geq1.
\end{align*}
This finishes the proof of (i).

Now, we show (ii).
Let $W\in A_p(\mathbb R^n,\mathbb C^m)$.
Using (i) and Proposition \ref{ap and ap,infty}(ii), we obtain, if $p\in(0,1]$, then
\begin{align*}
[W]_{A_p(\mathbb R^n,\mathbb C^m)}
\geq[W]_{A_{p,\infty}(\mathbb R^n,\mathbb C^m)}
\geq1.
\end{align*}
On the other hand, if $p\in(1,\infty)$, then, from H\"older's inequality, it follows that,
for any cube $Q\subset\mathbb{R}^n$ and $x\in Q$,
\begin{align*}
\left\|W^{\frac{1}{p}}(x)\right\|
&\leq\fint_Q\left\|W^{\frac{1}{p}}(x)W^{-\frac{1}{p}}(y)\right\|
\left\|W^{\frac{1}{p}}(y)\right\|\,dy\\
&\leq\left[\fint_Q\left\|W^{\frac{1}{p}}(x)W^{-\frac{1}{p}}(y)\right\|^{p'}\,dy\right]^{\frac{1}{p'}}
\left[\fint_Q\left\|W^{\frac{1}{p}}(y)\right\|^p\,dy\right]^{\frac{1}{p}}
\end{align*}
and hence
\begin{align*}
\fint_Q\left\|W^{\frac{1}{p}}(x)\right\|^p\,dx
\leq[W]_{A_p(\mathbb R^n,\mathbb C^m)}\fint_Q\left\|W^{\frac{1}{p}}(y)\right\|^p\,dy.
\end{align*}
This further implies that $[W]_{A_p(\mathbb R^n,\mathbb C^m)}\geq1$,
which completes the proof of (ii) and hence Proposition \ref{constant}.
\end{proof}

Using Lemma \ref{2.14} and the reverse H\"older inequality of scalar weights,
we obtain the following reverse H\"older inequality of $A_{p,\infty}$-matrix weights.
The case of $A_p$-matrix weights is well known, and the present proof is analogous; both results are based on the facts that the inequality holds for $A_{p,\infty}^{\mathrm{sc}}$-matrix weights (cf. \cite[Lemma 2]{MRR22}), and that both $A_p$ and $A_{p,\infty}$ are contained in $A_{p,\infty}^{\mathrm{sc}}$.

\begin{proposition}\label{reverse Holder matrix}
Let $p\in(0,\infty)$, $W\in A_{p,\infty}(\mathbb R^n,\mathbb C^m)$, and $M\in M_m(\mathbb{C})$.
Then, for any
\begin{align}\label{rr}
r\in\left[1,1+\frac{1}{2^{n+1}[W]_{A_{p,\infty}(\mathbb R^n,\mathbb C^m)}^{\mathrm{sc}}-1}\right]
\end{align}
and any cube $Q\subset\mathbb{R}^n$,
\begin{align*}
\fint_Q\left\|W^{\frac{1}{p}}(x)M\right\|^{pr}\,dx
\leq2\left[\fint_Q\left\|W^{\frac{1}{p}}(x)M\right\|^{p}\,dx\right]^r.
\end{align*}
\end{proposition}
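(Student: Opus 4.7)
The plan is to reduce the matrix-valued reverse Hölder inequality to the scalar reverse Hölder inequality of Lemma \ref{reverse Holder scalar}, applied to the scalar weight $w_M:=\|W^{\frac{1}{p}}M\|^p$ from Lemma \ref{2.14}. The key observation is that the quantity appearing on both sides of the proposed inequality is literally $\fint_Q w_M^{\,r}$ and $\fint_Q w_M$, so once we know $w_M\in A_\infty(\mathbb R^n)$ with a good enough bound on $[w_M]_{A_\infty(\mathbb R^n)}^*$, the scalar result will give exactly what we want.

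First I would dispose of the trivial case $M=O_m$, where both sides vanish and the inequality holds automatically. For nonzero $M$, Lemma \ref{2.14} tells us that $w_M:=\|W^{\frac{1}{p}}M\|^p$ is a scalar $A_\infty$-weight with
\begin{equation*}
[w_M]_{A_\infty(\mathbb R^n)}^*\leq[W]_{A_{p,\infty}(\mathbb R^n,\mathbb C^m)}^{\mathrm{sc}},
\end{equation*}
by the very definition of $[W]_{A_{p,\infty}(\mathbb R^n,\mathbb C^m)}^{\mathrm{sc}}$ as a supremum over nonzero $M$.

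Next, I would invoke Lemma \ref{reverse Holder scalar} applied to $w_M$ on the cube $Q$. Since the function $s\mapsto 1+\frac{1}{2^{n+1}s-1}$ is decreasing in $s$ (for $s>2^{-(n+1)}$), the bound on $[w_M]_{A_\infty(\mathbb R^n)}^*$ just stated shows that the admissible range given in Lemma \ref{reverse Holder scalar} for the scalar weight $w_M$ \emph{contains} the range \eqref{rr} of the matrix statement. Hence, for every $r$ in \eqref{rr},
\begin{equation*}
\fint_Q[w_M(x)]^r\,dx\leq 2\left[\fint_Q w_M(x)\,dx\right]^r,
\end{equation*}
which, upon unwinding the definition of $w_M$, is exactly the claimed inequality.

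There is essentially no obstacle here beyond carefully checking that the range \eqref{rr} is compatible with Lemma \ref{reverse Holder scalar}; the substantive matrix work was already done in Lemma \ref{2.14}, where the scalar $A_\infty$ control of $\|W^{\frac{1}{p}}M\|^p$ was established from the $A_{p,\infty}$ condition on $W$. Thus the whole argument is a short reduction to the scalar case.
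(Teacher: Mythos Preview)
Your proposal is correct and follows essentially the same approach as the paper: handle the trivial case $M=O_m$, then for nonzero $M$ apply Lemma~\ref{2.14} to obtain $[w_M]_{A_\infty(\mathbb R^n)}^*\leq[W]_{A_{p,\infty}(\mathbb R^n,\mathbb C^m)}^{\mathrm{sc}}$ and feed this into the scalar reverse H\"older inequality of Lemma~\ref{reverse Holder scalar}. Your explicit remark that $s\mapsto 1+\frac{1}{2^{n+1}s-1}$ is decreasing (ensuring the range \eqref{rr} is admissible) is a detail the paper leaves implicit, but otherwise the arguments are identical.
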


\begin{proof}
If $M$ is a zero matrix, then the present lemma is obvious,
so it is enough to consider the nonzero matrix $M$.
By Lemma \ref{2.14}, we find that the scalar weight
\begin{align*}
w_M:=\left\|W^{\frac{1}{p}}M\right\|^p
\end{align*}
satisfies $w_M\in A_\infty(\mathbb R^n)$ and $[w_M]_{A_\infty(\mathbb R^n)}^*\leq[W]_{A_{p,\infty}(\mathbb R^n,\mathbb C^m)}^{\mathrm{sc}}$.
From this and Lemma \ref{reverse Holder scalar}, we infer that,
for any $r$ in \eqref{rr} and any cube $Q\subset\mathbb{R}^n$,
\begin{align*}
\fint_Q[w_M(x)]^r\,dx
\leq2\left[\fint_Qw_M(x)\,dx\right]^r.
\end{align*}
This finishes the proof of Proposition \ref{reverse Holder matrix}.
\end{proof}

The following corollary is also useful.

\begin{corollary}\label{8}
Let $p\in(0,\infty)$, $W\in A_{p,\infty}$,
and $\{A_Q\}_{Q\in\mathscr{Q}}$ be a sequence of
reducing operators of order $p$ for $W$.
Then the following statements hold.
\begin{enumerate}[{\rm(i)}]
\item There exists a positive constant $C$,
depending only on $m$ and $p$, such that, for any $r$ as in \eqref{rr},
\begin{align*}
\sup_{\mathrm{cube}\,Q}\left[\fint_Q
\left\|W^{\frac{1}{p}}(x)A_Q^{-1}\right\|^{pr}\,dx\right]^{\frac{1}{r}}
\leq C.
\end{align*}
\item There exists a positive constant $C$,
depending only on $m$, $p$, and $[W]_{A_{p,\infty}}$,
such that, for any $r$ as in \eqref{rr},
\begin{align*}
\sup_{Q\in\mathscr{Q}}\left[\fint_Q\sup_{R\in\mathscr{Q},\,x\in R\subset Q}
\left\|W^{\frac{1}{p}}(x)A_R^{-1}\right\|^{pr}\,dx\right]^{\frac{1}{r}}
\leq C.
\end{align*}
\end{enumerate}
\end{corollary}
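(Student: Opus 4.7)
The plan is, for part (i), to apply Proposition \ref{reverse Holder matrix} directly with $M := A_Q^{-1}$. By Lemma \ref{reduceM}, one has $\fint_Q \|W^{\frac1p}(x) A_Q^{-1}\|^p\,dx \sim \|A_Q A_Q^{-1}\|^p = 1$, so Proposition \ref{reverse Holder matrix} yields $\fint_Q \|W^{\frac1p}(x) A_Q^{-1}\|^{pr}\,dx \lesssim 1$ for every $r$ in the range \eqref{rr}, and taking the $r$-th root gives (i) with a constant depending only on $m$ and $p$.

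For part (ii), I would proceed via a Calder\'on--Zygmund iterative decomposition driven by Lemma \ref{8 prepare}. Fix a dyadic $Q\in\mathscr{Q}$, choose $\lambda$ large enough (essentially $\lambda = (C_0[W]_{A_{p,\infty}})^2$ with $C_0$ as in Lemma \ref{8 prepare}), and inductively build generations $\mathcal{F}_0 := \{Q\}$ and $\mathcal{F}_{i+1}$ the set of maximal dyadic proper subcubes $R$ of cubes $Q''\in\mathcal{F}_i$ with $\|A_{Q''}A_R^{-1}\|^p>\lambda$. Applying Lemma \ref{8 prepare} to each parent, $\sum_{R\in\mathcal{F}_{i+1}}|R|\leq\tfrac12\sum_{Q''\in\mathcal{F}_i}|Q''|$, so $\sum_{R\in\mathcal{F}_i}|R|\leq 2^{-i}|Q|$.

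The heart of the argument is the pointwise bound: for every $x\in Q$ and every dyadic $R$ with $x\in R\subset Q$, letting $Q^{(i)}(x)$ denote the cube of $\mathcal{F}_i$ that contains $x$ and $i^*$ the largest $i$ with $R\subseteq Q^{(i)}(x)$, the maximality in the CZ construction forces $\|A_{Q^{(i^*)}(x)}A_R^{-1}\|^p\leq\lambda$, hence
\begin{align*}
\sup_{\substack{R\in\mathscr{Q}\\ x\in R\subset Q}}\|W^{\frac1p}(x)A_R^{-1}\|^{pr}
\leq\lambda^r\sum_{i=0}^\infty\sum_{\substack{R'\in\mathcal{F}_i\\ R'\ni x}}\|W^{\frac1p}(x)A_{R'}^{-1}\|^{pr}.
\end{align*}
Integrating over $Q$, interchanging the sum and integral, and applying part (i) to each $R'\in\mathcal{F}_i$ to get $\int_{R'}\|W^{\frac1p}(x)A_{R'}^{-1}\|^{pr}\,dx\lesssim|R'|$ yields $\int_Q \sup \lesssim \lambda^r \sum_i 2^{-i} |Q| \lesssim \lambda^r|Q|$, which gives the desired bound with constant depending only on $m$, $p$, and $[W]_{A_{p,\infty}}$.

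The hard part will be the case analysis establishing $\|A_{Q^{(i^*)}(x)}A_R^{-1}\|^p\leq\lambda$ for arbitrary dyadic $R$: one must separately treat the case $Q^{(i^*+1)}(x)\subsetneq R\subseteq Q^{(i^*)}(x)$ (where the maximality of $Q^{(i^*+1)}(x)$ at generation $i^*+1$ excludes $R$ from having the large-norm property) and the case where no CZ cube at generation $i^*+1$ contains $x$ (where the absence of such a cube plays the same role, since $R$ would otherwise itself be captured by the CZ selection). Once this geometric bookkeeping is carried out, the rest reduces to routine geometric summation and an appeal to part (i).
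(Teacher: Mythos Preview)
Your proof is correct and follows essentially the same approach as the paper: part (i) is identical (Proposition \ref{reverse Holder matrix} with $M=A_Q^{-1}$ combined with Lemma \ref{reduceM}), and for part (ii) the paper simply invokes Lemma \ref{8 prepare} together with the argument of \cite[Lemma A.32]{byy} (see also \cite[Lemma 3.6]{im19}), which is precisely the Calder\'on--Zygmund stopping-time construction you outline. The case analysis you describe for establishing $\|A_{Q^{(i^*)}(x)}A_R^{-1}\|^p\leq\lambda$ is the standard one and goes through as written.
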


\begin{proof}
From Proposition \ref{reverse Holder matrix} and Lemma \ref{reduceM}, we deduce that,
for any $r$ in \eqref{rr}
and any cube $Q\subset\mathbb{R}^n$,
\begin{align*}
\left[\fint_Q\left\|W^{\frac{1}{p}}(x)A_Q^{-1}\right\|^{pr}\,dx\right]^{\frac{1}{r}}
<2\fint_Q\left\|W^{\frac{1}{p}}(x)A_Q^{-1}\right\|^{p}\,dx
\sim\left\|A_QA_Q^{-1}\right\|^{p}
=1.
\end{align*}
This finishes the proof of (i).

Applying Lemma \ref{8 prepare} and an argument similar to that used in
the proof of \cite[Lemma A.32]{byy} (see also \cite[Lemma 3.6]{im19}), we obtain (ii).
This finishes the proof of Corollary \ref{8}.
\end{proof}

We conclude this section by indicating an application of Corollary \ref{8}
to the boundedness of certain pointwise multipliers
acting on certain subspaces of $L^p\ell^q$,
the space of sequences $\{f_j\}_{j\in\mathbb Z}$
of measurable functions on $\mathbb R^n$ such that
\begin{equation*}
\left\|\{f_j\}_{j\in\mathbb Z}\right\|_{L^p\ell^q}
:=\left\|\left(\sum_{j\in\mathbb Z}|f_j|^q\right)^{\frac1q}
\right\|_{L^p}<\infty,
\end{equation*}
with usual modification if $q=\infty$.

For any $j\in\mathbb{Z}$, let $\mathscr Q_j$ be the collection of
dyadic cubes of edge length $2^{-j}$ and,
for any nonnegative measurable function $f$ on $\mathbb R^n$
or any $f\in L^1_{\mathrm{loc}}$, let
\begin{equation}\label{Ej}
E_j (f):=\sum_{Q\in\mathscr{Q}_j}
\left[\fint_Q f(x)\,dx\right]\mathbf{1}_Q
\end{equation}
be the related conditional expectation.
Repeating an argument used in the proof of \cite[Corollary 3.8]{fr21}
with \cite[Lemmas 3.2 and 3.3]{fr21} replaced by Corollary \ref{8},
we obtain the following conclusion; we omit the details.

\begin{corollary}\label{46x}
Let $p\in(0,\infty)$, $q\in(0,\infty]$, $W\in A_{p,\infty}$,
and $\{A_Q\}_{Q\in\mathscr{Q}}$ be a sequence of
reducing operators of order $p$ for $W$.
For any $j\in\mathbb{Z}$, let
\begin{align*}
\gamma_j:=\sum_{Q\in\mathscr{Q}_j}
\left\|W^{\frac{1}{p}}A_Q^{-1}\right\|\mathbf{1}_Q.
\end{align*}
Then there exists a positive constant $C$ such that,
for any sequence $\{f_j\}_{j\in\mathbb{Z}}$ of
nonnegative measurable functions on $\mathbb R^n$
or $\{f_j\}_{j\in\mathbb{Z}}\subset L^1_{\mathrm{loc}}$,
$$
\left\|\left\{\gamma_jE_j\left(f_j\right)\right\}_{j\in\mathbb Z}\right\|_{L^p\ell^q}
\leq C\left\|\left\{E_j\left(f_j\right)\right\}_{j\in\mathbb Z}\right\|_{L^p\ell^q},
$$
where $E_j$ for any $j\in\mathbb Z$ is the same as in \eqref{Ej}.
\end{corollary}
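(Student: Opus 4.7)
The plan is to adapt the proof of \cite[Corollary 3.8]{fr21}, which establishes the analogous statement in the $A_p$-matrix weight setting, by systematically replacing the reverse Hölder estimates \cite[Lemmas 3.2 and 3.3]{fr21} used there with the $A_{p,\infty}$ analogues provided by Corollary \ref{8}. Since the argument in \cite{fr21} uses those lemmas as black boxes, no further ingredient is required beyond Corollary \ref{8} and standard dyadic/maximal function machinery. The proof splits naturally into three stages.

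\textbf{Stage 1 (single-index $L^p$ estimate).} Fix $j\in\mathbb Z$ and $Q\in\mathscr Q_j$. Since $E_j(f_j)$ is constant on $Q$ with value $\fint_Q f_j$, and $\gamma_j|_Q=\|W^{1/p}(\cdot)A_Q^{-1}\|$, Lemma \ref{reduceM} yields
\begin{align*}
\int_Q|\gamma_j E_j(f_j)|^p\,dx
=\left(\fint_Q f_j\right)^{\!p}\int_Q\|W^{1/p}(x)A_Q^{-1}\|^p\,dx
\sim\left(\fint_Q f_j\right)^{\!p}|Q|
=\int_Q|E_j(f_j)|^p\,dx.
\end{align*}
Summing over $Q\in\mathscr Q_j$ gives $\|\gamma_j E_j(f_j)\|_{L^p}\sim\|E_j(f_j)\|_{L^p}$ uniformly in $j$, settling the case $q=p$.

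\textbf{Stage 2 (exponent bump).} Corollary \ref{8}(i) supplies an exponent $r>1$ (depending only on $m$, $p$, and $[W]_{A_{p,\infty}}^{\mathrm{sc}}$) such that, for every $Q\in\mathscr Q_j$,
\begin{align*}
\fint_Q\gamma_j(x)^{pr}\,dx\leq C^{pr}.
\end{align*}
Repeating the computation of Stage 1 with $p$ replaced by $pr$ yields the single-index estimate $\|\gamma_j E_j(f_j)\|_{L^{pr}}\lesssim\|E_j(f_j)\|_{L^{pr}}$.

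\textbf{Stage 3 (vector-valued conclusion).} For general $q\in(0,\infty]$, we invoke the sharper supremum-form reverse Hölder, Corollary \ref{8}(ii), which controls in $L^{pr}$ the maximal reducing weight $\sup_{R\in\mathscr Q,\,x\in R\subset Q}\|W^{1/p}(x)A_R^{-1}\|$. Combining this with Hölder's inequality in the spirit of \cite[Corollary 3.8]{fr21} yields a pointwise domination of the form
\begin{align*}
\gamma_j(x)E_j(f_j)(x)\lesssim\bigl[\mathcal M_d\bigl(|E_j(f_j)|^s\bigr)(x)\bigr]^{1/s}
\end{align*}
for some $s>0$ that can be taken arbitrarily small (depending on $r$), where $\mathcal M_d$ denotes the dyadic Hardy--Littlewood maximal operator. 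Choosing $s\in(0,\min\{p,q\})$ and applying the Fefferman--Stein vector-valued maximal inequality to $\{|E_j(f_j)|^s\}_{j\in\mathbb Z}$ in the space $L^{p/s}\ell^{q/s}$ gives the desired bound
\begin{align*}
\|\{\gamma_j E_j(f_j)\}_{j\in\mathbb Z}\|_{L^p\ell^q}\lesssim\|\{E_j(f_j)\}_{j\in\mathbb Z}\|_{L^p\ell^q}.
\end{align*}

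\textbf{Main obstacle.} The delicate point is Stage 3: to accommodate arbitrary $p\in(0,\infty)$ and $q\in(0,\infty]$, one must choose the auxiliary exponent $s$ smaller than $\min\{p,q\}$, and this choice must be compatible with the reverse Hölder exponent $r$ (which is only slightly larger than $1$ when $[W]_{A_{p,\infty}}$ is large). The freedom provided by the supremum-form estimate Corollary \ref{8}(ii), as opposed to the cube-by-cube estimate Corollary \ref{8}(i), is precisely what enables the reduction to the Fefferman--Stein inequality; it is also the reason \cite[Lemma 3.3]{fr21} (which played the same role in \cite{fr21}) had to be upgraded, in our setting, to Corollary \ref{8}(ii). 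Once this pointwise bound is in place, the remainder of the argument is routine and is omitted, in accordance with the reference to \cite{fr21}.
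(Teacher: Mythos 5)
Your high-level plan matches the paper exactly: the paper's entire ``proof'' of Corollary \ref{46x} is a one-sentence pointer to \cite[Corollary 3.8]{fr21}, with \cite[Lemmas 3.2 and 3.3]{fr21} replaced by Corollary \ref{8}, and Stages 1 and 2 of your sketch are correct reformulations of the $A_{p,\infty}$ inputs. The problem is in Stage 3, where you assert the pointwise domination
$\gamma_j(x)E_j(f_j)(x)\lesssim[\mathcal M_d(|E_j(f_j)|^s)(x)]^{1/s}$.
This inequality is false. Since $E_j(f_j)$ is constant on the dyadic cube $Q\in\mathscr Q_j$ containing $x$, we have $[\mathcal M_d(|E_j(f_j)|^s)(x)]^{1/s}\geq[\fint_Q|E_j(f_j)|^s]^{1/s}=|E_j(f_j)(x)|$, and taking $E_j(f_j)=\mathbf 1_Q$ (which occurs for $f_j=\mathbf 1_Q$) turns your claim into $\gamma_j(x)\lesssim 1$ for all $x\in Q$. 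But $\gamma_j|_Q=\|W^{1/p}(\cdot)A_Q^{-1}\|$ is only controlled in $L^{pr}(Q)$ by Corollary \ref{8}, not in $L^\infty(Q)$; indeed for $W=|\cdot|^aI_m$ it is genuinely unbounded. So the factor $\gamma_j(x)$ cannot be absorbed pointwise and your domination cannot be what \cite[Corollary 3.8]{fr21} uses.

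The actual Frazier--Roudenko argument runs in the dual direction: one linearizes via $\|\{h_j\}\|_{L^p\ell^q}^t=\|\{|h_j|^t\}\|_{L^{p/t}\ell^{q/t}}$ for a small $t$, pairs against a nonnegative $\{\beta_j\}\subset L^{(p/t)'}\ell^{(q/t)'}$, writes $\sum_j\int\gamma_j^t\,|E_j(f_j)|^t\,\beta_j=\sum_j\sum_{Q\in\mathscr Q_j}|E_j(f_j)|_Q^t\int_Q\gamma_j^t\beta_j$, and estimates $\int_Q\gamma_j^t\beta_j$ by H\"older together with the $L^{pr}$ bound on $\gamma_j$ from Corollary \ref{8}. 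This produces the dyadic maximal function of a power of the \emph{dual} functions $\beta_j$, and it is there that Fefferman--Stein is applied. In other words, the reverse H\"older input and Fefferman--Stein both enter, but on the side of $\beta_j$, not as a pointwise bound for $\gamma_jE_j(f_j)$ against the original data. (Corollary \ref{8}(ii), the supremum form, is then what lets one run the localization/stopping-time steps uniformly over scales; cf.\ the proof of \cite[Lemma A.32]{byy} and \cite[Lemma 3.6]{im19}.) You should rework Stage~3 along these lines; as written, it contains a genuine error, not merely an omitted routine step.
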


Corollary \ref{46x} extends \cite[Corollary 3.8]{fr21}
from $A_p$ to the larger weight class $A_{p,\infty}$.
The said \cite[Corollary 3.8]{fr21} played a key role in the theory of
Triebel--Lizorkin spaces with $A_p$-matrix weights developed in \cite{fr21}
and in its extension to Besov-type and Triebel--Lizorkin-type spaces in \cite{bhyy}.
Corollary \ref{46x} will be similarly used in our follow-up work,
where this theory is extended to the larger class of $A_{p,\infty}$ weights.

\section{Upper and lower dimensions of $A_{p,\infty}$ weights}\label{sec:dimApInfty}

As observed in \cite{bhyy,fr21,ro04},
several results in the theory of matrix-weighted function spaces
depend at the technical level on the possibility of replacing a reducing operator $A_Q$
related to one cube $Q$ by a similar operator $A_R$ for a different cube $R$;
this, in turn, depends on estimates for the norms $\|A_QA_R^{-1}\|$.
Several results may be obtained abstractly, by postulating suitable upper bounds
for the said norms, but it is also of interest to be able to relate such bounds
to the $A_p$ or the $A_{p,\infty}$ conditions of the matrix weight $W$.
In \cite{bhyy}, we introduced the concept of $A_p$-dimension of a matrix weight
and obtained the sharp control of $\|A_QA_R^{-1}\|$ in terms of this quantity
when $W\in A_p$. Our goal in this section is to extend these ideas to
the larger class of $A_{p,\infty}$-matrix weights.

First, we give some useful equivalences among three quantities
on the composition of reducing operators and its related matrix weight.

\begin{proposition}\label{equivalent}
Let $p\in(0,\infty)$, $W\in A_{p,\infty}$,
and $\{A_Q\}_{\mathrm{cube}\,Q}$ be a family of
reducing operators of order $p$ for $W$.
Then, for all cubes $Q,R\subset\mathbb{R}^n$,
\begin{align*}
\left\|A_QA_R^{-1}\right\|^p
&\sim\fint_Q\exp\left(\fint_R\log\left[
\left\|W^{\frac{1}{p}}(x)W^{-\frac{1}{p}}(y)\right\|^p\right]\,dy\right)\,dx\\
&\sim\exp\left(\fint_R\log\left(\fint_Q
\left\|W^{\frac{1}{p}}(x)W^{-\frac{1}{p}}(y)\right\|^p\,dx\right)\,dy\right),
\end{align*}
where the positive equivalence constants
depend only on $m$, $p$, and $[W]_{A_{p,\infty}}$.
\end{proposition}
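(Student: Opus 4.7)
The plan is to establish both equivalences by reducing each expression to $\|A_QA_R^{-1}\|^p$ through a chain of applications of Lemmas \ref{exchange}, \ref{reduceM}, and \ref{reduceM inv}. Throughout, Lemma \ref{exchange} plays the auxiliary role of allowing commutation of positive-definite factors inside the operator norm (so that $\|W^{1/p}(x)W^{-1/p}(y)\|=\|W^{-1/p}(y)W^{1/p}(x)\|$, $\|A_R^{-1}W^{1/p}(x)\|=\|W^{1/p}(x)A_R^{-1}\|$, and $\|A_R^{-1}A_Q\|=\|A_QA_R^{-1}\|$); in each pair both factors are positive definite, so this is legitimate.

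For the equivalence of the second quantity with $\|A_QA_R^{-1}\|^p$, I would first use Lemma \ref{exchange} pointwise to swap the factors inside the inner norm. Then, for each fixed $x$ with $W^{1/p}(x)\neq O_m$ (which holds almost everywhere by Definition \ref{MatrixWeight}(ii)), Lemma \ref{reduceM inv} applied on the cube $R$ with $M:=W^{1/p}(x)$ rewrites the inner $\exp$-of-$\log$-average as $\|A_R^{-1}W^{1/p}(x)\|^p$. A further swap via Lemma \ref{exchange}, followed by Lemma \ref{reduceM} on $Q$ with $M:=A_R^{-1}$, then evaluates the outer $x$-average as $\|A_QA_R^{-1}\|^p$.

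For the equivalence of the third quantity with $\|A_QA_R^{-1}\|^p$, I would work in the opposite order. Lemma \ref{reduceM} applied on cube $Q$ with $M:=W^{-1/p}(y)$ reduces the inner $x$-average to $\|A_QW^{-1/p}(y)\|^p$, which equals $\|W^{-1/p}(y)A_Q\|^p$ by Lemma \ref{exchange}. Lemma \ref{reduceM inv} applied on cube $R$ with $M:=A_Q$ (nonzero since $A_Q$ is positive definite) then turns the outer $\exp$-of-$\log$-$y$-average into $\|A_R^{-1}A_Q\|^p$, and a final use of Lemma \ref{exchange} converts this to $\|A_QA_R^{-1}\|^p$.

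The one technical point requiring care is the passage from a pointwise equivalence $a(y)\sim b(y)$ to the corresponding equivalence $\exp(\fint_R\log a)\sim\exp(\fint_R\log b)$; this is valid provided $\log a$ and $\log b$ lie in $L^1(R)$, which is ensured by \eqref{reduceM inv equ1} in Lemma \ref{reduceM inv}. Beyond this integrability check, the argument is essentially bookkeeping, with the positive equivalence constants depending only on $m$, $p$, and $[W]_{A_{p,\infty}}$ through the constants supplied by Lemma \ref{reduceM inv}.
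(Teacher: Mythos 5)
Your argument is correct and follows essentially the same route as the paper: both equivalences are established by the same chain of applications of Lemmas \ref{reduceM}, \ref{exchange}, and \ref{reduceM inv}, merely traversed in the opposite direction (you work from the right-hand expressions toward $\|A_QA_R^{-1}\|^p$, whereas the paper starts from $\|A_QA_R^{-1}\|^p$). The integrability point you flag is correctly handled by Lemma \ref{reduceM inv}, just as the paper implicitly relies on it.
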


\begin{proof}
By Lemmas \ref{reduceM}, \ref{exchange}, and \ref{reduceM inv},
we conclude that, for any cubes $Q,R\subset\mathbb{R}^n$,
\begin{align*}
\left\|A_QA_R^{-1}\right\|^p
&\sim\fint_Q\left\|W^{\frac{1}{p}}(x)A_R^{-1}\right\|^p\,dx
=\fint_Q\left\|A_R^{-1}W^{\frac{1}{p}}(x)\right\|^p\,dx\\
&\sim\fint_Q\exp\left(\fint_R\log\left[
\left\|W^{-\frac{1}{p}}(y)W^{\frac{1}{p}}(x)\right\|^p\right]\,dy\right)\,dx\\
&=\fint_Q\exp\left(\fint_R\log\left[
\left\|W^{\frac{1}{p}}(x)W^{-\frac{1}{p}}(y)\right\|^p\right]\,dy\right)\,dx.
\end{align*}
From Lemmas \ref{exchange}, \ref{reduceM inv}, and \ref{reduceM}, we infer that,
for any cubes $Q,R\subset\mathbb{R}^n$,
\begin{align*}
\left\|A_QA_R^{-1}\right\|^p
&=\left\|A_R^{-1}A_Q\right\|^p
\sim\exp\left(\fint_R\log\left[\left\|W^{-\frac{1}{p}}(y)A_Q\right\|^p\right]\,dy\right)\\
&=\exp\left(\fint_R\log\left[\left\|A_QW^{-\frac{1}{p}}(y)\right\|^p\right]\,dy\right)\\
&\sim\exp\left(\fint_R\log\left(\fint_Q
\left\|W^{\frac{1}{p}}(x)W^{-\frac{1}{p}}(y)\right\|^p\,dx\right)\,dy\right).
\end{align*}
These finish the proof of Proposition \ref{equivalent}.
\end{proof}

To obtain a sharp estimate on $\|A_QA_R^{-1}\|$,
we introduce the following concepts of lower and upper dimensions.

\begin{definition}\label{Ap dim}
Let $p\in(0,\infty)$ and $d\in\mathbb{R}$.
A matrix weight $W$ is said to have \emph{$A_{p,\infty}$-lower dimension $d$},
denoted by $W\in\mathbb{D}_{p,\infty,d}^{\mathrm{lower}}(\mathbb{R}^n,\mathbb{C}^m)$,
if there exists a positive constant $C$ such that,
for any cubes $Q,R\subset\mathbb{R}^n$ with $Q\subset R$,
\begin{align*}
\exp\left(\fint_R\log\left(
\fint_Q\left\|W^{\frac{1}{p}}(x)W^{-\frac{1}{p}}(y)\right\|^p\,dx\right)\,dy\right)
\leq C\left[\frac{\ell(R)}{\ell(Q)}\right]^d.
\end{align*}
A matrix weight $W$ is said to have \emph{$A_{p,\infty}$-upper dimension $d$},
denoted by $W\in\mathbb{D}_{p,\infty,d}^{\mathrm{upper}}(\mathbb{R}^n,\mathbb{C}^m)$,
if there exists a positive constant $C$ such that,
for any $\lambda\in[1,\infty)$ and any cube $Q\subset\mathbb{R}^n$,
\begin{align*}
\exp\left(\fint_Q\log\left(
\fint_{\lambda Q}\left\|W^{\frac{1}{p}}(x)W^{-\frac{1}{p}}(y)\right\|^p\,dx\right)\,dy\right)
\leq C\lambda^d.
\end{align*}
\end{definition}

In what follows, if there exists no confusion,
we denote $\mathbb{D}_{p,\infty,d}^{\mathrm{lower}}(\mathbb{R}^n,\mathbb{C}^m)$ simply by
$\mathbb{D}_{p,\infty,d}^{\mathrm{lower}}$ and
$\mathbb{D}_{p,\infty,d}^{\mathrm{upper}}(\mathbb{R}^n,\mathbb{C}^m)$ simply by
$\mathbb{D}_{p,\infty,d}^{\mathrm{upper}}$.
We have the following basic properties on $A_{p,\infty}$-lower dimensions.

\begin{proposition}\label{Ap lower dim prop}
Let $p\in(0,\infty)$. Then the following statements hold.
\begin{enumerate}[\rm(i)]
\item For any $d\in(-\infty,0)$, one has
$\mathbb{D}_{p,\infty,d}^{\mathrm{lower}}=\emptyset$.
\item $\bigcup_{d\in[0,n)}\mathbb{D}_{p,\infty,d}^{\mathrm{lower}}
=A_{p,\infty}$.
\item For any $d\in[n,\infty)$, one has
$\mathbb{D}_{p,\infty,d}^{\mathrm{lower}}=A_{p,\infty}$.
\item For any $d_1,d_2\in[0,n]$
 with $d_1<d_2$, one has
$\mathbb{D}_{p,\infty,d_1}^{\mathrm{lower}}
\subsetneqq\mathbb{D}_{p,\infty,d_2}^{\mathrm{lower}}$.
\end{enumerate}
\end{proposition}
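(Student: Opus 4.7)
The overall plan is to translate the definition of $\mathbb{D}_{p,\infty,d}^{\mathrm{lower}}$ into a bound of the form $\|A_QA_R^{-1}\|^p\lesssim[\ell(R)/\ell(Q)]^d$ via Proposition \ref{equivalent}, and then treat the four parts in order. For (i), first note that setting $Q=R$ in the defining inequality forces $[W]_{A_{p,\infty}}<\infty$, so that $W\in A_{p,\infty}$ and Proposition \ref{equivalent} applies. Fixing any cube $R$ and a Lebesgue point $x_0\in R$ at which $W(x_0)$ is invertible (available for almost every $x_0$ by Definition \ref{MatrixWeight}(ii)), and taking shrinking cubes $Q_k:=Q(x_0,2^{-k})$, Lemma \ref{reduceM} with $M=A_R^{-1}$ gives
\begin{equation*}
\left\|A_{Q_k}A_R^{-1}\right\|^p\sim\fint_{Q_k}\left\|W^{\frac{1}{p}}(x)A_R^{-1}\right\|^p\,dx\to\left\|W^{\frac{1}{p}}(x_0)A_R^{-1}\right\|^p>0
\end{equation*}
as $k\to\infty$, whereas for $d<0$ the bound $C[\ell(R)/\ell(Q_k)]^d\to0$; this contradiction proves (i).

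For (ii), the inclusion $\bigcup_{d\in[0,n)}\mathbb{D}_{p,\infty,d}^{\mathrm{lower}}\subset A_{p,\infty}$ again follows from the $Q=R$ specialization, so the main content is the reverse inclusion. Given $W\in A_{p,\infty}$, Corollary \ref{8}(i) furnishes some $r>1$ with $\sup_Q[\fint_Q\|W^{\frac{1}{p}}(x)A_Q^{-1}\|^{pr}\,dx]^{1/r}\le C$. For $Q\subset R$, the plan is to apply Lemma \ref{reduceM} with $M=A_R^{-1}$, then Jensen's inequality to raise the exponent from $p$ to $pr$, and finally the trivial estimate $\fint_Q(\cdot)\le(|R|/|Q|)\fint_R(\cdot)$ together with Corollary \ref{8}(i) applied on $R$, obtaining
\begin{equation*}
\left\|A_QA_R^{-1}\right\|^p\lesssim\left(\frac{|R|}{|Q|}\right)^{\frac{1}{r}}=\left[\frac{\ell(R)}{\ell(Q)}\right]^{\frac{n}{r}};
\end{equation*}
this, combined with Proposition \ref{equivalent}, yields $W\in\mathbb{D}_{p,\infty,n/r}^{\mathrm{lower}}$ with $n/r\in(0,n)$.

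For (iii), the inclusion $\subset$ is again the $Q=R$ specialization, while $\supset$ follows from (ii) together with the monotonicity $[\ell(R)/\ell(Q)]^{d_0}\le[\ell(R)/\ell(Q)]^d$ whenever $\ell(R)\ge\ell(Q)$ and $d_0\le d$. The same monotonicity yields $\subset$ in (iv). For the strict inclusion in (iv), I would invoke the scalar power weight $w_a(x):=|x|^a$ with $a\in(-d_2,-d_1)\cap(-n,0)$, lifted to $W:=w_aI_m$ via Lemma \ref{w vs W}; a direct computation with the concentric family $Q=Q(\mathbf{0},\varepsilon)\subset R=Q(\mathbf{0},1)$ and $\varepsilon\to0^+$ shows $\|A_QA_R^{-1}\|^p\sim\varepsilon^a=[\ell(R)/\ell(Q)]^{-a}$, so the exponent $-a\in(d_1,d_2]$ is sharp; a short case analysis based on the position of $Q$ and $R$ relative to the origin confirms that no other configuration with $Q\subset R$ produces a worse ratio, so $W\in\mathbb{D}_{p,\infty,d_2}^{\mathrm{lower}}\setminus\mathbb{D}_{p,\infty,d_1}^{\mathrm{lower}}$.

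The main obstacle is the $\supset$ direction in (ii): here the crucial input is the strict gain $1/r<1$ provided by the reverse H\"older improvement (Corollary \ref{8}(i)), without which one would merely obtain $W\in\mathbb{D}_{p,\infty,n}^{\mathrm{lower}}$ and lose the essential fact that every $A_{p,\infty}$-matrix weight has a lower dimension strictly below $n$.
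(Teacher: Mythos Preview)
Your proposal is correct and follows essentially the same approach as the paper. The only cosmetic difference is in (ii): you translate first to reducing operators via Proposition \ref{equivalent} and Lemma \ref{reduceM} and then invoke Corollary \ref{8}(i), whereas the paper applies the reverse H\"older estimate (Proposition \ref{reverse Holder matrix}) directly inside the defining integral before bounding by $[W]_{A_{p,\infty}}$; both routes rest on the same reverse H\"older gain $r>1$, and your argument for (iv) via the power weight $w_a$, $a\in(-d_2,-d_1)\cap(-n,0)$, is exactly the special case $b=0$ of the paper's Lemma \ref{example Ap dim}.
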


\begin{proof}
We first prove (i). Let $d\in(-\infty,0)$. Assume there exists
$W\in\mathbb{D}_{p,\infty,d}^{\mathrm{lower}}$.
By Definition \ref{MatrixWeight}(iii), we conclude that
\begin{align*}
\left\|W^{\frac{1}{p}}(\cdot)A_{Q_{0,\mathbf{0}}}^{-1}\right\|^p
\in L^1_{\mathrm{loc}}(\mathbb{R}^n),
\end{align*}
which, together with Lebesgue's differentiation theorem
(see, for instance, \cite[Corollary 2.1.16]{g14c}),
further implies that there exist $x_0\in Q_{0,\mathbf{0}}$
such that $W^{\frac1p}(x_0)$ is invertible
and a sequence of cubes $\{Q_k\}_{k=1}^\infty\subset Q_{0,\mathbf{0}}$ such that
the center of every $Q_k$ is $x_0$ and $\lim_{k\to\infty}\ell(Q_k)=0$ and such that
\begin{align*}
\left\|W^{\frac{1}{p}}(x_0)A_{Q_{0,\mathbf{0}}}^{-1}\right\|^p
=\lim_{k\to\infty}\fint_{Q_k}
\left\|W^{\frac{1}{p}}(x)A_{Q_{0,\mathbf{0}}}^{-1}\right\|^p\,dx.
\end{align*}
By this, Lemmas \ref{reduceM} and \ref{equivalent},
and the assumption that $W\in\mathbb{D}_{p,\infty,d}^{\mathrm{lower}}$, we find that
\begin{align*}
\left\|W^{\frac{1}{p}}(x_0)A_{Q_{0,\mathbf{0}}}^{-1}\right\|^p
&\sim\lim_{k\to\infty}\left\|A_{Q_k}A_{Q_{0,\mathbf{0}}}^{-1}\right\|^p\\
&\sim\lim_{k\to\infty}\exp\left(\fint_{Q_{0,\mathbf{0}}}\log\left(
\fint_{Q_k}\left\|W^{\frac{1}{p}}(x)W^{-\frac{1}{p}}(y)\right\|^p\,dx\right)\,dy\right)\\
&\lesssim\lim_{k\to\infty}\left[\frac{\ell(Q_{0,\mathbf{0}})}{\ell(Q_k)}\right]^{d}
=0
\end{align*}
and hence all entries of $W(x_0)$ are $0$,
which contradicts the invertibility of $W^{\frac1p}(x_0)$.
Thus, $\mathbb{D}_{p,\infty,d}^{\mathrm{lower}}=\emptyset$.
This finishes the proof of (i).

Next, we show (ii).
By Definition \ref{Ap dim} with $Q=R$, we obtain, for any $d\in[0,n)$,
$\mathbb{D}_{p,\infty,d}^{\mathrm{lower}}\subset A_{p,\infty}$ and hence
\begin{align}\label{Ap lower dim prop2}
\bigcup_{d\in[0,n)}\mathbb{D}_{p,\infty,d}^{\mathrm{lower}}\subset A_{p,\infty}.
\end{align}
On the other hand, let $W\in A_{p,\infty}$.
From H\"older's inequality and Proposition \ref{reverse Holder matrix} with $r\in(1,\infty)$ as in \eqref{rr},
we deduce that, for any cubes $Q,R\subset\mathbb{R}^n$ with $Q\subset R$,
\begin{align*}
&\exp\left(\fint_R\log\left(
\fint_Q\left\|W^{\frac{1}{p}}(x)W^{-\frac{1}{p}}(y)\right\|^p\,dx\right)\,dy\right)\\
&\quad\leq\exp\left(\fint_R\log\left(\left[
\fint_Q\left\|W^{\frac{1}{p}}(x)W^{-\frac{1}{p}}(y)\right\|^{pr}\,dx
\right]^{\frac{1}{r}}\right)\,dy\right)\\
&\quad\leq\left[\frac{\ell(R)}{\ell(Q)}\right]^{\frac{n}{r}}
\exp\left(\fint_R\log\left(\left[
\fint_R\left\|W^{\frac{1}{p}}(x)W^{-\frac{1}{p}}(y)\right\|^{pr}\,dx
\right]^{\frac{1}{r}}\right)\,dy\right)\\
&\quad\leq2^{\frac{1}{r}}\left[\frac{\ell(R)}{\ell(Q)}\right]^{\frac{n}{r}}
\exp\left(\fint_R\log\left(
\fint_R\left\|W^{\frac{1}{p}}(x)W^{-\frac{1}{p}}(y)\right\|^p\,dx
\right)\,dy\right)\\
&\quad
\leq2^{\frac{1}{r}}[W]_{A_{p,\infty}}\left[\frac{\ell(R)}{\ell(Q)}\right]^{\frac{n}{r}}
\end{align*}
and hence $W$ has $A_{p,\infty}$-lower dimension $\frac{n}{r}\in(0,n)$.
Therefore,
\begin{align*}
A_{p,\infty}\subset\bigcup_{d\in[0,n)}\mathbb{D}_{p,\infty,d}^{\mathrm{lower}}.
\end{align*}
This, combined with \eqref{Ap lower dim prop2}, further implies (ii).

Now, we prove (iii). Let $d\in[n,\infty)$.
By Definition \ref{Ap dim} with $Q=R$, we obtain
$\mathbb{D}_{p,\infty,d}^{\mathrm{lower}}\subset A_{p,\infty}$.
On the other hand, for any $W\in A_{p,\infty}$,
from the definition of $[W]_{A_{p,\infty}}$, we infer that,
for any cubes $Q,R\subset\mathbb{R}^n$ with $Q\subset R$,
\begin{align*}
&\exp\left(\fint_R\log\left(
\fint_Q\left\|W^{\frac{1}{p}}(x)W^{-\frac{1}{p}}(y)\right\|^p\,dx\right)\,dy\right)\\
&\quad\leq\left[\frac{\ell(R)}{\ell(Q)}\right]^n\exp\left(\fint_R\log\left(
\fint_R\left\|W^{\frac{1}{p}}(x)W^{-\frac{1}{p}}(y)\right\|^p\,dx\right)\,dy\right)\\
&\quad\leq[W]_{A_{p,\infty}}\left[\frac{\ell(R)}{\ell(Q)}\right]^d
\end{align*}
and hence $W\in\mathbb{D}_{p,\infty,d}^{\mathrm{lower}}$.
Therefore, $\mathbb{D}_{p,\infty,d}^{\mathrm{lower}}=A_{p,\infty}$,
which completes the proof of (iii).

The inclusion ``$\subset$'' in Claim (iv) follows directly from Definition \ref{Ap dim}.
That this is a strict inclusion ``$\subsetneqq$'' follows from the examples given in Lemma \ref{example Ap dim} below.
This completes the proof of Proposition \ref{Ap lower dim prop}.
\end{proof}

Now, we can give the following basic properties of $A_{p,\infty}$-upper dimensions.

\begin{proposition}\label{Ap upper dim prop}
Let $p\in(0,\infty)$. Then the following statements hold.
\begin{enumerate}[\rm(i)]
\item For any $d\in(-\infty,0)$,
$\mathbb{D}_{p,\infty,d}^{\mathrm{upper}}=\emptyset$.
\item $\bigcup_{d\in[0,\infty)}\mathbb{D}_{p,\infty,d}^{\mathrm{upper}}=A_{p,\infty}$.
\item For any $d_1,d_2\in[0,\infty)$ with $d_1<d_2$,
$\mathbb{D}_{p,\infty,d_1}^{\mathrm{upper}}
\subsetneqq\mathbb{D}_{p,\infty,d_2}^{\mathrm{upper}}$.
\item For any $p\in(0,1]$, $A_p\subset \mathbb{D}_{p,\infty,0}^{\mathrm{upper}}$.
\end{enumerate}
\end{proposition}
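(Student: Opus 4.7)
My plan is to take the four assertions in increasing order of difficulty, using throughout the reformulation of $W\in\mathbb{D}_{p,\infty,d}^{\mathrm{upper}}$ via Proposition \ref{equivalent} as the uniform bound
$$
\left\|A_{\lambda Q}A_Q^{-1}\right\|^p\lesssim\lambda^d\quad\text{for all cubes }Q\subset\mathbb{R}^n\text{ and }\lambda\in[1,\infty).
$$

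For (iv), I apply Definition \ref{def ap} on the cube $\lambda Q$ with $p\in(0,1]$ to obtain $\fint_{\lambda Q}\|W^{1/p}(x)W^{-1/p}(y)\|^p\,dx\leq[W]_{A_p}$ for a.e.\ $y\in\lambda Q\supset Q$; taking logarithms, averaging over $Q$, and exponentiating then yields the $\mathbb{D}_{p,\infty,0}^{\mathrm{upper}}$ estimate with constant $[W]_{A_p}$. For (iii), the inclusion is immediate since $\lambda^{d_1}\leq\lambda^{d_2}$ whenever $\lambda\geq 1$ and $d_1\leq d_2$; strict inclusion I defer to the family of examples of the final section (in analogy with how Proposition \ref{Ap lower dim prop}(iv) invokes Lemma \ref{example Ap dim}).

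For (ii), one direction is immediate by taking $\lambda=1$. For the converse, Lemma \ref{2.14} gives that for each nonzero $M\in M_m(\mathbb{C})$ the scalar weight $w_M:=\|W^{1/p}M\|^p$ belongs to $A_\infty(\mathbb{R}^n)$ with constant controlled by $[W]_{A_{p,\infty}}$. The reverse H\"older inequality (Lemma \ref{reverse Holder scalar}), together with the classical scalar identity $A_\infty(\mathbb{R}^n)=\bigcup_{q\in[1,\infty)}A_q(\mathbb{R}^n)$ in its quantitative form, yields a uniform polynomial-doubling estimate $w_M(\lambda Q)\leq C\lambda^{d_0}w_M(Q)$ with $d_0,C$ depending only on $[W]_{A_{p,\infty}}$ and $n$. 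Specialising to $M=A_Q^{-1}$ and normalising via Lemma \ref{reduceM} (which gives $\fint_Q w_{A_Q^{-1}}\sim 1$), this translates to $\|A_{\lambda Q}A_Q^{-1}\|^p\lesssim\lambda^{d_0-n}$, so $W\in\mathbb{D}_{p,\infty,\max(d_0-n,0)}^{\mathrm{upper}}$.

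The main obstacle is (i). Suppose, toward a contradiction, that $W\in\mathbb{D}_{p,\infty,d}^{\mathrm{upper}}$ with $d<0$. The idea is to exhibit, for every large $\lambda$, a cube $Q_\lambda$ satisfying $\|A_{\lambda Q_\lambda}A_{Q_\lambda}^{-1}\|^p\gtrsim 1$, which would contradict the assumed bound $\lesssim\lambda^d\to 0$. To do this, pick a point $y_0$ at which $W^{1/p}(y_0)$ is invertible and which is a Lebesgue point of $x\mapsto|W^{1/p}(x)\vec z_0|^p$ for some fixed $\vec z_0\in\mathbb{C}^m\setminus\{\vec{\mathbf{0}}\}$; such $y_0$ exist almost everywhere. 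Let $Q_\epsilon$ be the cube centered at $y_0$ of edge $2\epsilon$. For $\epsilon$ small compared to a threshold depending on $\lambda$, Lebesgue differentiation forces both $\fint_{Q_\epsilon}|W^{1/p}(x)\vec z_0|^p\,dx$ and $\fint_{\lambda Q_\epsilon}|W^{1/p}(x)\vec z_0|^p\,dx$ to lie within a factor of $2$ of the positive number $|W^{1/p}(y_0)\vec z_0|^p$; by Definition \ref{reduce}, this forces $|A_{Q_\epsilon}\vec z_0|$ and $|A_{\lambda Q_\epsilon}\vec z_0|$ to be comparable with constants independent of $\lambda$, and testing the operator norm on the vector $A_{Q_\epsilon}\vec z_0$ gives the desired lower bound $\|A_{\lambda Q_\epsilon}A_{Q_\epsilon}^{-1}\|^p\gtrsim 1$. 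The delicate point is the dependence $\epsilon=\epsilon(\lambda)$, ensuring the same Lebesgue-point neighborhood simultaneously controls both $Q_\epsilon$ and $\lambda Q_\epsilon$; a useful simplification is that a single test vector $\vec z_0$ suffices, which avoids any need to track the full matrix-valued reducing operators.
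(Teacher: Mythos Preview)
Your proposal is correct. Parts (iii) and (iv) match the paper exactly, but your arguments for (i) and (ii) differ genuinely from the paper's.

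For (ii), the paper does not pass through scalar doubling. Instead it applies Proposition~\ref{2.13} on the cube $2Q$ with the test function $H=h_Q A_Q^{-1}$, where $h_Q=M\mathbf 1_Q+\mathbf 1_{2Q\setminus Q}$; optimising in $M$ yields a uniform bound $\|A_{2Q}A_Q^{-1}\|^p\leq (2[W]_{A_{p,\infty}}/C)^{2^n}$, and iteration along the chain $Q\subset 2Q\subset\cdots\subset 2^{\lceil\log_2\lambda\rceil}Q$ produces the polynomial bound in $\lambda$ with an explicit exponent. Your route via Lemma~\ref{2.14} and the scalar doubling of $w_{A_Q^{-1}}$ is shorter and more conceptual---it reduces everything to classical $A_\infty$ theory---at the cost of importing the quantitative scalar identity $A_\infty=\bigcup_q A_q$ as a black box, whereas the paper's argument is self-contained within the matrix framework and pins down the exponent as $2^n(1-\log_2 C+\log_2[W]_{A_{p,\infty}})$.

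For (i), the paper fixes a reference cube $Q_{0,\mathbf 0}$, invokes Lemma~\ref{reduceM inv} to place $\log\|W^{-1/p}(\cdot)A_{Q_{0,\mathbf 0}}\|^p$ in $L^1_{\mathrm{loc}}$, and then applies Lebesgue differentiation to this logarithm: the averages over shrinking cubes $Q_k$ are identified (via Lemmas~\ref{exchange} and~\ref{reduceM}) with the defining quantity for the upper dimension, forcing $\|W^{-1/p}(x_0)A_{Q_{0,\mathbf 0}}\|=0$ and contradicting invertibility. Your argument is more elementary: it avoids Lemma~\ref{reduceM inv} entirely by working with the locally integrable function $|W^{1/p}(\cdot)\vec z_0|^p=\vec z_0^{\,*}W(\cdot)\vec z_0$ and testing the operator norm on a single vector. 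The tradeoff is that your $\epsilon$ must be chosen as a function of $\lambda$, which you correctly flag; the paper's formulation sidesteps this by keeping one cube fixed and shrinking only the other.
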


Although we mostly deal with $A_{p,\infty}$ weights here, the last claim is intentionally stated for $A_p$ weights only.

\begin{proof}
We first prove (i). Let $d\in(-\infty,0)$.
Assume that there exists
$W\in\mathbb{D}_{p,\infty,d}^{\mathrm{upper}}$.
By Lemma \ref{reduceM inv}, we conclude that
\begin{align*}
\log\left[\left\|W^{-\frac{1}{p}}(\cdot)A_{Q_{0,\mathbf{0}}}\right\|^p\right]
\in L^1_{\mathrm{loc}}(\mathbb{R}^n),
\end{align*}
which, together with Lebesgue's differentiation theorem
(see, for instance, \cite[Corollary 2.1.16]{g14c}),
further implies that there exist $x_0\in Q_{0,\mathbf{0}}$
such that $W^{\frac1p}(x_0)$ is invertible
and a sequence of cubes $\{Q_k\}_{k=1}^\infty\subset Q_{0,\mathbf{0}}$ such
that the center of every $Q_k$ for any $k\in\mathbb N$ is $x_0$
and $\lim_{k\to\infty}\ell(Q_k)=0$ and such that
\begin{align*}
\left\|W^{-\frac{1}{p}}(x_0)A_{Q_{0,\mathbf{0}}}\right\|^p
=\lim_{k\to\infty}\exp\left(\fint_{Q_k}
\log\left[\left\|W^{-\frac{1}{p}}(y)A_{Q_{0,\mathbf{0}}}\right\|^p\right]\,dy\right).
\end{align*}
By this, Lemmas \ref{exchange} and \ref{reduceM},
and $W\in\mathbb{D}_{p,\infty,d}^{\mathrm{upper}}$, we find that
\begin{align*}
&\left\|W^{-\frac{1}{p}}(x_0)A_{Q_{0,\mathbf{0}}}\right\|^p\\
&\quad=\lim_{k\to\infty}\exp\left(\fint_{Q_k}
\log\left[\left\|A_{Q_{0,\mathbf{0}}}W^{-\frac{1}{p}}(y)\right\|^p\right]\,dy\right)\\
&\quad\sim\lim_{k\to\infty}\exp\left(\fint_{Q_k}\log\left(
\fint_{Q_{0,\mathbf{0}}}\left\|W^{\frac{1}{p}}(x)W^{-\frac{1}{p}}(y)\right\|^p\,dx\right)\,dy\right)\\
&\quad\lesssim\lim_{k\to\infty}\exp\left(\fint_{Q_k}\log\left(
\fint_{Q_{x_0}}\left\|W^{\frac{1}{p}}(x)W^{-\frac{1}{p}}(y)\right\|^p\,dx\right)\,dy\right)\\
&\quad\lesssim\lim_{k\to\infty}[\ell(Q_k)]^{-d}
=0,
\end{align*}
where $Q_{x_0}$ is centered at $x_0$, has edge length $\sim 1$, and $Q_{0,\mathbf{0}}\subset Q_{x_0}$.
Hence all entries of $W^{-\frac1p}(x_0)$ are $0$,
which contradicts the fact that this is the inverse matrix of $W^{\frac1p}(x_0)$.
Thus, $\mathbb{D}_{p,\infty,d}^{\mathrm{upper}}=\emptyset$.
This finishes the proof of (i).

Now, we show (ii).
By Definition \ref{Ap dim} with $\lambda=1$, we obtain, for any $d\in[0,\infty)$,
$\mathbb{D}_{p,\infty,d}^{\mathrm{upper}}\subset A_{p,\infty}$ and hence
\begin{align}\label{Ap upper dim prop2}
\bigcup_{d\in[0,\infty)}\mathbb{D}_{p,\infty,d}^{\mathrm{upper}}\subset A_{p,\infty}.
\end{align}
It remains to prove $A_{p,\infty}\subset
\bigcup_{d\in[0,\infty)}\mathbb{D}_{p,\infty,d}^{\mathrm{upper}}$.
Let $W\in A_{p,\infty}$ and $M\in(0,\infty)$, where $M$ is determined later.
For any cube $Q\subset\mathbb R^n$ and any $x\in\mathbb R^n$, let
\begin{align*}
h_Q(x):=\begin{cases}
M&\text{if }x\in Q,\\
1&\text{if }x\in\mathbb{R}^n\setminus Q.
\end{cases}
\end{align*}
For any cube $Q\subset\mathbb{R}^n$,
by Proposition \ref{2.13} with $Q$ and $H$ replaced,
respectively, by $2Q$ and $h_QA_Q^{-1}$, we obtain
\begin{align*}
[W]_{A_{p,\infty}}
&\geq\left[\fint_{2Q}\left\|W^{\frac{1}{p}}(x)A_Q^{-1}\right\|^p[h_Q(x)]^p\,dx\right]^{-1}\\
&\quad\times\exp\left(\fint_{2Q}\log\left(
\fint_{2Q}\left\|W^{\frac{1}{p}}(x)A_Q^{-1}\right\|^p[h_Q(y)]^p\,dx\right)\,dy\right)\\
&=:I(Q)\times II(Q).
\end{align*}
From Lemma \ref{reduceM} and the definition of $h_Q$, it follows that,
for any cube $Q\subset\mathbb R^n$,
\begin{align*}
[I(Q)]^{-1}
&=\fint_{2Q}\left\|W^{\frac{1}{p}}(x)A_Q^{-1}\right\|^p\,dx
+(M^p-1)2^{-n}\fint_Q\left\|W^{\frac{1}{p}}(x)A_Q^{-1}\right\|^p\,dx\\
&\sim\left\|A_{2Q}A_Q^{-1}\right\|^p+M^p-1
\end{align*}
and
\begin{align*}
II(Q)
&=\left[\fint_{2Q}\left\|W^{\frac{1}{p}}(x)A_Q^{-1}\right\|^p\,dx\right]
\exp\left(\fint_{2Q}\log\left\{[h_Q(y)]^p\right\}\,dy\right)\\
&\sim\left\|A_{2Q}A_Q^{-1}\right\|^p\exp\left(\frac{\log(M^p)}{2^n}\right).
\end{align*}
Thus, there exists a constant $C\in(0,1]$, depending only on $m$ and $p$, such that,
for any cube $Q\subset\mathbb R^n$,
\begin{align*}
[W]_{A_{p,\infty}}
&\geq C\frac{\|A_{2Q}A_Q^{-1}\|^p}{\|A_{2Q}A_Q^{-1}\|^p+M^p-1}
\exp\left(\frac{\log(M^p)}{2^n}\right).
\end{align*}
Choosing $M\in(0,\infty)$ such that $CM^{\frac{p}{2^n}}=2[W]_{A_{p,\infty}}$, then,
for any cube $Q\subset\mathbb R^n$, one has
\begin{align*}
\left\|A_{2Q}A_Q^{-1}\right\|^p
\leq M^p-1<M^p
=\left(\frac{2[W]_{A_{p,\infty}}}C\right)^{2^n}
\end{align*}
and hence
\begin{align*}
&\exp\left(\fint_Q\log\left(\fint_{\lambda Q}
\left\|W^{\frac{1}{p}}(x)W^{-\frac{1}{p}}(y)\right\|^p\,dx\right)\,dy\right)\\
&\quad\lesssim\exp\left(\fint_Q\log\left(
\fint_{2^{\lceil\log_2\lambda\rceil}Q}
\left\|W^{\frac{1}{p}}(x)W^{-\frac{1}{p}}(y)\right\|^p\,dx
\right)\,dy\right)\\
&\quad\sim\left\|A_{2^{\lceil\log_2\lambda\rceil}Q}A_Q^{-1}\right\|^{p}
\leq\left(\frac{2[W]_{A_{p,\infty}}}C\right)^{2^n\lceil\log_2\lambda\rceil}\\
&\quad\leq\left(\frac{2[W]_{A_{p,\infty}}}C\right)^{2^n(1+\log_2\lambda)}
\sim\lambda^{2^n(1-\log_2 C+\log_2[W]_{A_{p,\infty}})},
\end{align*}
which further implies that $W$ has $A_{p,\infty}$-upper dimension
$2^n(1-\log_2 C+\log_2[W]_{A_{p,\infty}})\in[0,\infty)$.
Therefore,
\begin{align*}
A_{p,\infty}
\subset\bigcup_{d\in[0,\infty)}
\mathbb{D}_{p,\infty,d}^{\mathrm{upper}}.
\end{align*}
From this and \eqref{Ap upper dim prop2}, it follows that
\begin{align*}
\bigcup_{d\in[0,\infty)}\mathbb{D}_{p,\infty,d}^{\mathrm{upper}}
=A_{p,\infty},
\end{align*}
which completes the proof of (ii).

Claim (iv) and the inclusion ``$\subset$'' in (iii) follow directly from Definitions \ref{Ap dim} and \ref{def ap}.
That there is a strict inclusion ``$\subsetneqq$'' in (iii) follows from the examples given in Lemma \ref{example Ap dim} below.
This finishes the proof of Proposition \ref{Ap upper dim prop}.
\end{proof}

For any matrix weight $W\in A_{p,\infty}$, let
$$d_{p,\infty}^{\mathrm{lower}}(W):=
\inf\{d\in[0,n):\ W\text{ has }A_{p,\infty}\text{-lower dimension }d\}$$
and
$$d_{p,\infty}^{\mathrm{upper}}(W):=
\inf\{d\in[0,\infty):\ W\text{ has }A_{p,\infty}\text{-upper dimension }d\}.$$
Let
\begin{align*}
[\![d_{p,\infty}^{\mathrm{lower}}(W),n):=\begin{cases}
[d_{p,\infty}^{\mathrm{lower}}(W),n)
&\text{if }d_{p,\infty}^{\mathrm{lower}}(W)
\text{ is }A_{p,\infty}\text{-lower dimension of }W,\\
(d_{p,\infty}^{\mathrm{lower}}(W),n)
&\text{otherwise}
\end{cases}
\end{align*}
and
\begin{align*}
[\![d_{p,\infty}^{\mathrm{upper}}(W),\infty):=\begin{cases}
[d_{p,\infty}^{\mathrm{upper}}(W),\infty)
&\text{if }d_{p,\infty}^{\mathrm{upper}}(W)
\text{ is }A_{p,\infty}\text{-upper dimension of }W,\\
(d_{p,\infty}^{\mathrm{upper}}(W),\infty)
&\text{otherwise}.
\end{cases}
\end{align*}
Finally, using the concept of $A_{p,\infty}$-dimensions,
we obtain the following \emph{sharp} estimate.
For the sharpness, see Lemma \ref{146x} further below.

\begin{proposition}\label{sharp estimate}
Let $p\in(0,\infty)$, $W\in A_{p,\infty}$,
and $\{A_Q\}_{\mathrm{cube}\,Q}$ be a family of
reducing operators of order $p$ for $W$.
Let $d_1\in[\![d_{p,\infty}^{\mathrm{lower}}(W),\infty)$ and
$d_2\in[\![d_{p,\infty}^{\mathrm{upper}}(W),\infty)$.
Then there exists a positive constant $C$ such that,
for any cubes $Q,R\subset\mathbb{R}^n$,
\begin{equation*}
\left\|A_QA_R^{-1}\right\|^p
\leq C\max\left\{\left[\frac{\ell(R)}{\ell(Q)}\right]^{d_1},
\left[\frac{\ell(Q)}{\ell(R)}\right]^{d_2}\right\}
\left[1+\frac{|c_Q-c_R|}{\ell(Q)\vee\ell(R)}\right]^{d_1+d_2}.
\end{equation*}
\end{proposition}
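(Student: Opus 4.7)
My plan is to bound $\|A_Q A_R^{-1}\|^p$ by inserting an auxiliary reducing operator $A_S$, where $S$ is a cube that simultaneously contains $Q$ and is a concentric dilate of $R$. The reason for this choice is that Definition \ref{Ap dim} together with Proposition \ref{equivalent} furnishes norm bounds of two complementary flavors: the lower-dimension condition controls quantities of the form $\|A_{\text{smaller}}A_{\text{larger}}^{-1}\|^p$ (smaller over larger in the nested case), and the upper-dimension condition controls $\|A_{\lambda R}A_R^{-1}\|^p$ (larger over smaller when the larger cube is a concentric dilate of the smaller). Inserting such an intermediate cube $S$ lets us exploit both directions at once via submultiplicativity of the operator norm.

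Concretely, I would take $S$ as the cube centered at $c_R$ with side length $\ell(S):=\max\{\ell(R),\,2|c_Q-c_R|+\ell(Q)\}$. This guarantees $R\subset S=\lambda R$ with $\lambda:=\ell(S)/\ell(R)\geq 1$, as well as $Q\subset S$, while $\ell(S)\sim\ell(Q)+\ell(R)+|c_Q-c_R|$ with constants depending only on $n$. Submultiplicativity gives $\|A_Q A_R^{-1}\|\leq\|A_Q A_S^{-1}\|\,\|A_S A_R^{-1}\|$. Proposition \ref{equivalent} converts the lower-dimension condition (applied to $Q\subset S$) into $\|A_Q A_S^{-1}\|^p\lesssim[\ell(S)/\ell(Q)]^{d_1}$, and converts the upper-dimension condition (applied to $S=\lambda R$) into $\|A_S A_R^{-1}\|^p\lesssim\lambda^{d_2}=[\ell(S)/\ell(R)]^{d_2}$.

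Multiplying these yields $\|A_Q A_R^{-1}\|^p\lesssim[\ell(S)/\ell(Q)]^{d_1}[\ell(S)/\ell(R)]^{d_2}$. Writing $L:=\ell(Q)\vee\ell(R)$ and using $\ell(S)\sim L(1+|c_Q-c_R|/L)$, the right-hand side factors as
\begin{equation*}
\left[\frac{L}{\ell(Q)}\right]^{d_1}\left[\frac{L}{\ell(R)}\right]^{d_2}\left(1+\frac{|c_Q-c_R|}{L}\right)^{d_1+d_2}.
\end{equation*}
A short case check (depending on whether $\ell(Q)\leq\ell(R)$ or $\ell(R)\leq\ell(Q)$) shows that $[L/\ell(Q)]^{d_1}[L/\ell(R)]^{d_2}$ collapses to $[\ell(R)/\ell(Q)]^{d_1}$ in the first subcase and to $[\ell(Q)/\ell(R)]^{d_2}$ in the second, the other of the two candidates being at most $1$; hence both subcases are captured by $\max\{[\ell(R)/\ell(Q)]^{d_1},[\ell(Q)/\ell(R)]^{d_2}\}$, which is precisely the form asserted.

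I do not anticipate a serious obstacle: once Proposition \ref{equivalent} translates each dimension condition into a clean norm estimate, submultiplicativity handles the combination, and the rest is purely geometric. The one mildly delicate point is verifying $\ell(S)\sim\ell(Q)+\ell(R)+|c_Q-c_R|$ with dimensional constants, as this equivalence is what allows the geometric factors to be regrouped into the symmetric $\max$ together with the single $(1+|c_Q-c_R|/(\ell(Q)\vee\ell(R)))^{d_1+d_2}$ factor on the right-hand side.
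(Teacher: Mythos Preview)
Your proposal is correct and follows essentially the same strategy as the paper: insert an intermediate cube $S$ containing $Q$ with $\ell(S)\sim\ell(Q)+\ell(R)+|c_Q-c_R|$, split via submultiplicativity $\|A_QA_R^{-1}\|\leq\|A_QA_S^{-1}\|\,\|A_SA_R^{-1}\|$, and bound the two factors using the lower and upper dimension conditions through Proposition~\ref{equivalent}. The paper organizes this slightly differently---first isolating the nested cases $Q\subset R$ and $R\subset Q$ and then reducing the general case to these---whereas your choice of $S$ as a concentric dilate $\lambda R$ lets you apply the upper-dimension definition directly and bypass that intermediate step; this is a mild streamlining but not a genuinely different argument.
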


\begin{proof}
Let us first consider the special case when $Q\subset R$.
In this case, by Lemma \ref{equivalent}
and Definition \ref{Ap dim}, we find that
\begin{align}\label{Q subset R}
\left\|A_QA_R^{-1}\right\|^p
\sim\exp\left(\fint_R\log\left(\fint_Q
\left\|W^{\frac{1}{p}}(x)W^{-\frac{1}{p}}(y)\right\|^p\,dx\right)
\,dy\right)
\lesssim\left[\frac{\ell(R)}{\ell(Q)}\right]^{d_1}.
\end{align}

Next, we consider the special case where $R\subset Q$.
In this case, using some geometrical observations,
we find that there exists $\lambda\in[1,\infty)$ such that
$\lambda\sim\ell(Q)/\ell(R)$ and $Q\subset\lambda R$.
These, together with Lemma \ref{reduceM} and Proposition \ref{equivalent},
further imply that
\begin{align}\label{R subset Q}
\left\|A_QA_R^{-1}\right\|^p
&\sim\exp\left(\fint_R\log\left(\fint_Q
\left\|W^{\frac{1}{p}}(x)W^{-\frac{1}{p}}(y)\right\|^p\,dx\right)\,dy\right)\\
&\lesssim\exp\left(\fint_R\log\left(\fint_{\lambda R}
\left\|W^{\frac{1}{p}}(x)W^{-\frac{1}{p}}(y)\right\|^p\,dx\right)\,dy\right)\notag\\
&\lesssim\lambda^{d_2}
\sim\left[\frac{\ell(Q)}{\ell(R)}\right]^{d_2}.\notag
\end{align}

In the general case, we choose a third cube $S$ such that $Q\cup R\subset S$.
This clearly can be achieved with
$\ell(S)\sim\ell(Q)+\ell(R)+|c_Q-c_R|$ by some geometrical observations.
From this, \eqref{R subset Q}, and \eqref{Q subset R}, we deduce that
\begin{align*}
\left\|A_QA_R^{-1}\right\|^p
&\leq\left\|A_QA_S^{-1}\right\|^p\left\|A_SA_R^{-1}\right\|^p
\lesssim\left[\frac{\ell(S)}{\ell(Q)}\right]^{d_1}
\left[\frac{\ell(S)}{\ell(R)}\right]^{d_2}\\
&=\left[\frac{\ell(Q)\vee\ell(R)}{\ell(Q)}\right]^{d_1}
\left[\frac{\ell(S)}{\ell(Q)\vee\ell(R)}\right]^{d_1+d_2}
\left[\frac{\ell(Q)\vee\ell(R)}{\ell(R)}\right]^{d_2}\\
&\sim\max\left\{\left[\frac{\ell(R)}{\ell(Q)}\right]^{d_1},
\left[\frac{\ell(Q)}{\ell(R)}\right]^{d_2}\right\}
\left[1+\frac{|c_Q-c_R|}{\ell(Q)\vee\ell(R)}\right]^{d_1+d_2}.
\end{align*}
This finishes the proof of Proposition \ref{sharp estimate}.
\end{proof}

The following lemma is useful;
since its proof is just some basic calculations, we omit the details.

\begin{lemma}\label{33y}
For any cubes $Q,R\subset\mathbb{R}^n$,
any $x,x'\in Q$, and any $y,y'\in R$,
$$
1+\frac{|x-y|}{\ell(Q)\vee\ell(R)}
\sim1+\frac{|x'-y'|}{\ell(Q)\vee\ell(R)},
$$
where the positive equivalence constants depend only on $n$.
\end{lemma}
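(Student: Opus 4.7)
The plan is to reduce the claim to a single triangle-inequality estimate and then symmetrize. The key geometric observation is that the diameter of a cube $Q\subset\mathbb R^n$ is $\sqrt n\,\ell(Q)$, so for $x,x'\in Q$ one has $|x-x'|\leq\sqrt n\,\ell(Q)$, and analogously for $y,y'\in R$. Using these, I would bound $|x-y|$ in terms of $|x'-y'|$ plus cube-diameter corrections.

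Concretely, the first step is to apply the triangle inequality
\begin{equation*}
|x-y|\leq|x-x'|+|x'-y'|+|y'-y|\leq|x'-y'|+\sqrt n\,\ell(Q)+\sqrt n\,\ell(R)\leq|x'-y'|+2\sqrt n\,[\ell(Q)\vee\ell(R)].
\end{equation*}
Dividing through by $\ell(Q)\vee\ell(R)$ and adding $1$ to both sides then gives
\begin{equation*}
1+\frac{|x-y|}{\ell(Q)\vee\ell(R)}\leq 1+\frac{|x'-y'|}{\ell(Q)\vee\ell(R)}+2\sqrt n\leq(1+2\sqrt n)\left[1+\frac{|x'-y'|}{\ell(Q)\vee\ell(R)}\right],
\end{equation*}
which yields one half of the equivalence with constant depending only on $n$.

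The second step is to observe that the roles of $(x,y)$ and $(x',y')$ in the above argument are completely symmetric, so interchanging them provides the reverse inequality with the same constant $1+2\sqrt n$. Combining the two directions gives the claim with equivalence constants depending only on $n$. There is no real obstacle here; the only thing to be slightly careful about is that the cube-diameter bounds must be measured against $\ell(Q)\vee\ell(R)$ rather than $\ell(Q)$ or $\ell(R)$ individually, which is precisely why the factor of $2$ (rather than $1$) appears in front of $\sqrt n$.
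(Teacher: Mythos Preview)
Your proof is correct and is precisely the kind of ``basic calculation'' the paper alludes to; indeed, the paper omits the proof entirely, stating only that it follows from elementary computations. Your triangle-inequality argument with the cube-diameter bound $|x-x'|\leq\sqrt n\,\ell(Q)$ and the subsequent symmetrization is exactly what is intended.
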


Next, we recall the concept of dyadic cubes.
For any $j\in\mathbb{Z}$ and $k:=(k_1,\ldots,k_n)\in\mathbb{Z}^n$, let
$$
Q_{j,k}:=\prod_{i=1}^n2^{-j}[k_i,k_i+1),
$$
$$
\mathscr{Q}:=\{Q_{j,k}:\ j\in\mathbb{Z},\ k\in\mathbb{Z}^n\},
$$
$$
\mathscr{Q}_+:=\{Q_{j,k}:\ j\in\mathbb{Z}_+,\ k\in\mathbb{Z}^n\},
$$
and
\begin{align*}
\mathscr{Q}_{j}:=\{Q_{j,k}:\ k\in\mathbb{Z}^n\}.
\end{align*}
For any $Q:=Q_{j,k}\in\mathscr{Q}$, we let $j_Q:=j$ and $x_Q:=2^{-j}k$.

Using Proposition \ref{sharp estimate} and Lemma \ref{33y},
we directly obtain the following estimates; we omit the details.

\begin{lemma}\label{sharp}
Let $p\in(0,\infty)$, $W\in A_{p,\infty}$,
and $\{A_Q\}_{\mathrm{cube}\,Q}$ be a family of
reducing operators of order $p$ for $W$.
Let $d_1\in[\![d_{p,\infty}^{\mathrm{lower}}(W),n)$ and
$d_2\in[\![d_{p,\infty}^{\mathrm{upper}}(W),\infty)$.
Then the following three statements hold.
\begin{enumerate}[{\rm(i)}]
\item There exists a positive constant $C$ such that,
for any $Q,R\in\mathscr{Q}$,
\begin{align*}
\left\|A_QA_R^{-1}\right\|^p
\leq C\max\left\{\left[\frac{\ell(R)}{\ell(Q)}\right]^{d_1},
\left[\frac{\ell(Q)}{\ell(R)}\right]^{d_2}\right\}
\left[1+\frac{|x_Q-x_R|}{\ell(Q)\vee\ell(R)}\right]^{d_1+d_2}.
\end{align*}
\item There exists a positive constant $C$ such that,
for any $j\in\mathbb Z$ and $Q,R\in\mathscr{Q}_j$,
\begin{align*}
\left\|A_QA_R^{-1}\right\|^p
\leq C\left(1+2^j|x_Q-x_R|\right)^{d_1+d_2}.
\end{align*}
\item There exists a positive constant $C$ such that,
for any cube $Q,R\subset\mathbb{R}^n$ with $Q\cap R\neq\emptyset$,
\begin{align*}
\left\|A_QA_R^{-1}\right\|^p
\leq C\max\left\{\left[\frac{\ell(R)}{\ell(Q)}\right]^{d_1},
\left[\frac{\ell(Q)}{\ell(R)}\right]^{d_2}\right\}.
\end{align*}
\end{enumerate}
\end{lemma}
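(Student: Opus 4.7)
The plan is to derive all three statements as direct consequences of Proposition \ref{sharp estimate}, with only minor geometric adjustments to pass from centers of cubes to the dyadic corners $x_Q$ used in the statement of the lemma. The main input is the bound
\begin{equation*}
\left\|A_QA_R^{-1}\right\|^p
\lesssim\max\left\{\left[\frac{\ell(R)}{\ell(Q)}\right]^{d_1},
\left[\frac{\ell(Q)}{\ell(R)}\right]^{d_2}\right\}
\left[1+\frac{|c_Q-c_R|}{\ell(Q)\vee\ell(R)}\right]^{d_1+d_2},
\end{equation*}
which is available for arbitrary cubes, and the fact from Lemma \ref{33y} that the quantity $1+|x-y|/(\ell(Q)\vee\ell(R))$ is, up to constants depending only on $n$, independent of the choice of $x\in Q$ and $y\in R$.

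For (i), I would start from Proposition \ref{sharp estimate} applied to $Q,R\in\mathscr{Q}$ and note that $c_Q\in Q$ and $c_R\in R$, while also $x_Q\in\overline Q$ and $x_R\in\overline R$. Applying Lemma \ref{33y} (or rather its immediate extension to the closures, which causes no trouble) with $(x,y)=(c_Q,c_R)$ and $(x',y')=(x_Q,x_R)$, I would conclude that $1+|c_Q-c_R|/(\ell(Q)\vee\ell(R))\sim 1+|x_Q-x_R|/(\ell(Q)\vee\ell(R))$, so the proposition yields exactly the claimed inequality after absorbing the dimensional constants into $C$.

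For (ii), I would simply specialize (i) to $Q,R\in\mathscr Q_j$: then $\ell(Q)=\ell(R)=2^{-j}$, so the maximum factor equals $1$ and the remaining factor is $(1+2^{j}|x_Q-x_R|)^{d_1+d_2}$, which is the claim. For (iii), the assumption $Q\cap R\neq\emptyset$ lets me choose $x_0\in Q\cap R$ and estimate $|c_Q-c_R|\leq|c_Q-x_0|+|x_0-c_R|\leq\tfrac{\sqrt n}{2}(\ell(Q)+\ell(R))\lesssim\ell(Q)\vee\ell(R)$, so the last factor in Proposition \ref{sharp estimate} is bounded by a constant, and what remains is precisely the asserted inequality.

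There is essentially no genuine obstacle here, since the entire analytic content has been packaged into Proposition \ref{sharp estimate}; the only care required is the purely geometric bookkeeping of replacing $c_Q,c_R$ by $x_Q,x_R$ via Lemma \ref{33y} in part (i), and of verifying that intersecting cubes have comparable centers in part (iii). This is why the authors legitimately omit the details.
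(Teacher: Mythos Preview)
Your proposal is correct and follows exactly the approach indicated in the paper, which simply states that the lemma follows directly from Proposition \ref{sharp estimate} and Lemma \ref{33y} and omits the details. One minor remark: since $Q_{j,k}=\prod_{i=1}^n 2^{-j}[k_i,k_i+1)$ is half-open, the corner $x_Q=2^{-j}k$ actually lies in $Q$, so no passage to closures is needed in Lemma \ref{33y}.
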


\section{Examples and sharpness}\label{sec:examples}

To show the sharpness of Proposition \ref{sharp estimate},
corresponding to Definition \ref{Ap dim},
we introduce the concepts of lower and upper dimensions of scalar weights.

\begin{definition}
Let $p\in(0,\infty)$ and $d\in\mathbb{R}$.
A scalar weight $w$ is said to have \emph{$A_\infty$-lower dimension $d$},
denoted by $w\in\mathbb{D}_{\infty,d}^{\mathrm{lower}}(\mathbb{R}^n)$,
if there exists a positive constant $C$ such that,
for any cubes $Q,R\subset\mathbb{R}^n$ with $Q\subset R$,
\begin{align*}
\fint_Qw(x)\,dx\exp\left(\fint_R\log\left\{[w(x)]^{-1}\right\}\,dx\right)
\leq C\left[\frac{\ell(R)}{\ell(Q)}\right]^d.
\end{align*}
A scalar weight $w$ is said to have \emph{$A_\infty$-upper dimension $d$},
denoted by $w\in\mathbb{D}_{\infty,d}^{\mathrm{upper}}(\mathbb{R}^n)$,
if there exists a positive constant $C$ such that,
for any $\lambda\in[1,\infty)$ and any cube $Q\subset\mathbb{R}^n$,
\begin{align*}
\fint_{\lambda Q}w(x)\,dx\exp\left(\fint_Q\log\left\{[w(x)]^{-1}\right\}\,dx\right)
\leq C\lambda^d.
\end{align*}
\end{definition}

The following lemma gives the relations between scalar and matrix weights,
which follows immediately from their definitions; we omit the details.

\begin{lemma}\label{w vs W 2}
Let $p\in(0,\infty)$ and $d\in[0,\infty)$.
Let $w$ be a scalar weight and $W:=wI_m$,
where $I_m$ is identity matrix.
Then the following statements hold.
\begin{enumerate}[\rm(i)]
\item $W\in\mathbb{D}_{p,\infty,d}^{\mathrm{lower}}$ if and only if $w\in\mathbb{D}_{\infty,d}^{\mathrm{lower}}$;
\item $W\in\mathbb{D}_{p,\infty,d}^{\mathrm{upper}}$ if and only if $w\in\mathbb{D}_{\infty,d}^{\mathrm{upper}}$.
\end{enumerate}
\end{lemma}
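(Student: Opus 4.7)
The plan is to substitute $W = wI_m$ directly into the defining expressions of Definition \ref{Ap dim} and reduce them algebraically to the defining expressions for the scalar dimensions. Since both directions of each equivalence amount to the same identity, the proof is a single computation applied to the defining inequalities.

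First I would observe that, since $W(x) = w(x) I_m$ has all eigenvalues equal to $w(x)$, the functional calculus gives $W^{\pm\frac{1}{p}}(x) = w^{\pm\frac{1}{p}}(x)\,I_m$, and hence
\begin{equation*}
W^{\frac{1}{p}}(x) W^{-\frac{1}{p}}(y) = w^{\frac{1}{p}}(x)\, w^{-\frac{1}{p}}(y)\, I_m.
\end{equation*}
Because $\|c I_m\| = |c|$ for any $c \in \mathbb{C}$, raising to the $p$-th power yields the pointwise identity
\begin{equation*}
\left\|W^{\frac{1}{p}}(x) W^{-\frac{1}{p}}(y)\right\|^p = w(x)\, w^{-1}(y).
\end{equation*}

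Next I would substitute this identity into Definition \ref{Ap dim}. For (i), with cubes $Q\subset R$, the inner average splits as
\begin{equation*}
\fint_Q \left\|W^{\frac{1}{p}}(x) W^{-\frac{1}{p}}(y)\right\|^p dx = w^{-1}(y) \fint_Q w(x)\, dx,
\end{equation*}
and applying $\log(ab) = \log a + \log b$ together with the fact that $\fint_Q w(x)\, dx$ is independent of $y$ gives
\begin{equation*}
\exp\left(\fint_R \log\left(\fint_Q \left\|W^{\frac{1}{p}}(x) W^{-\frac{1}{p}}(y)\right\|^p dx\right) dy\right) = \fint_Q w(x)\, dx \cdot \exp\left(\fint_R \log\left[w^{-1}(y)\right] dy\right),
\end{equation*}
which is exactly the quantity controlled in the scalar lower-dimension condition. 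Hence the matrix inequality with constant $C$ and the scalar inequality with the same $C$ are literally the same inequality, proving both implications in (i). For (ii), the identical computation with $R$ replaced by $Q$ and $Q$ replaced by $\lambda Q$ (and the roles of the $x$- and $y$-averages exchanged) yields the scalar upper-dimension condition.

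No real obstacle arises here; the only care needed is the separation of the $x$- and $y$-integrations via $\log(ab) = \log a + \log b$, which is valid precisely because the matrix norm factors as a product when $W$ is a scalar multiple of the identity. The constants match in both directions, which is why the lemma is stated as a genuine equivalence rather than merely a one-sided inclusion.
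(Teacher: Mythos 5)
Your proposal is correct and is exactly the routine computation the paper has in mind when it states the lemma "follows immediately from their definitions; we omit the details." The key identity $\|W^{1/p}(x)W^{-1/p}(y)\|^p = w(x)w^{-1}(y)$ and the resulting factoring through $\log(ab)=\log a+\log b$ are the whole content, and the constants transfer unchanged in both directions.
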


$A_\infty$-dimensions of scalar weights are analogues of
the doubling and the reverse doubling conditions,
which reveal the intrinsic property of $A_\infty$-lower and $A_\infty$-upper dimensions.

\begin{proposition}\label{2.34}
Let $p\in(0,\infty)$, $d\in\mathbb{R}$, and scalar weight $w\in A_\infty$.
Then
\begin{enumerate}[\rm(i)]
\item $w\in\mathbb{D}_{\infty,d}^{\mathrm{lower}}(\mathbb{R}^n)$
if and only if there exists a positive constant $C$ such that,
for any $\lambda\in[1,\infty)$ and any cube $Q\subset\mathbb{R}^n$,
\begin{align}\label{2.35}
\frac{w(Q)}{w(\lambda Q)}
\leq C\lambda^{d-n};
\end{align}
\item $w\in\mathbb{D}_{\infty,d}^{\mathrm{upper}}(\mathbb{R}^n)$
if and only if there exists a positive constant $C$ such that,
for any $\lambda\in[1,\infty)$ and any cube $Q\subset\mathbb{R}^n$,
\begin{align*}
\frac{w(\lambda Q)}{w(Q)}
\leq C\lambda^{d+n}.
\end{align*}
\end{enumerate}
\end{proposition}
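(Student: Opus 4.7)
The plan is to convert both defining inequalities into simpler inequalities involving only ratios of $w$-averages, and then to transport these between concentric and non-concentric cubes using the doubling property of $A_\infty$ weights.

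The engine for the conversion is the two-sided bound, valid for every cube $S$ and every $w\in A_\infty(\mathbb R^n)$ in the sense of Definition~\ref{def ap,infty}:
\begin{equation*}
\frac{1}{\fint_S w(x)\,dx}
\leq \exp\left(\fint_S \log\left\{[w(x)]^{-1}\right\}\,dx\right)
\leq \frac{[w]_{A_\infty(\mathbb R^n)}}{\fint_S w(x)\,dx}.
\end{equation*}
The lower estimate is Jensen's inequality for the concave function $\log$; the upper estimate is literally the $A_\infty$ condition after dividing by $\fint_S w$. Thus $\exp(\fint_S\log w^{-1})\sim 1/\fint_S w$, with equivalence constants depending only on $[w]_{A_\infty(\mathbb R^n)}$.

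For part (ii), the substitution rewrites the defining quantity of $\mathbb D_{\infty,d}^{\mathrm{upper}}(\mathbb R^n)$, up to such a constant, as the ratio $\fint_{\lambda Q}w/\fint_Q w=\lambda^{-n}\,w(\lambda Q)/w(Q)$. Hence the bound $\leq C\lambda^d$ for this quantity is equivalent to $w(\lambda Q)/w(Q)\leq C\lambda^{d+n}$. Both directions are obtained the same way: $(\Rightarrow)$ uses the Jensen lower bound on $\exp(\fint_Q\log w^{-1})$, while $(\Leftarrow)$ uses the $A_\infty$ upper bound.

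For part (i), the same substitution translates the defining condition of $\mathbb D_{\infty,d}^{\mathrm{lower}}(\mathbb R^n)$ into the pointwise requirement $w(Q)/w(R)\leq C(\ell(R)/\ell(Q))^{d-n}$ whenever $Q\subset R$. The forward implication of the claimed characterization is then obtained by specializing $R:=\lambda Q$, concentric with $Q$. The converse is more delicate, since the hypothesis controls only concentric dilations. Given an arbitrary $Q\subset R$, I would introduce the concentric dilate $\widetilde Q:=\lambda Q$ with $\lambda:=2\ell(R)/\ell(Q)$. Because $c_Q\in Q\subset R$, a direct sup-norm computation yields $R\subset \widetilde Q\subset 3R$. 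The doubling of $w$, a classical consequence of $w\in A_\infty(\mathbb R^n)$, then gives $w(R)\leq w(\widetilde Q)\leq w(3R)\leq C\,w(R)$, so that applying the concentric hypothesis to the pair $(Q,\widetilde Q)$ and transferring to $(Q,R)$ costs only a universal constant.

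The main obstacle is precisely this last step in part (i): one must invoke doubling to bridge the gap between the concentric setup of the hypothesis and the arbitrary containment $Q\subset R$ appearing in the definition of lower dimension. Once doubling is in hand, everything else is bookkeeping with Jensen's inequality and the definition of $[w]_{A_\infty(\mathbb R^n)}$.
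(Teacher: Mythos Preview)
Your argument is correct. The engine $\exp\bigl(\fint_S\log w^{-1}\bigr)\sim |S|/w(S)$ is exactly the paper's identity \eqref{2.35y}, and your treatment of (ii) and of the forward implication in (i) coincides with the paper's.

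For the converse in (i) the two proofs diverge slightly. The paper first invokes $w\in A_\infty\Rightarrow w\in A_p$ for some $p\in(1,\infty)$ and then uses the dual-weight equivalence $\exp\bigl(\fint_R\log w^{-1}\bigr)\sim\bigl(\fint_R w^{-p'/p}\bigr)^{p/p'}$ from Lemma~\ref{Ap dual}; since $R\subset\lambda Q$ with $|R|\sim|\lambda Q|$, the integral $\int_R w^{-p'/p}$ is dominated by $\int_{\lambda Q} w^{-p'/p}$, which reduces matters to the concentric case. You instead stay on the weight side and appeal directly to the doubling property of $A_\infty$ weights to compare $w(\widetilde Q)$ with $w(R)$ via the sandwich $R\subset\widetilde Q\subset 3R$. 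Your route is a bit more elementary in that it avoids the dual-weight machinery, at the cost of quoting doubling as a black box; the paper's route makes the dependence on $A_p$ structure explicit. Both arrive at the same estimate with constants depending only on $[w]_{A_\infty}$.
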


\begin{proof}
By case $m=1$ of the definition of a reducing operator \eqref{equ_reduce}
and the estimate of its inverse \eqref{reduceM inv equ2}, we obtain
\begin{align}\label{2.35y}
\exp\left(\fint_R\log\left\{[w(x)]^{-1}\right\}\,dx\right)
\sim\frac{|R|}{w(R)},
\end{align}
which directly implies (ii) and the necessity of (i).
Now, we show the sufficiency of (i).
From $w\in A_\infty(\mathbb{R}^n)$, we infer that there exists $p\in(1,\infty)$
such that $w\in A_p(\mathbb{R}^n)$.
By \eqref{2.35y} and \eqref{AQ inv},
we find that, for any cube $R\subset\mathbb{R}^n$,
\begin{align}\label{2.35x}
\exp\left(\fint_R\log\left\{[w(x)]^{-1}\right\}\,dx\right)
\sim\left\{\fint_R[w(x)]^{-\frac{p'}p}\,dx\right\}^{\frac p{p'}},
\end{align}
where $\frac1p+\frac1{p'}=1$.
For any cubes $Q,R\subset\mathbb{R}^n$ with $Q\subset R$,
we have $R\subset\lambda Q$, where $\lambda\sim\ell(R)/\ell(Q)$,
which, combined with \eqref{2.35x} and case $m=1$ of \eqref{2.35}, further implies that
\begin{align*}
&\fint_Qw(x)\,dx\exp\left(\fint_R\log\left\{[w(x)]^{-1}\right\}\,dx\right)\\
&\quad\sim\fint_Qw(x)\,dx\left\{\fint_R[w(x)]^{-\frac{p'}p}\,dx\right\}^{\frac p{p'}}\\
&\quad\lesssim\fint_Qw(x)\,dx\left\{\fint_{\lambda Q}[w(x)]^{-\frac{p'}p}\,dx\right\}^{\frac p{p'}}\\
&\quad\sim\fint_Qw(x)\,dx\exp\left(\fint_{\lambda Q}\log\left\{[w(x)]^{-1}\right\}\,dx\right)\\
&\quad\sim\frac{|\lambda Q|}{|Q|}\frac{w(Q)}{w(\lambda Q)}
\lesssim\lambda^d
\sim\left[\frac{\ell(R)}{\ell(Q)}\right]^d
\end{align*}
and hence $w\in\mathbb{D}_{\infty,d}^{\mathrm{lower}}(\mathbb{R}^n)$.
This finishes the proof of Proposition \ref{2.34}.
\end{proof}

For any scalar weight $w$, let
\begin{align*}
d_{\infty}^{\mathrm{lower}}(w):=
\inf\{d\in[0,n):\ w\text{ has }A_{\infty}\text{-lower dimension }d\}
\end{align*}
and
\begin{align*}
d_{\infty}^{\mathrm{upper}}(w):=
\inf\{d\in[0,\infty):\ w\text{ has }A_{\infty}\text{-upper dimension }d\}.
\end{align*}

\begin{lemma}\label{example Ap dim 2}
Let $a\in(-n,\infty)$ and $b\in\mathbb{R}$.
Define the scalar weight $w_{a,b}(x):=|x|^a[\log(2+|x|)]^b$ for any $x\in\mathbb{R}^n$.
Then $w_{a,b}\in A_\infty(\mathbb{R}^n)$ and
\begin{enumerate}[\rm(i)]
\item for any $d\in(a_-,\infty)$, there exists a positive constant $C$ such that,
for any $\lambda\in[1,\infty)$ and any cube $Q\subset\mathbb R^n$,
\begin{align}\label{lower}
\frac{w_{a,b}(Q)}{w_{a,b}(\lambda Q)}\leq C\lambda^{d-n};
\end{align}
\item for any $d\in(-\infty,a_-)$, the estimate \eqref{lower} does not hold;
\item for $d=a_-$, the estimate \eqref{lower} holds
if and only if $a\in(0,\infty)$ or $b\in[0,\infty)$;
\item for any $d\in(a_+,\infty)$, there exists a positive constant $C$ such that,
for any $\lambda\in[1,\infty)$ and any cube $Q\subset\mathbb R^n$,
\begin{align}\label{upper}
\frac{w_{a,b}(\lambda Q)}{w_{a,b}(Q)}\leq C\lambda^{d+n};
\end{align}
\item for any $d\in(-\infty,a_+)$, the estimate \eqref{upper} does not hold;
\item for $d=a_+$,
the estimate \eqref{upper} holds
if and only if $a\in(-\infty,0)$ or $b\in(-\infty,0]$.
\end{enumerate}
\end{lemma}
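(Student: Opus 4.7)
The proof reduces to a careful case analysis based on the two regimes for a cube $Q$: \emph{near} the origin (where $Q$ straddles the singularity of $|x|^a$) versus \emph{far} from the origin (where $w_{a,b}$ is essentially constant on $Q$). All six claims follow from this reduction together with explicit asymptotic sequences.

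Step 1 (Two-regime formula). The key computation is that for every cube $Q \subset \mathbb R^n$,
\begin{align*}
w_{a,b}(Q) \sim \begin{cases}
\ell(Q)^{a+n}\,[\log(2+\ell(Q))]^b & \text{if } |c_Q| \leq C_n\,\ell(Q),\\
|c_Q|^a\,[\log(2+|c_Q|)]^b\,\ell(Q)^n & \text{if } |c_Q| > C_n\,\ell(Q),
\end{cases}
\end{align*}
where $C_n$ depends only on $n$. In the near-origin case, $Q$ is comparable to a ball at the origin, so a polar-coordinate computation combined with the assumption $a > -n$ gives the claimed bound (the log factor $[\log(2+r)]^b$ being slowly varying and monotone in $r$). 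In the far-from-origin case, $|x| \sim |c_Q|$ throughout $Q$, so $w_{a,b}$ is comparable to the constant $|c_Q|^a [\log(2+|c_Q|)]^b$ on $Q$. As a byproduct, $w_{a,b}(Q)/|Q|$ is doubling, which combined with Proposition \ref{2.34} (or directly with Proposition \ref{Ap lower dim prop}(ii) and Lemma \ref{w vs W}) yields $w_{a,b} \in A_\infty(\mathbb R^n)$.

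Step 2 (Proof of (i) and (iv)). Fix $d > a_-$ and consider $Q$ and $\lambda Q$. Apply the formula of Step 1 to both cubes and split according to the regime of each. When both are far from the origin, $c_{\lambda Q} = c_Q$ and the log factors cancel, yielding ratio exactly $\lambda^{-n}$. When both are near, the ratio is
\begin{align*}
\frac{w_{a,b}(Q)}{w_{a,b}(\lambda Q)}
\sim \lambda^{-(a+n)}\left[\frac{\log(2+\ell(Q))}{\log(2+\lambda\ell(Q))}\right]^b,
\end{align*}
and the log ratio is at most polylogarithmic in $\lambda$, hence absorbed by $\lambda^{d-a_-}$ for any $d > a_-$. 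The transitional regime ($Q$ far, $\lambda Q$ near) forces $|c_Q| \sim \ell(\lambda Q)$, which likewise reduces to a power-law estimate of the same quality. Claim (iv) is proved dually, replacing the roles of $Q$ and $\lambda Q$ and using $a_+$ in place of $a_-$.

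Step 3 (Sharpness, (ii), (iii), (v), (vi)). For (ii) and (v), take $Q_0$ the unit cube centred at the origin and let $\lambda\to\infty$. Step 1 gives
\begin{align*}
w_{a,b}(Q_0)\sim 1,\qquad w_{a,b}(\lambda Q_0)\sim \lambda^{a+n}(\log\lambda)^b,
\end{align*}
so the ratios in \eqref{lower} and \eqref{upper} are asymptotically $\lambda^{-(a+n)}(\log\lambda)^{-b}$ and $\lambda^{a+n}(\log\lambda)^{b}$ respectively, which exceed any $C\lambda^{d\mp n}$ whenever $d<a_-$ or $d<a_+$. For the critical cases (iii) and (vi) at $d=a_\pm$, the same explicit sequence leaves a residual factor $(\log\lambda)^{\mp b}$; this factor stays bounded iff the polynomial already carries the growth ($a>0$ for (iii), $a<0$ for (vi)) or the log term itself has the favourable sign ($b\geq 0$ for (iii), $b\leq 0$ for (vi)). When none of these holds one checks, using the same $Q_0$ and $\lambda\to\infty$, that the ratio is unbounded.

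The main obstacle is the logarithmic correction at the critical exponents $d=a_\pm$: the polynomial and logarithmic contributions become competitive, forcing the six-way split in (iii) and (vi) according to the signs of $a$ and $b$. Outside of these borderline exponents the estimates are robust and follow uniformly from the two-regime formula in Step 1.
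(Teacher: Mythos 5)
Your two-regime decomposition in Step 1 is correct and is equivalent to the unified formula $\fint_Q w_{a,b}\sim[|c_Q|+\ell(Q)]^a[\log(2+|c_Q|+\ell(Q))]^b$ that the paper cites from \cite{bhyy}; and Step 2 correctly proves (i) and (iv). However, there is a genuine gap in Step 3.

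For (ii) you use only the cube $Q_0$ centred at the origin and observe that $\frac{w_{a,b}(Q_0)}{w_{a,b}(\lambda Q_0)}\sim\lambda^{-(a+n)}(\log\lambda)^{-b}$. This quantity is $\gg\lambda^{d-n}$ as $\lambda\to\infty$ precisely when $d<-a$ (up to the log correction). But $-a\leq a_-=\max\{0,-a\}$, and the inequality is \emph{strict} when $a>0$. Concretely: take $a>0$, so $a_-=0$, and pick any $d\in(-a,0)$, which is allowed in (ii) since $d<a_-$. Then $\lambda^{-(a+n)}\ll\lambda^{d-n}$, so your test sequence is fully compatible with \eqref{lower} and does \emph{not} disprove it. To close the gap one must also test \emph{far-from-origin} cubes — e.g.\ $\ell(Q)=1$ and $|c_Q|=\lambda$ — which give $\frac{w_{a,b}(Q)}{w_{a,b}(\lambda Q)}\sim\lambda^{-n}$, and $\lambda^{-n}\gg\lambda^{d-n}$ for $d<0$. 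You in fact computed exactly this ratio in Step~2 ("both far from origin gives $\lambda^{-n}$"), so the needed ingredient is there; it was just not invoked at the sharpness step. The dual gap occurs in (v): when $a<0$, so $a_+=0$, the origin cube gives $\lambda^{a+n}(\log\lambda)^b\ll\lambda^{d+n}$ for $d\in(a,0)$, so again a far cube with ratio $\sim\lambda^n$ is required. The paper's own treatment (see the proof of the variant Lemma \ref{exa}) explicitly deploys both test families — $c_Q=\mathbf{0}$ and $|c_Q|=\lambda$ — for exactly this reason.

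A secondary remark: for (iii) and (vi), the failure direction at $Q_0$ is fine (the $(\log\lambda)^{|b|}$ factor blows up when $a\leq 0$ and $b<0$, resp.\ when $a\geq 0$ and $b>0$), but the \emph{sufficiency} direction is a uniform estimate over all cubes, not just over a single test sequence. Your closing remark that this "follows uniformly from the two-regime formula" is in principle correct but worth making explicit: one needs, say, that $\sup_{u\ge 1} u^{-a}\bigl(1+\log u\bigr)^{|b|}<\infty$ for $a>0$ applied to $u=\frac{|c_Q|+\lambda\ell(Q)}{|c_Q|+\ell(Q)}$. Finally, the paper itself proves (iv)--(vi) from (i)--(iii) by the duality identity $\frac{w_{a,b}(\lambda Q)}{w_{a,b}(Q)}\sim\lambda^{2n}\,\frac{w_{-a,-b}(Q)}{w_{-a,-b}(\lambda Q)}$, which halves the work compared to arguing each side from scratch.
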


\begin{proof}
By \cite[Lemma 2.40 and Proposition 2.37]{bhyy}, we obtain (i)-(iii).
From \cite[Corollary 2.42]{bhyy}, we deduce that,
for any $\lambda\in[1,\infty)$ and any cube $Q\subset\mathbb R^n$,
\begin{align*}
\frac{w_{a,b}(\lambda Q)}{w_{a,b}(Q)}
&\sim\lambda^n\left[\frac{|c_Q|+\lambda\ell(Q)}{|c_Q|+\ell(Q)}\right]^a
\left\{\frac{\log[2+|c_Q|+\lambda\ell(Q)]}{\log[2+|c_Q|+\ell(Q)]}\right\}^b\\
&\sim\lambda^{2n}\frac{w_{-a,-b}(Q)}{w_{-a,-b}(\lambda Q)}.
\end{align*}
This, together with (i)-(iii), further implies (iv)-(vi),
which complete the proof of Lemma \ref{example Ap dim 2}.
\end{proof}

Applying Proposition \ref{2.34} and Lemma \ref{example Ap dim 2},
we obtain the following conclusion; we omit the details.

\begin{lemma}\label{example Ap dim}
Let $a\in(-n,\infty)$ and $b\in\mathbb{R}$.
Let $w_{a,b}$ be the same as in Lemma \ref{example Ap dim 2}.
Then $w_{a,b}\in A_\infty(\mathbb{R}^n)$ and
\begin{enumerate}[\rm(i)]
\item $d_{\infty}^{\mathrm{lower}}(w_{a,b})=a_-$
is $A_\infty$-lower dimension of $w_{a,b}$
if and only if $a\in(0,\infty)$ or $b\in[0,\infty)$;
\item $d_{\infty}^{\mathrm{upper}}(w_{a,b})=a_+$
is $A_\infty$-upper dimension of $w_{a,b}$
if and only if $a\in(-\infty,0)$ or $b\in(-\infty,0]$.
\end{enumerate}
\end{lemma}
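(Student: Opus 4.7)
The plan is to derive the claim from a direct combination of the equivalent characterizations in Proposition \ref{2.34} with the sharp doubling/reverse-doubling estimates for $w_{a,b}$ already established in Lemma \ref{example Ap dim 2}. All the analytic content (the behavior of integrals of $|x|^a[\log(2+|x|)]^b$ on cubes and on their dilates) is packaged in Lemma \ref{example Ap dim 2}; what remains is purely a bookkeeping translation between two ways of encoding the dimension, followed by reading off the infimum and deciding whether it is attained.

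For part (i), I would first fix $w=w_{a,b}$ and invoke Proposition \ref{2.34}(i) to translate membership in $\mathbb{D}_{\infty,d}^{\mathrm{lower}}(\mathbb R^n)$ into the doubling-type inequality $w(Q)/w(\lambda Q)\leq C\lambda^{d-n}$, which is exactly the inequality \eqref{lower} of Lemma \ref{example Ap dim 2}. Parts (i)--(iii) of that lemma then say precisely: the estimate holds for every $d>a_-$, fails for every $d<a_-$, and at the critical exponent $d=a_-$ it holds if and only if $a\in(0,\infty)$ or $b\in[0,\infty)$. Combining these three facts, the infimum defining $d_{\infty}^{\mathrm{lower}}(w_{a,b})$ equals $a_-$, and this value is itself an $A_\infty$-lower dimension of $w_{a,b}$ exactly when the critical case holds, i.e.\ when $a>0$ or $b\geq 0$. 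This gives the equivalence in (i).

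Part (ii) is the mirror image. Proposition \ref{2.34}(ii) identifies $w\in\mathbb{D}_{\infty,d}^{\mathrm{upper}}(\mathbb R^n)$ with the reverse-doubling bound $w(\lambda Q)/w(Q)\leq C\lambda^{d+n}$, matching \eqref{upper} of Lemma \ref{example Ap dim 2}. Parts (iv)--(vi) of that lemma then classify the situation: the bound is valid for every $d>a_+$, fails for every $d<a_+$, and at $d=a_+$ it holds if and only if $a\in(-\infty,0)$ or $b\in(-\infty,0]$. Taking the infimum gives $d_{\infty}^{\mathrm{upper}}(w_{a,b})=a_+$, and attainment again happens precisely in the critical case, yielding (ii).

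There is no real obstacle here beyond checking that the ranges of $d$ permitted in the definitions of $d_{\infty}^{\mathrm{lower}}$ and $d_{\infty}^{\mathrm{upper}}$ (namely $[0,n)$ and $[0,\infty)$) are consistent with the values $a_-$ and $a_+$ arising from $a\in(-n,\infty)$; since $a_-\in[0,n)$ and $a_+\in[0,\infty)$, this is automatic. The only mildly delicate point is the logical packaging of the \emph{attainment} of the infimum, which is why the statement is phrased as an ``if and only if'' involving the cases $a>0$ or $b\geq 0$ (respectively $a<0$ or $b\leq 0$); once Lemma \ref{example Ap dim 2}(iii) and (vi) are available, this is immediate.
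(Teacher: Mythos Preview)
Your proposal is correct and follows exactly the approach indicated by the paper, which states just before the lemma that the result is obtained by ``applying Proposition \ref{2.34} and Lemma \ref{example Ap dim 2}'' and then omits the details. Your write-up simply spells out this combination, including the small consistency check on the ranges of $a_-$ and $a_+$, and there is nothing to add.
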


When $|x|\lesssim1$, we have $w_{a,b}(x)\sim|x|^a$ and
the disturbance term $[\log(2+|x|)]^b$ has no effect.
To solve this problem, we need the following variant of Lemma \ref{example Ap dim}.

\begin{lemma}\label{exa}
Let $a\in(-n,\infty)$ and $b\in\mathbb{R}$.
Let scalar weight $\widetilde w_{a,b}(x):=|x|^a[\log(2+|x|^{-1})]^b$ for any $x\in\mathbb{R}^n$.
Then $\widetilde w_{a,b}\in A_\infty(\mathbb{R}^n)$ and
\begin{enumerate}[\rm(i)]
\item $d_{\infty}^{\mathrm{lower}}(\widetilde w_{a,b})=a_-$
is $A_\infty$-lower dimension of $\widetilde w_{a,b}$
if and only if $a\in(0,\infty)$ or $b\in(-\infty,0]$;
\item $d_{\infty}^{\mathrm{upper}}(\widetilde w_{a,b})=a_+$
is $A_\infty$-upper dimension of $\widetilde w_{a,b}$
if and only if $a\in(-\infty,0)$ or $b\in[0,\infty)$.
\end{enumerate}
\end{lemma}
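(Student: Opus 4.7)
The plan is to parallel the proof of Lemma \ref{example Ap dim}, which reduced everything, via Proposition \ref{2.34}, to the explicit asymptotic formula for $w_{a,b}(Q)$ from \cite[Corollary 2.42]{bhyy}. Since $\log(2+|x|^{-1})\in[\log 2,\log 3]$ whenever $|x|\ge 1$ and behaves like $\log(|x|^{-1})$ as $|x|\to 0$, the perturbation $[\log(2+|x|^{-1})]^b$ in $\widetilde w_{a,b}$ is essentially concentrated near the origin, precisely opposite to the factor $[\log(2+|x|)]^b$ in $w_{a,b}$, which is bounded near the origin but blows up at infinity. My strategy is to first establish the analogous asymptotic formula for $\widetilde w_{a,b}(Q)$ and then feed it into the doubling characterisation of Proposition \ref{2.34} to read off both dimensions.

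The key step is the uniform asymptotic
\begin{equation*}
\widetilde w_{a,b}(Q)\sim \ell(Q)^n\,(|c_Q|+\ell(Q))^a\,\bigl[\log\bigl(2+(|c_Q|+\ell(Q))^{-1}\bigr)\bigr]^b,
\end{equation*}
which I would establish by case analysis on the relative sizes of $|c_Q|$, $\ell(Q)$, and $1$. When $|c_Q|\gtrsim\ell(Q)$, the integrand is essentially constant on $Q$ and equals $|c_Q|^a[\log(2+|c_Q|^{-1})]^b$ times $|Q|$. When $|c_Q|\lesssim\ell(Q)\lesssim 1$, radial integration around the origin combined with a change of variables $\rho=r/\ell(Q)$ gives the claimed formula with log factor $[\log(2+\ell(Q)^{-1})]^b$; the condition $a>-n$ guarantees integrability of the singular part. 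When $\ell(Q)\gtrsim 1$, the log factor is of order one and we recover $\widetilde w_{a,b}(Q)\sim \ell(Q)^n(|c_Q|+\ell(Q))^a$, already handled by the computations for $w_{a,b}$ in Lemma \ref{example Ap dim 2}. The three cases combine into the single uniform formula above.

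Given this asymptotic, $\widetilde w_{a,b}\in A_\infty(\mathbb R^n)$ is immediate via Proposition \ref{2.34}, and the powers $\lambda^{a_\pm}$ governing the two dimensions emerge from the factor $(|c_Q|+\ell(Q))^a$ exactly as in Lemma \ref{example Ap dim 2}. The main obstacle is the sharpness analysis in the critical case $d=a_\pm$, where the power $|x|^a$ alone is insufficient and the sign of $b$ becomes decisive. Since $[\log(2+(|c_Q|+\ell(Q))^{-1})]^b$ is \emph{large} for small cubes near the origin and \emph{small} for large cubes, the direction of monotonicity in $b$ flips compared to Lemma \ref{example Ap dim}. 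This produces the conditions ``$a\in(0,\infty)$ or $b\in(-\infty,0]$'' for the critical lower dimension and ``$a\in(-\infty,0)$ or $b\in[0,\infty)$'' for the critical upper dimension in Lemma \ref{exa}, as opposed to the opposite sign conditions in Lemma \ref{example Ap dim}. Once this sign bookkeeping is tracked through the doubling ratios $\widetilde w_{a,b}(\lambda Q)/\widetilde w_{a,b}(Q)$ and $\widetilde w_{a,b}(Q)/\widetilde w_{a,b}(\lambda Q)$ computed from the asymptotic above, the if-and-only-if statements follow exactly as in the previous lemma.
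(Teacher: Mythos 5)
Your proposal is correct and follows essentially the same route as the paper: establish the uniform asymptotic for $\widetilde w_{a,b}(Q)$ (this is precisely the paper's Corollary \ref{135x}), feed it into the doubling characterisation of Proposition \ref{2.34}, and track the $b$-dependent log factor through the critical cases $d=a_\pm$, with the correct observation that the sign condition on $b$ flips relative to Lemma \ref{example Ap dim} because $\log(2+|x|^{-1})$ is unbounded near the origin rather than at infinity. The only minor stylistic difference is that the paper deduces part (ii) from part (i) via the identity $\widetilde w_{a,b}(\lambda Q)/\widetilde w_{a,b}(Q)\sim\lambda^{2n}\,\widetilde w_{-a,-b}(Q)/\widetilde w_{-a,-b}(\lambda Q)$, while you propose computing both directions directly from the asymptotic; both are fine, and your reflection-based heuristic about where the log factor lives is equivalent in substance to the paper's $(a,b)\mapsto(-a,-b)$ symmetry.
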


To prove Lemma \ref{exa}, we need a technical lemma.
Applying an argument similar to that used in the proof of \cite[Lemma 2.41]{bhyy},
we obtain the following estimate; we omit the details.

\begin{lemma}\label{135}
Let $a\in(-n,\infty)$, $b\in\mathbb{R}$, and
$\widetilde w_{a,b}$ be the same as in Lemma \ref{exa}.
Then, for any $x_0\in\mathbb{R}^n$ and $r\in(0,\infty)$,
\begin{align*}
\fint_{B(x_0,r)}\widetilde w_{a,b}(x)\,dx
\sim(|x_0|+r)^a\left\{\log\left[2+(|x_0|+r)^{-1}\right]\right\}^b,
\end{align*}
where the positive equivalence constants depend only on $n$, $a$, and $b$.
\end{lemma}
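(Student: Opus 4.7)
My plan is to establish the claimed equivalence via a dichotomy based on the size of $|x_0|$ versus $r$. In the \emph{far-from-origin case} $|x_0|>2r$, the triangle inequality gives $|x|\sim|x_0|\sim|x_0|+r$ pointwise on $B(x_0,r)$, and since $t\mapsto\log(2+t^{-1})$ is monotone on $(0,\infty)$, we also obtain $\log(2+|x|^{-1})\sim\log(2+(|x_0|+r)^{-1})$ pointwise on the ball. The weight $\widetilde w_{a,b}$ is therefore pointwise comparable to the right-hand side of the claim, and averaging yields the result immediately in this case.

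In the \emph{near-origin case} $|x_0|\leq 2r$, we have $|x_0|+r\sim r$. The upper bound reduces to the centered-ball estimate via $B(x_0,r)\subset B(0,3r)$; for the lower bound, when $|x_0|\leq r/2$ we use $B(0,r/2)\subset B(x_0,r)$, and when $r/2<|x_0|\leq 2r$ we restrict to the subset $B(x_0,r)\cap\{|x|\geq r/4\}$ (whose measure is $\sim r^n$), on which $\widetilde w_{a,b}$ is pointwise $\sim r^a[\log(2+r^{-1})]^b$. Thus matters reduce to proving the centered-ball equivalence $\fint_{B(0,R)}\widetilde w_{a,b}(x)\,dx\sim R^a[\log(2+R^{-1})]^b$ for all $R\in(0,\infty)$, which, after passing to polar coordinates and dividing out $|B(0,R)|=c_n R^n$, is equivalent to
\[
\int_0^R\rho^{a+n-1}[\log(2+\rho^{-1})]^b\,d\rho\sim R^{a+n}[\log(2+R^{-1})]^b.
\]
The lower bound here follows by restricting the integral to $[R/2,R]$, where both factors are comparable to their values at $\rho=R$ (using the monotonicity and slow variation of $\log(2+\rho^{-1})$ on this range).

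The main obstacle is the upper bound on the tail $\int_0^{R/2}\rho^{a+n-1}[\log(2+\rho^{-1})]^b\,d\rho$, which may be singular at $\rho=0$ when $a+n-1<0$ and $b>0$. The key point is that $a+n>0$: using a dyadic decomposition $\rho\in[R/2^{k+1},R/2^k]$ for $k\geq1$, each piece contributes $\sim 2^{-k(a+n)}R^{a+n}[\log(2+2^{k+1}R^{-1})]^b$, and the geometric factor $2^{-k(a+n)}$ overwhelms the at-most-polynomial-in-$k$ growth (when $b\geq 0$) or decay (when $b<0$) of the logarithmic factor, uniformly in $R\in(0,\infty)$. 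Summing in $k$ yields a total bound of order $R^{a+n}[\log(2+R^{-1})]^b$, as needed. Combining the two cases then completes the proof, mirroring the structure of the argument used for \cite[Lemma 2.41]{bhyy}.
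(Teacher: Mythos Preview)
Your argument is correct and is precisely the kind of adaptation of \cite[Lemma~2.41]{bhyy} that the paper invokes (the paper itself omits all details and simply refers to that lemma). The case split $|x_0|>2r$ versus $|x_0|\le 2r$, the reduction to centred balls, and the dyadic summation using $a+n>0$ to absorb the logarithmic growth are exactly the expected steps; the only point worth stating a bit more explicitly is that monotonicity alone does not give $\log(2+|x|^{-1})\sim\log(2+(|x_0|+r)^{-1})$---you also need the slow-variation estimate $\log(2+ct^{-1})\le\log c+\log(2+t^{-1})\lesssim\log(2+t^{-1})$ for fixed $c\ge1$, which you do invoke later and which is elementary.
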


Lemma \ref{135} remains true if we replace balls $B$ therein by cubes $Q$;
we omit the details.

\begin{corollary}\label{135x}
Let $a\in(-n,\infty)$, $b\in\mathbb{R}$, and
$\widetilde w_{a,b}$ be the same as in Lemma \ref{exa}.
Then, for any cube $Q\subset\mathbb{R}^n$,
$$
\fint_Q\widetilde w_{a,b}(x)\,dx
\sim[|c_Q|+\ell(Q)]^a\left(\log\left\{2+[|c_Q|+\ell(Q)]^{-1}\right\}\right)^b,
$$
where the positive equivalence constants depend only on $n$, $a$, and $b$.
\end{corollary}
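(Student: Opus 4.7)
The plan is to sandwich the cube $Q$ between two concentric balls, apply Lemma \ref{135} to each, and use the fact that the resulting bounds have the same form up to constants. Specifically, I would use the standard inclusions
$$
B\left(c_Q,\frac{\ell(Q)}{2}\right)\subset Q\subset B\left(c_Q,\frac{\sqrt{n}\,\ell(Q)}{2}\right)=:B_{\mathrm{out}},
$$
and write $B_{\mathrm{in}}:=B(c_Q,\ell(Q)/2)$. The Lebesgue measures of $B_{\mathrm{in}}$, $Q$, and $B_{\mathrm{out}}$ are all comparable with constants depending only on $n$.

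Since $\widetilde w_{a,b}$ is nonnegative, the inclusions above yield
$$
\int_{B_{\mathrm{in}}}\widetilde w_{a,b}(x)\,dx
\leq\int_Q\widetilde w_{a,b}(x)\,dx
\leq\int_{B_{\mathrm{out}}}\widetilde w_{a,b}(x)\,dx,
$$
and after dividing by $|Q|\sim|B_{\mathrm{in}}|\sim|B_{\mathrm{out}}|$ I obtain
$$
\fint_{B_{\mathrm{in}}}\widetilde w_{a,b}(x)\,dx
\lesssim\fint_Q\widetilde w_{a,b}(x)\,dx
\lesssim\fint_{B_{\mathrm{out}}}\widetilde w_{a,b}(x)\,dx.
$$
Applying Lemma \ref{135} with $x_0=c_Q$ and the two respective radii $\ell(Q)/2$ and $\sqrt{n}\,\ell(Q)/2$, both the upper and lower averages are equivalent to quantities of the form $(|c_Q|+c\,\ell(Q))^a\{\log[2+(|c_Q|+c\,\ell(Q))^{-1}]\}^b$ for some $c\sim 1$.

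The final step is the routine observation that the function $t\mapsto t^a[\log(2+t^{-1})]^b$ has comparable values at comparable inputs, with constants depending only on $a$, $b$, and $n$: the power $t^a$ is trivially scale-comparable, while $\log(2+t^{-1})$ is bounded on any range $t\gtrsim 1$ and behaves like $\log(t^{-1})$ for small $t$, in which case $\log((ct)^{-1})=\log(t^{-1})-\log c$ is comparable to $\log(t^{-1})$ once the latter is bounded below. Hence both extremes of the sandwich are equivalent to $[|c_Q|+\ell(Q)]^a(\log\{2+[|c_Q|+\ell(Q)]^{-1}\})^b$, which yields the claim. I do not foresee a genuine obstacle here; the argument is entirely routine, which is presumably why the authors chose to omit the details.
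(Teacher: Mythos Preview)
Your proof is correct. The paper itself omits all details, merely remarking that ``Lemma~\ref{135} remains true if we replace balls $B$ therein by cubes $Q$''---that is, the authors suggest rerunning the proof of Lemma~\ref{135} with cubes in place of balls. Your approach is slightly different: rather than repeating that argument, you reduce directly to the ball case via the standard sandwich $B_{\mathrm{in}}\subset Q\subset B_{\mathrm{out}}$ and invoke Lemma~\ref{135} as a black box. This is arguably cleaner, since it requires no inspection of how Lemma~\ref{135} was proved. The only step that deserves a moment's care is the comparability of $\log(2+t^{-1})$ at comparable inputs; your justification is slightly informal, but the clean way to see it is that $\log(2+t^{-1})\geq\log 2$ for all $t>0$, so the additive error $\log c$ coming from $2+(ct)^{-1}\leq c(2+t^{-1})$ can be absorbed multiplicatively. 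Either route is entirely routine, which is why the authors omitted the details.
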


It is the time for us to prove Lemma \ref{exa}.

\begin{proof}[Proof of Lemma \ref{exa}]
We first show (i).
Let $a\in(-n,\infty)$ and $b\in\mathbb{R}$.
By \cite[Lemma 2.3(v)]{fs97}, we find that $\widetilde w_{a,b}\in A_\infty(\mathbb R^n)$.
This, combined with Proposition \ref{2.34}(i),
further implies that, to prove (i),
we need only to show that
\begin{enumerate}[\rm(a)]
\item for any $d\in(a_-,\infty)$, there exists a positive constant $C$ such that,
for any $\lambda\in[1,\infty)$ and any cube $Q\subset\mathbb R^n$,
\begin{align}\label{lower 2}
\frac{\widetilde w_{a,b}(Q)}{\widetilde w_{a,b}(\lambda Q)}\leq C\lambda^{d-n};
\end{align}
\item for any $d\in(-\infty,a_-)$, \eqref{lower 2} does not hold;
\item for $d=a_-$, \eqref{lower 2} holds
if and only if $a\in(0,\infty)$ or $b\in(-\infty,0]$.
\end{enumerate}

Now, we prove (a). Let $d\in(a_-,\infty)$.
By Lemma \ref{135x}, we conclude that,
for any $\lambda\in[1,\infty)$ and any cube $Q\subset\mathbb R^n$,
\begin{equation}\label{137}
\frac{\widetilde w_{a,b}(Q)}{\widetilde w_{a,b}(\lambda Q)}
\sim\lambda^{-n}\left[\frac{|c_Q|+\ell(Q)}{|c_Q|+\lambda\ell(Q)}\right]^a
\left[\frac{\log(2+[|c_Q|+\ell(Q)]^{-1})}{\log(2+[|c_Q|+\lambda\ell(Q)]^{-1})}\right]^b.
\end{equation}
Applying an argument similar to that used in the estimation of \cite[(2.37)]{bhyy},
we obtain, for any $\lambda\in[1,\infty)$,
$$
\frac{\widetilde w_{a,b}(Q)}{\widetilde w_{a,b}(\lambda Q)}
\lesssim\lambda^{(a_-)-n}(1+\log_2\lambda)^{b_+}
\lesssim\lambda^{d-n}.
$$
This finishes the proof of (a).

Next, we show (b).
By \eqref{137}, we obtain, for any $\lambda\in[1,\infty)$
and any cube $Q\subset\mathbb R^n$ with $c_Q=\mathbf{0}$ and $\ell(Q)=1$,
$$
\frac{\widetilde w_{a,b}(Q)}{\widetilde w_{a,b}(\lambda Q)}
\gtrsim\lambda^{-a-n}\left[\frac{\log3}{\log(2+\lambda^{-1})}\right]^b;
$$
for any $\lambda\in[1,\infty)$
and any cube $Q\subset\mathbb R^n$ with $|c_Q|=\lambda$ and $\ell(Q)=1$,
$$
\frac{\widetilde w_{a,b}(Q)}{\widetilde w_{a,b}(\lambda Q)}
\gtrsim\lambda^{-n}\left[\frac{\lambda+1}{2\lambda}\right]^a
\left[\frac{\log(2+(\lambda+1)^{-1})}{\log(2+(2\lambda)^{-1})}\right]^b
\sim\lambda^{-n}
$$
and hence \eqref{lower 2} does not hold if $d\in(-\infty,a_-)$.
This finishes the proof of (b).

Finally, applying an argument similar to that used in the proof of \cite[Lemma 2.40(ii)]{bhyy},
we obtain (c). This finishes the proof of (i).

Now, we show (ii).
By Lemma \ref{135x}, we conclude that,
for any $\lambda\in[1,\infty)$ and any cube $Q\subset\mathbb R^n$,
\begin{align*}
\frac{\widetilde w_{a,b}(\lambda Q)}{\widetilde w_{a,b}(Q)}
&\sim\lambda^{n}\left[\frac{|c_Q|+\lambda\ell(Q)}{|c_Q|+\ell(Q)}\right]^a
\left[\frac{\log(2+[|c_Q|+\lambda\ell(Q)]^{-1})}{\log(2+[|c_Q|+\ell(Q)]^{-1})}\right]^b\\
&\sim\lambda^{2n}\frac{\widetilde w_{-a,-b}(Q)}{\widetilde w_{-a,-b}(\lambda Q)}.
\end{align*}
Applying this and (a)-(c), we obtain
\begin{enumerate}
\item[\rm(d)] for any $d\in(a_+,\infty)$, there exists a positive constant $C$ such that,
for any $\lambda\in[1,\infty)$ and any cube $Q\subset\mathbb R^n$,
\begin{align}\label{upper 2}
\frac{\widetilde w_{a,b}(\lambda Q)}{\widetilde w_{a,b}(Q)}\leq C\lambda^{d+n};
\end{align}
\item[\rm(e)] for any $d\in(-\infty,a_+)$, \eqref{upper 2} does not hold;
\item[\rm(f)] for $d=a_+$, \eqref{upper 2} holds
if and only if $a\in(-\infty,0)$ or $b\in[0,\infty)$.
\end{enumerate}
These, together with Proposition \ref{2.34}(ii), further imply (ii),
which then completes the proof of Lemma \ref{exa}.
\end{proof}

Finally, we prove that the estimate of $\|A_Q A_R^{-1}\|$ in terms of the upper and lower $A_{p,\infty}$-dimensions, as given in Proposition \ref{sharp estimate}, is \emph{sharp}.

\begin{lemma}\label{146x}
Let $d_1\in[0,n)$, $d_2\in[0,\infty)$, and $a,b,c\in\mathbb R$.
\begin{enumerate}[\rm(i)]
\item If, for any $W\in A_{p,\infty}$ satisfying that
$d_{p,\infty}^{\mathrm{lower}}(W)=d_1$,
$d_{p,\infty}^{\mathrm{upper}}(W)=d_2$,
$W\in\mathbb D_{p,\infty,d_1}^{\mathrm{lower}}$,
and $W\in\mathbb D_{p,\infty,d_2}^{\mathrm{upper}}$,
there exists a positive constant $C$ such that,
for any cubes $Q,R\subset\mathbb{R}^n$,
\begin{align}\label{2.26x}
\left\|A_QA_R^{-1}\right\|^p
\leq C\max\left\{\left[\frac{\ell(R)}{\ell(Q)}\right]^a,
\left[\frac{\ell(Q)}{\ell(R)}\right]^b\right\}
\left[1+\frac{|c_Q-c_R|}{\ell(Q)\vee\ell(R)}\right]^c,
\end{align}
where $\{A_Q\}_{\mathrm{cube}\,Q}$ is a family of
reducing operators of order $p$ for $W$, then
\begin{align*}
a\in[d_1,\infty),\
b\in[d_2,\infty),\text{ and }
c\in[d_1+d_2,\infty).
\end{align*}
\item If, for any $W\in A_{p,\infty}$ satisfying that
$d_{p,\infty}^{\mathrm{lower}}(W)=d_1$,
$d_{p,\infty}^{\mathrm{upper}}(W)=d_2$,
$W\notin\mathbb D_{p,\infty,d_1}^{\mathrm{lower}}$,
and $W\in\mathbb D_{p,\infty,d_2}^{\mathrm{upper}}$,
\eqref{2.26x} holds, then
\begin{align*}
a\in(d_1,\infty),\
b\in[d_2,\infty),\text{ and }
c\in(d_1+d_2,\infty).
\end{align*}
\item If, for any $W\in A_{p,\infty}$ satisfying that
$d_{p,\infty}^{\mathrm{lower}}(W)=d_1$,
$d_{p,\infty}^{\mathrm{upper}}(W)=d_2$,
$W\in\mathbb D_{p,\infty,d_1}^{\mathrm{lower}}$,
and $W\notin\mathbb D_{p,\infty,d_2}^{\mathrm{upper}}$,
\eqref{2.26x} holds, then
\begin{align*}
a\in[d_1,\infty),\
b\in(d_2,\infty),\text{ and }
c\in(d_1+d_2,\infty).
\end{align*}
\item If, for any $W\in A_{p,\infty}$ satisfying that
$d_{p,\infty}^{\mathrm{lower}}(W)=d_1$,
$d_{p,\infty}^{\mathrm{upper}}(W)=d_2$,
$W\notin\mathbb D_{p,\infty,d_1}^{\mathrm{lower}}$,
and $W\notin\mathbb D_{p,\infty,d_2}^{\mathrm{upper}}$,
\eqref{2.26x} holds, then
\begin{align}\label{146x equ4}
a\in(d_1,\infty),\
b\in(d_2,\infty),\text{ and }
c\in(d_1+d_2,\infty).
\end{align}
\end{enumerate}
\end{lemma}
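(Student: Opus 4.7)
The plan is to prove each of the four claims by exhibiting, for every lower bound on $a$, $b$, and $c$, an explicit matrix weight $W$ with the prescribed $A_{p,\infty}$-dimensions, together with a test cube pair $(Q,R)$ for which $\|A_QA_R^{-1}\|^p$ grows fast enough that \eqref{2.26x} forces the relevant exponent to respect the claimed bound. The main tool is Lemma \ref{w vs W 2}, which transfers $A_{p,\infty}$-dimensions between scalar and matrix settings, together with the scalar-reduction identity
\begin{equation*}
\|A_QA_R^{-1}\|^p\sim\frac{\fint_Q w}{\fint_R w}\qquad\text{when }W=wI_m,
\end{equation*}
so that, for diagonal test weights, the relevant norms reduce to ratios of averages controlled by Corollary \ref{135x} and Lemmas \ref{example Ap dim} and \ref{exa}.

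For the bound $a\geq d_1$, I choose $W=wI_m$ with $w$ a scalar weight of $A_\infty$-lower dimension $d_1$ (for instance $w(x)=|x|^{-d_1}$ when $d_1\in(0,n)$, possibly multiplied by a growth factor at infinity in order to match the prescribed upper dimension $d_2$), and I take concentric cubes $Q\subset R$ centered at the origin with $\lambda:=\ell(R)/\ell(Q)\to\infty$ and both contained in a small neighbourhood of the origin; the asymptotics from Corollary \ref{135x} give $\|A_QA_R^{-1}\|^p\sim\lambda^{d_1}$, and comparison with the $\lesssim\lambda^a$ bound of \eqref{2.26x} forces $a\geq d_1$ as $\lambda\to\infty$. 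The bound $b\geq d_2$ follows dually from a scalar weight behaving like $|x|^{d_2}$ at infinity and concentric cubes $R\subset Q$ at the origin with $\ell(Q)\gg\ell(R)\gg1$.

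The bound $c\geq d_1+d_2$ is the crux. A direct scalar computation shows that, for $W=wI_m$ and for $Q,R$ of equal edge length $r$ separated by a large distance $t$, the ratio $\fint_Q w/\fint_R w$ can only saturate $(t/r)^{\max(d_1,d_2)}$, never the full $(t/r)^{d_1+d_2}$. To reach the product rate one genuinely needs a matrix weight whose eigenstructure encodes the lower-dimensional singularity in one direction and the upper-dimensional growth in an \emph{orthogonal} direction, so that the reducing operators $A_Q$ and $A_R$ for well-chosen cubes are diagonalised along misaligned bases and their composition picks up both dimensional factors multiplicatively. A natural candidate is $W(x)=U^*(x)\,\mathrm{diag}(w_1(x),w_2(x))\,U(x)$ with $w_1$ supplying the $d_1$-singularity, $w_2$ supplying the $d_2$-growth, and a position-dependent unitary $U(x)$ whose eigenframe differs between the regions housing $Q$ and $R$; one then checks that such $W$ lies in $A_{p,\infty}$ with exactly the prescribed dimensions and that $\|A_QA_R^{-1}\|^p\sim(|c_Q-c_R|/(\ell(Q)\vee\ell(R)))^{d_1+d_2}$, forcing $c\geq d_1+d_2$ via \eqref{2.26x}.

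The strict-inequality refinements in parts (ii)--(iv) are handled by replacing the above test weights by their logarithmically-corrected analogues $\widetilde w_{a,b}(x)=|x|^a[\log(2+|x|^{-1})]^b$ from Lemma \ref{exa}, choosing the sign of the log-exponent $b$ so that the infimal dimension $d_{p,\infty}^{\mathrm{lower}}(W)$ (or its upper counterpart) is not attained. The ratio $\|A_QA_R^{-1}\|^p$ then acquires an extra $(\log\lambda)^b$ factor that dominates any pure endpoint rate $\lambda^{d_i}$ from \eqref{2.26x}, forcing the strict inequality. The principal obstacle is the matrix-weight construction needed for $c\geq d_1+d_2$: scalar weights demonstrably cannot reach beyond the $\max(d_1,d_2)$ exponent, so one must engineer a matrix weight in which orthogonal eigendirections realise the two dimensions in different regions of $\mathbb R^n$, while simultaneously verifying that the global $A_{p,\infty}$ characteristic and both $A_{p,\infty}$-dimensions of the construction come out as prescribed.
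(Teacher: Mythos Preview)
Your claim that ``scalar weights demonstrably cannot reach beyond the $\max(d_1,d_2)$ exponent'' is incorrect, and this misconception drives you toward an unnecessarily complicated and unverified matrix construction. The paper handles all four parts with a \emph{scalar} weight. For part (iv) it sets $x_0:=(1,0,\dots,0)$ and
\[
w(x):=\widetilde w_{-d_1,1}(x)\,[\widetilde w_{-d_2,1}(x-x_0)]^{-1}
\sim|x|^{-d_1}|x-x_0|^{d_2}\quad(\text{up to logarithms}),
\]
a product of two $A_1$ weights with singularities at \emph{distinct} points, so $w\in A_2\subset A_\infty$ by Jones factorization. Near $0$ the factor $|x|^{-d_1}$ produces lower dimension $d_1$, while near $x_0$ the factor $|x-x_0|^{d_2}$ produces upper dimension $d_2$; the log corrections prevent both from being attained. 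To force $c>d_1+d_2$ one takes $Q$ centered at $0$ and $R$ centered at $x_0$, both of side $r\to 0$: then $\fint_Q w\sim r^{-d_1}$ and $\fint_R w\sim r^{d_2}$ (each cube sees only one singularity), so $\|A_QA_R^{-1}\|^p\sim r^{-d_1-d_2}$ while the right side of \eqref{2.26x} is $\sim r^{-c}$, and the log factors make the inequality strict.

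Your error stems from testing only single-singularity weights like $|x|^a$, for which indeed one of $d_1,d_2$ vanishes and $\max(d_1,d_2)=d_1+d_2$. You then consider only the regime ``$t\to\infty$ with $r$ fixed'' rather than ``$r\to 0$ with $t$ fixed''; the latter, combined with two separated singularities, is what makes the scalar construction work. Your proposed position-dependent unitary $W(x)=U^*(x)\mathrm{diag}(w_1,w_2)U(x)$ is not needed, and you have not checked that such a $W$ is even in $A_{p,\infty}$ or has the prescribed dimensions---in general, variable $U(x)$ can destroy the $A_{p,\infty}$ property. Replace the matrix construction by the two-point scalar weight above; the remaining details then follow the pattern of \cite[Lemma~2.47]{bhyy}.
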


\begin{proof}
Due to similarity, we only show (iv).
Let $x_0:=(1,0,\ldots,0)$ and matrix weight $W:=wI_m$,
where $I_m$ is identity matrix and, for any $x\in\mathbb R^n$,
$$
w(x):=w_1(x)[w_2(x)]^{-1}:=\widetilde w_{-d_1,1}(x)[\widetilde w_{-d_2,1}(x-x_0)]^{-1}.
$$
Now, we prove that $W$ satisfies all assumptions of (iv).
We first show that $d_\infty^{\mathrm{lower}}(w)=d_1$ and
$w\notin\mathbb D_{\infty,d_1}^{\mathrm{lower}}(\mathbb R^n)$.
To this end, from Proposition \ref{2.34}(i), it follows that
we need only to prove that
\begin{enumerate}[\rm(a)]
\item for any $d\in(d_1,\infty)$, there exists a positive constant $C$ such that,
for any $\lambda\in[1,\infty)$ and any cube $Q\subset\mathbb R^n$,
\begin{align}\label{lower 3}
\frac{w(Q)}{w(\lambda Q)}\leq C\lambda^{d-n};
\end{align}
\item for $d=d_1$, \eqref{lower 3} does not hold.
\end{enumerate}

Next, we show (a). Let $d\in(d_1,\infty)$.
By \cite[Lemma 2.3(v)]{fs97}, Lemma \ref{exa}(i),
and Proposition \ref{2.34}(i), we conclude that, for any $i\in\{1,2\}$,
$w_i\in A_1(\mathbb R^n)$ and,
for any $\lambda\in[1,\infty)$ and any cube $Q\subset\mathbb R^n$,
\begin{align}\label{wi}
\fint_Qw_1(x)\,dx\left\|w_1^{-1}\right\|_{L^\infty(\lambda Q)}
&=\frac{|\lambda Q|}{|Q|}\frac{w_1(Q)}{w_1(\lambda Q)}
\fint_{\lambda Q}w_1(x)\,dx\left\|w_1^{-1}\right\|_{L^\infty(\lambda Q)}\\
&\leq\lambda^n\frac{w_1(Q)}{w_1(\lambda Q)}[w_1]_{A_1(\mathbb R^n)}
\lesssim\lambda^d.\notag
\end{align}
From $w_1,w_2\in A_1(\mathbb R^n)$ and the Jones factorization theorem,
we infer that $w\in A_2(\mathbb R^n)\subset A_\infty(\mathbb R^n)$.
This, combined with Lemma \ref{w vs W}, further implies that $W\in A_{p,\infty}$.
By $w\in A_2(\mathbb R^n)$, \eqref{AQ inv}, \eqref{wi},
and the definition of $[w_2]_{A_1(\mathbb R^n)}$, we find that,
for any $\lambda\in[1,\infty)$ and any cube $Q\subset\mathbb R^n$,
\begin{align*}
\frac{w(Q)}{w(\lambda Q)}
&\sim\lambda^{-n}\fint_Qw(x)\,dx\fint_{\lambda Q}[w(x)]^{-1}\,dx\\
&\leq\lambda^{-n}\fint_Qw_1(x)\,dx\left\|w_1^{-1}\right\|_{L^\infty(\lambda Q)}
\fint_{\lambda Q}w_2(x)\,dx\left\|w_2^{-1}\right\|_{L^\infty(Q)}\\
&\lesssim\lambda^{d-n}[w_2]_{A_1(\mathbb R^n)}.
\end{align*}
This finishes the proof of (a).

Now, we show (b).
By Corollary \ref{135x}, we obtain,
for any cube $Q\subset\mathbb R^n$ with $c_Q=\mathbf{0}$ and $\ell(Q)<\frac12$,
\begin{align*}
w(Q)
\sim|Q|\fint_Qw_1(x)\,dx
\sim[\ell(Q)]^{n-d_1}\log[2+\ell(Q)^{-1}].
\end{align*}
Thus, for any $\lambda\in(0,1)$ and
any cube $Q\subset\mathbb R^n$ with $c_Q=\mathbf{0}$ and $\ell(Q)=\frac13$,
\begin{align*}
\frac{w(\lambda Q)}{w(Q)}
\sim\lambda^{n-d_1}\frac{\log(2+3\lambda^{-1})}{\log5},
\end{align*}
where
\begin{align*}
\lim_{\lambda\to0^+}\frac{\log(2+3\lambda^{-1})}{\log5}=\infty
\end{align*}
and hence \eqref{lower 3} does not hold if $d=d_1$.
This finishes the proof of (b) and hence $d_\infty^{\mathrm{lower}}(w)=d_1$ and
$w\notin\mathbb D_{\infty,d_1}^{\mathrm{lower}}(\mathbb R^n)$.
From these and Lemma \ref{w vs W 2},
it follows that $d_{p,\infty}^{\mathrm{lower}}(W)=d_1$ and
$W\notin\mathbb D_{p,\infty,d_1}^{\mathrm{lower}}$.
Applying an argument similar to that used in the proofs of both
$d_{p,\infty}^{\mathrm{lower}}(W)=d_1$ and
$W\notin\mathbb D_{p,\infty,d_1}^{\mathrm{lower}}$, we obtain $d_{p,\infty}^{\mathrm{upper}}(W)=d_2$ and
$W\notin\mathbb D_{p,\infty,d_2}^{\mathrm{upper}}$.
Therefore, $W$ satisfies all assumptions of (iv).

Applying an argument similar to that used in the proof of \cite[Lemma 2.47]{bhyy},
we obtain \eqref{146x equ4}.
This finishes the proof of Lemma \ref{146x}.
\end{proof}



\newpage

\noindent Fan Bu

\medskip

\noindent Laboratory of Mathematics and Complex Systems (Ministry of Education of China),
School of Mathematical Sciences, Beijing Normal University, Beijing 100875, The People's Republic of China

\smallskip

\noindent{\it E-mail:} \texttt{fanbu@mail.bnu.edu.cn}

\bigskip

\noindent Tuomas Hyt\"onen (Corresponding author)

\medskip

\noindent Department of Mathematics and Statistics,
University of Helsinki, (Pietari Kalmin katu 5), P.O.
Box 68, 00014 Helsinki, Finland

\smallskip

\noindent{\it E-mail:} \texttt{tuomas.hytonen@helsinki.fi}

\bigskip

\noindent Dachun Yang and Wen Yuan

\medskip

\noindent Laboratory of Mathematics and Complex Systems (Ministry of Education of China),
School of Mathematical Sciences, Beijing Normal University, Beijing 100875, The People's Republic of China

\smallskip

\noindent{\it E-mails:} \texttt{dcyang@bnu.edu.cn} (D. Yang)

\noindent\phantom{{\it E-mails:} }\texttt{wenyuan@bnu.edu.cn} (W. Yuan)

\end{document}